%% LyX 2.3.2 created this file.  For more info, see http://www.lyx.org/.
%% Do not edit unless you really know what you are doing.
\documentclass[english]{article}
\usepackage[T1]{fontenc}
\usepackage[latin9]{inputenc}
\usepackage{amsmath}
\usepackage{amsthm}
\usepackage{amssymb}
\usepackage{cancel}
\usepackage{stmaryrd}
\usepackage{graphicx}
\usepackage{wasysym}

\makeatletter

%%%%%%%%%%%%%%%%%%%%%%%%%%%%%% LyX specific LaTeX commands.
%% Special footnote code from the package 'stblftnt.sty'
%% Author: Robin Fairbairns -- Last revised Dec 13 1996
\let\SF@@footnote\footnote
\def\footnote{\ifx\protect\@typeset@protect
    \expandafter\SF@@footnote
  \else
    \expandafter\SF@gobble@opt
  \fi
}
\expandafter\def\csname SF@gobble@opt \endcsname{\@ifnextchar[%]
  \SF@gobble@twobracket
  \@gobble
}
\edef\SF@gobble@opt{\noexpand\protect
  \expandafter\noexpand\csname SF@gobble@opt \endcsname}
\def\SF@gobble@twobracket[#1]#2{}
%% Because html converters don't know tabularnewline
\providecommand{\tabularnewline}{\\}

%%%%%%%%%%%%%%%%%%%%%%%%%%%%%% Textclass specific LaTeX commands.
\theoremstyle{plain}
\newtheorem{thm}{\protect\theoremname}
\theoremstyle{plain}
\newtheorem{prop}[thm]{\protect\propositionname}
\ifx\proof\undefined
\newenvironment{proof}[1][\protect\proofname]{\par
	\normalfont\topsep6\p@\@plus6\p@\relax
	\trivlist
	\itemindent\parindent
	\item[\hskip\labelsep\scshape #1]\ignorespaces
}{%
	\endtrivlist\@endpefalse
}
\providecommand{\proofname}{Proof}
\fi
\newenvironment{lyxlist}[1]
	{\begin{list}{}
		{\settowidth{\labelwidth}{#1}
		 \setlength{\leftmargin}{\labelwidth}
		 \addtolength{\leftmargin}{\labelsep}
		 }}
	{\end{list}}

\makeatother

\usepackage{babel}
\providecommand{\propositionname}{Proposition}
\providecommand{\theoremname}{Theorem}

\begin{document}
\title{\date{}General approach to function approximation}
\author{Andrej Liptaj\thanks{andrej.liptaj@savba.sk, ORC iD 0000-0001-5898-6608}\\
{\small{}Institute of Physics, Bratislava, Slovak Academy of Sciences}}
\maketitle
\begin{abstract}
Having a function $f$ and a set of functionals $\{\mathcal{C}_{n}\}$,
$c_{n}^{f}\equiv\mathcal{C}_{n}\left(f\right)$, one can interpret
function approximation very generally as a construction of some function
$\mathcal{A}^{f}$ such that $c_{n}^{f}=\mathcal{C}_{n}\left(\mathcal{A}^{f}\right)$.
All known approximations can be interpreted in this way and we review
some of them. In addition, we construct several new expansion types
including three rational approximations.
\end{abstract}
Keywords: functional action, characteristic number matching, rational
approximation.

\noindent MSC classification: 41-02, 41A58, 41A20.

\section{Introduction}

Applications of function approximations are countless: evaluation
of functions on computers, transformation of functions into a form
suited for further processing (integration, differentiation), frequency
analysis, (approximate) solutions of differential equations, etc.
A natural point of view which is common to all approximation methods
is that by an approximation of a function $f$ one can understand
another function $\mathcal{A}^{f}$ which meets a number of constraints
(conditions) true for $f$. $\mathcal{A}^{f}$ is usually constructed
such as to keep some well defined function form and the constraints
are met by adjusting coefficients appearing within this form. The
number of constraints can be finite or infinite, and in most cases
the degree of the approximation can be increased by increasing the
number of conditions which are obeyed. For an exact approximation
$\mathcal{A}^{f}=f$ to be achieved, one in general expects an infinite
number of fulfilled constraints as a necessary (but often not sufficient)
condition. This can be formalized in a framework where requirements
are represented by numbers resulting from the action of some predefined
set of functionals on $f$. Then the construction of an approximation
corresponds to tuning the above-mentioned coefficients, such as to
reproduce these numbers by $\mathcal{A}^{f}$. As an example we can
mention the Taylor series which are based on matching the derivatives
by power series, or the Fourier expansion which can be seen as matching
the integrals where the integrand is $f\left(x\right)$ multiplied
by $\sin\left(nx\right)$ or $\cos\left(nx\right)$.

These ideas are not new, similar ideas have been presented in \cite{Widder1928,davis1975interpolation,Iran1_2008JCoAM.216..307M,Iran2_article,Iran3_article}.
To justify our work we bring forward two points to make a distinction.
\begin{itemize}
\item First point is conceptual: all previous works are based on linear
functionals\footnote{We have to honestly acknowledge that a possibility of non-linear functionals
is indirectly mentioned in \cite{davis1975interpolation} where in
Sec. 1.12 the author says: ``Interpolation theory is concerned with
reconstructing functions on the basis of certain functional information
assumed known. In many cases, the functionals are linear.''}. The idea of linearity is indeed attractive when practical considerations
are taken into account, but is not necessary to define the concept
of approximation. Functionals need to constrain the function in the
first place, not to act linearly. We thus extend the definition of
approximation in this direction and provide a simple example of this
(i.e. non-linear) type.
\item The second point is practical: we construct several new expansions
not present in other works.
\end{itemize}
The introduced notion of an approximation does not imply the convergence
(of any type) of \emph{$\mathcal{A}^{f}$} to $f$. In most of this
text we focus only on the approximation properties and not on the
convergence ones. We do this because (inferring from the existing
approximations\footnote{The Padé approximants come to the author's mind.})
the convergence questions are usually technically complicated and
their study (weakness/strength, sufficiency/necessity, bounding strategies,
etc.) often represents a large topic which would exceed the intended
extent of this text, where many different approximation types are
mentioned.

Also, our text does not rely on proving a single main idea or theorem.
Going through various function approximations a couple of proofs are
presented labeled as propositions.

In Sec. \ref{sec:General-approach-to} we introduce the notion of
an approximation more formally and discuss some basic properties.
The section which follows then reviews some of the existing examples
and shows how to interpret them in the general approximation approach.
We present new function expansions in Sec. \ref{sec:newExamples}
and close the text by summary, conclusion and outlook.

\section{General approach to function approximation\label{sec:General-approach-to}}

\subsection{Definitions}

Let $f\left(x\right)$ be a real-valued function defined on some (open
or closed) interval $\left(a,b\right)\subset\overline{\mathbb{R}}$
and let $\left\{ \mathcal{C}_{n}\right\} _{n=0}^{\infty}$ be a sequence
of real functionals acting on $f$ 
\[
\mathcal{\mathcal{C}}_{n}:f_{\left(a,b\right)}\left(x\right)\rightarrow\mathbb{R}.
\]
By \emph{characteristic numbers} of the function $f$ we will understand
the sequence of real numbers $\left\{ c_{n}^{f}\right\} _{n=0}^{\infty}$
defined by 
\[
c_{n}^{f}\equiv\mathcal{C}_{n}\left(f\right).
\]
By a \emph{partial approximation} of the function $f$ of the order
$N$ we will understand any function $\mathcal{A}_{N}^{f}\left(x\right)$
for which the action of $\mathcal{\mathcal{C}}_{n}$ is defined for
all $n\leq N$ having the property
\[
\mathcal{\mathcal{C}}_{n}\left(\mathcal{A}_{N}^{f}\right)=c_{n}^{f}\text{ for all }n\leq N.
\]
A function $\mathcal{A}^{f}\left(x\right)\equiv\mathcal{A}_{\infty}^{f}\left(x\right)$
will be called the \emph{approximation} of the function $f$.

\subsection{Properties of functionals and approximations}

Intuitively, some basic properties of functionals are expected. We
call the functional $\mathcal{\mathcal{C}}_{i}\,\in\,\mathcal{C}\equiv\left\{ \mathcal{C}_{n}\right\} _{n=0}^{\infty}$
dependent on $\mathcal{C}\backslash\mathcal{\mathcal{C}}_{i}$ if
for any $f$ (for which the functionals are well defined) $c_{i}^{f}$
can be expressed as a function of the remaining characteristic numbers
\[
c_{i}^{f}=c_{i}^{f}\left(c_{0}^{f},\ldots,c_{i-1}^{f},c_{i+1}^{f},\ldots\right).
\]
For the set $\mathcal{C}$ to be rich enough to permit an approximation
of an interestingly large family of functions, one expects it to contain
an infinite number of independent functionals. For aesthetic reasons
one may prefer all functionals in $\mathcal{C}$ to be independent.

Also, in situations where the functional action needs to be distributed
over infinite sums, the functionals are assumed to be continuous.
Yet, because our approach is very general, we prefer to \emph{assume}
that the various operations we perform are valid, i.e. that the appearing
objects have the necessary properties (whatever these are), rather
then specifying conditions for them.

\subsubsection{Linearity}

Most of the actually used approximations are based on linear functionals,
i.e. one has 
\[
\mathcal{C}_{n}\left(f+\alpha g\right)=\mathcal{C}_{n}\left(f\right)+\alpha\mathcal{C}_{n}\left(g\right)\text{ for all }f,g\text{ and }n\,\in\,\mathbb{N}_{0},\,\alpha\,\in\,\mathbb{R}.
\]
Such approximations are very appealing if a set of functions $\left\{ \Delta_{n}\right\} _{n=0}^{\infty}$
with the delta property
\begin{equation}
\mathcal{C}_{n}\left(\Delta_{m}\right)=\delta_{n,m}\label{eq:deltaFnDef}
\end{equation}
exists. When so, an approximation of $f$ can be easily constructed
in the form of an infinite sum
\begin{equation}
\mathcal{A}^{f}\left(x\right)=\sum_{m=0}^{\infty}c_{m}^{f}\Delta_{m}\left(x\right),\label{eq:LinApprox}
\end{equation}
where the existence of the limit is assumed. Assuming further that
the action of $\mathcal{\mathcal{C}}_{i}$ can be distributed over
this infinite sum, one observes that the expression (\ref{eq:LinApprox})
indeed reproduces the characteristic numbers of $f$
\[
\mathcal{C}_{n}\left(\mathcal{A}^{f}\right)=\sum_{m=0}^{\infty}c_{m}^{f}\mathcal{C}_{n}\left(\Delta_{m}\right)=\sum_{m=0}^{\infty}c_{m}^{f}\delta_{n,m}=c_{n}^{f}.
\]
In this text we will refer to $\Delta_{n}$ using the term ``delta
function''.

Another interesting scenario, which also appears in practice, is represented
by the set of functions $\left\{ \nabla_{n}\right\} _{n=0}^{\infty}$
with the property
\begin{equation}
c_{n}^{\nabla_{m}}\equiv\mathcal{C}_{n}\left(\nabla_{m}\right)=0\text{ for all }n<m.\label{eq:triangularPropertyDef}
\end{equation}
Also in this case one can propose to build an approximation as a series
with multiplicative coefficients (supposing it is well defined)
\begin{equation}
\mathcal{A}^{f}\left(x\right)=\sum_{m=0}^{\infty}t_{m}\nabla_{m}\left(x\right),\label{eq:TriangApproach}
\end{equation}
where the question of $t_{m}$ values arises. Assuming the distribution
of the functional action is justified, one has
\[
\mathcal{C}_{n}\left(\mathcal{A}^{f}\right)=\sum_{m=0}^{\infty}t_{m}\mathcal{C}_{n}\left(\nabla_{m}\right)=\sum_{m=0}^{n}t_{m}c_{n}^{\nabla_{m}}\overset{!}{=}c_{n}^{f},
\]
where the notation $\overset{!}{=}$ reads ``should be equal to''.
The relation between $t_{m}$ and $c_{n}^{f}$ is therefore represented
by an infinite triangular matrix 
\[
\boldsymbol{c^{f}}=\boldsymbol{Tt},\quad\boldsymbol{T}=\left(\begin{array}{cccc}
c_{0}^{\nabla_{0}} & 0 & 0 & \ldots\\
c_{1}^{\nabla_{0}} & c_{1}^{\nabla_{1}} & 0 & \ldots\\
c_{2}^{\nabla_{0}} & c_{2}^{\nabla_{1}} & c_{2}^{\nabla_{2}} & \ldots\\
\vdots & \vdots & \vdots & \ddots
\end{array}\right),
\]
which can be easily inverted in practice (and sometimes also formally)
$\boldsymbol{t}=\left(\boldsymbol{T}\right)^{-1}\boldsymbol{c^{f}}$,
thus allowing for an easy-to-achieve approximation. We will refer
to $\nabla_{n}$ using the term ``triangular function''.

An additional issue, which can be addressed when series of delta functions
are used, is the behavior of the latter with the number of matched
constraints increasing to infinity. Here we naturally extend the definition
of a delta function to partial approximations by
\begin{equation}
\mathcal{C}_{n}\left(\Delta_{m}^{[N]}\right)=\delta_{n,m}\text{ for }n,m\leq N,\label{eq:partialDeltaFns}
\end{equation}
where $[N]$ indicates the approximation order. If functions $\Delta_{m}$
exist then they also represent partial delta functions, i.e. $\Delta_{m}^{[N]}$
exist too. However, since $\Delta_{m}^{[N]}$ functions need to satisfy
only a finite number of conditions (i.e. conditions for their existence
are weaker) one can observe situations
\begin{itemize}
\item where $\Delta_{m}^{[N]}$ exist, but $\Delta_{m}$ do not, or
\item where, for fixed $N$, several realizations of $\Delta_{m}^{[N]}$
exist, with $\Delta_{m}$ being one of them.
\end{itemize}
Similarly one can define partial triangular functions by
\[
\mathcal{C}_{n}\left(\nabla_{m}^{[N]}\right)=0\text{ for all }n<m\leq N
\]
and address analogous questions.

\subsubsection{Construction of approximations}

One can think of multiple ways of how an approximation can be built.
It is natural to expect from its function form to have some generality,
i.e. to be suitable for approximating various functions, while keeping
its form. Thus one intuitively comes to the following ideas:
\begin{itemize}
\item The form of an (partial) approximation has some well defined logical
structure, which remains the same for all approximated functions.
\item The (partial) approximation is achieved by varying coefficients $a_{i}$
appearing within this fixed form.
\end{itemize}
Here, once more, some general properties are expected, such as the
number of coefficients to adjust. If a partial approximation $\mathcal{A}_{N}^{f}$
is supposed to reproduce $N+1$ characteristic numbers, it is reasonable
to assume this can be achieved by introducing $N+1$ coefficients
$\mathcal{A}_{N}^{f}=\mathcal{A}_{N}^{f}\left(a_{0},\ldots,a_{N}\right)$.
Of course, the latter cannot be guaranteed, however, the approximations
chosen by humans always have this property. One also expects that
all coefficients play a role (variation of each coefficient has some
impact on $c_{n}^{\mathcal{A}^{f}}$ values), and their actions are
independent (variation of some coefficient cannot be replaced by variations
of other coefficients).

We will now adopt this natural choice and work with the scenario where
the number of coefficients for $\mathcal{A}_{N}^{f}$ is $N+1$
\begin{align*}
\mathcal{A}_{0}^{f} & =\mathcal{A}_{0}^{f}\left(a_{0}^{[0]}\right),\\
\mathcal{A}_{1}^{f} & =\mathcal{A}_{1}^{f}\left(a_{0}^{[1]},a_{1}^{[1]}\right),\\
\mathcal{A}_{2}^{f} & =\mathcal{A}_{2}^{f}\left(a_{0}^{[2]},a_{1}^{[2]},a_{2}^{[2]}\right),\\
 & \vdots\quad.
\end{align*}
Here we introduce the notation $a_{i}^{[N]}$ meaning that the coefficient
is one of those appearing in the formula for $\mathcal{A}_{N}^{f}$.

An important property related to the technical complexity of an approximation
is the persistence of coefficient values across different approximation
orders, since it is very convenient to keep the coefficient value
once computed at some order of approximation also for the next ones,
$a_{0}^{[0]}=a_{0}^{[1]}=a_{0}^{[2]}=\ldots=a_{0}$. Such a property
can be observed in most approximations used in practice and as one
of rare counterexamples one can mention the Padé approximant, where,
for each approximation order, a new set of coefficients has to be
determined. We will label the approximations with the coefficient
persistence

\begin{align*}
\mathcal{A}_{0}^{f} & =\mathcal{A}_{0}^{f}\left(a_{0}\right),\\
\mathcal{A}_{1}^{f} & =\mathcal{A}_{1}^{f}\left(a_{0},a_{1}\right),\\
\mathcal{A}_{2}^{f} & =\mathcal{A}_{2}^{f}\left(a_{0},a_{1},a_{2}\right),\\
 & \vdots\quad.
\end{align*}

as triangular, since they are naturally (but not necessarily) realized
in the linear-functional approach by combining triangular functions
(formula (\ref{eq:TriangApproach})). As a subset of these, the most
elegant approximations are those where the coefficients are in the
one-to-one correspondence with characteristic numbers, i.e. there
is a single, particular coefficient to be tuned to match a given characteristic
number without interfering with others
\[
\mathcal{C}_{n}\left(\mathcal{A}^{f}\right)\equiv c_{n}^{\mathcal{A}^{f}}\left(a_{n}\right)\overset{!}{=}c_{n}^{f}.
\]
We will call such approximations as delta approximations. They are
in practice often realized in the linear-functional framework as linear
combination of delta functions (formula (\ref{eq:LinApprox})), they
are, however, not restricted to this scenario (as presented later).

To achieve the characteristic number matching in a general case, one
usually has to solve a system of $N+1$ (in general non-linear) equations
for coefficients $a_{0}^{[N]},\ldots,a_{N}^{[N]}$ 
\begin{align}
\mathcal{\mathcal{C}}_{0}\left(\mathcal{A}_{N}^{f}\left(a_{0}^{[N]},\ldots,a_{N}^{[N]}\right)\right) & =c_{0}^{f},\nonumber \\
 & \vdots\label{eq:solvingGenetalEquations}\\
\mathcal{\mathcal{C}}_{N}\left(\mathcal{A}_{N}^{f}\left(a_{0}^{[N]},\ldots,a_{N}^{[N]}\right)\right) & =c_{N}^{f}.\nonumber 
\end{align}
For triangular approximations one typically solves $N+1$ single equations
in a progressive way
\begin{equation}
\mathcal{\mathcal{C}}_{N}\left(\mathcal{A}_{N}^{f}\left(a_{N}\right)\right)=c_{N}^{f},\label{eq:progressiveSolution}
\end{equation}
where the values of $a_{0},\ldots,a_{N-1}$ are known from solving
similar equations in previous steps. For delta approximations the
solution needs not to be progressive since equations (\ref{eq:progressiveSolution})
become independent. Furthermore, in the latter scenario, the equations
have often a similar structure and can be all formally solved in a
single step.

\section{Existing examples}

The overview which follows presents some of the existing approximation
approaches in the light of the scheme developed in the previous section.
For each method we try to provide a structured entry where its basic
features are summarized.

\subsection{Derivative matching}

The derivative-matching approximations are based on linear functionals
\[
\mathcal{C}_{n}=\frac{d^{n}}{dx^{n}}|_{x=x_{0}},\quad c_{n}^{f}=\frac{d^{n}f\left(x\right)}{dx^{n}}|_{x=x_{0}},
\]
where the point of the differentiation $x_{0}$ needs to be specified.
In the four examples which follow it is natural to build the approximation
around $x_{0}=0$. For a different point $x_{0}\neq0$ a shifted version
of the approximation is used $\mathcal{A}^{f}=\mathcal{A}^{f}\left(x-x_{0}\right)$,
so that the argument becomes zero for $x=x_{0}$. Thus, for the simplicity
and without loss of generality, we choose in the four examples $x_{0}=0$;
for any other value the expansion can be shifted by shifting the argument.

\subsubsection*{Taylor series}

The Taylor series $\mathcal{A}^{f,T}$ is a delta approximation based
on delta functions
\[
\mathcal{A}^{f,T}\left(x\right)=\sum_{n=0}^{\infty}a_{n}\Delta_{n},\quad\Delta_{n}=\frac{1}{n!}x^{n}
\]
with coefficients
\[
a_{n}=c_{n}^{f}.
\]
A large body of literature covers the theory related to the Taylor
series and one might consult it for information about the convergence
behavior and the related criteria. The function $f$ having a converging
approximation $\mathcal{A}^{f,T}\rightarrow f$ in a non-zero neighborhood
of $x_{0}$ is called analytic at $x_{0}$. The notion of analyticity
plays an important role in mathematics, especially when function arguments
are extended to complex numbers. The Taylor expansions are very popular
because they can be easily manipulated (integrated, differentiated,
computed) and they are especially useful when small perturbations
(of any kind) are studied.

The approximation needs not to converge for a real function even if
all characteristic numbers (derivatives) exist and are matched, there
are well known examples of non-analytic smooth functions. This is
different in the complex analysis, where the existence of a continuous
derivative implies the analyticity.

\subsubsection*{Neumann series of Bessel functions}

The Neumann series of Bessel functions (NsBf) $\mathcal{A}^{f,N}$\cite{NeumannBook,WatsonBook}
is a triangular approximation based on triangular functions
\[
\mathcal{A}^{f,N}\left(x\right)=\sum_{n=0}^{\infty}a_{n}\nabla_{n},\quad\nabla_{n}=J_{n}(x),
\]
where $J_{n}$ are the Bessel functions of the first kind. The coefficients
are given by
\[
a_{n}=\begin{cases}
c_{0}^{f} & \text{for }n=0\\
\sum_{i=0}^{2i<n}2^{n-2i}\left[\binom{n-i-1}{n-2i-1}+2\binom{n-i-1}{n-2i}\right]c_{n-2i}^{f} & \text{for }n>0
\end{cases},
\]
with brackets denoting the binomial coefficients. The convergence
properties are known to be the same as for the Taylor series with
identical characteristic numbers (\cite{WatsonBook}, 16.2, ``Pincherle's
theorem''), thus the applicability domain of this expansion corresponds
to analytic functions.

The NsBf are seen more rarely, they play a role when studying Bessel's
differential equation or some similar equations, such as presented
in \cite{usedBessel1,usedBessel2}. The numerical test (Fig. (\ref{pic:BessCos}))
also suggests the NsBf significantly over-perform the Taylor series
when periodic functions are approximated.
\begin{figure}
\begin{centering}
\includegraphics[width=0.6\linewidth]{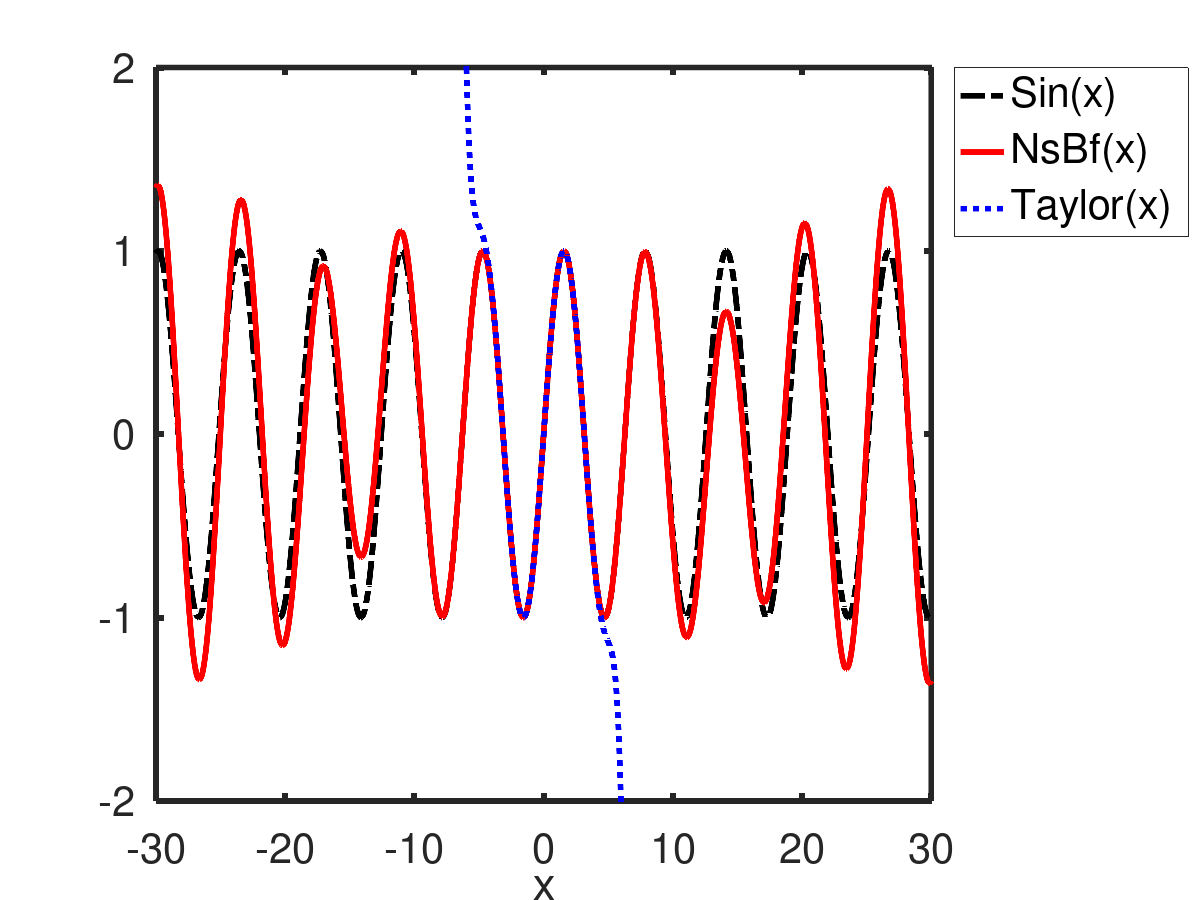}
\par\end{centering}
\caption{Approximation of $y=\sin\left(x\right)$ by the NsBf and by the Taylor
series with the value of the function and values of the first ten
derivatives matched.}
\label{pic:BessCos}
\end{figure}

\subsubsection*{Approximation of Padé}

The Padé partial approximation by a rational function is a rather
well-known example which does not fall into any category of Sec. \ref{sec:General-approach-to}.
The approximation $\mathcal{A}_{N}^{f,P}$ is build as a ratio of
two polynomials
\[
\mathcal{A}_{m+n+1}^{f,P}(x)=\frac{P_{m}(x)}{P_{n}(x)},\:P_{m}(x)=\sum_{i=0}^{m}a_{1,i}^{[m+n+1]}x^{i},\:P_{n}(x)=1+\sum_{j=1}^{n}a_{2,j}^{\left[m+n+1\right]}x^{j},
\]
where the absolute term of $P_{n}$ is (by definition) one. The approximation
is, to the author's knowledge, always used as partial, which may be
understood as the consequence of its complicated character (non-persistent
coefficients). The coefficients $a_{1,i}^{[m+n+1]}$, $a_{2,i}^{[m+n+1]}$
can be determined in a ``brute-force'' manner by differentiating
$\mathcal{A}_{m+n+1}^{f,P}$ and solving the resulting equations,
even though more efficient methods are on the market, see \cite{baker1996pade}.
The latter reference corresponds to one of the large number of texts
which cover Padé approximation and where further information (e.g.
about the convergence issues) can be found.

The Padé approximants are known to have good convergence properties,
in some situations the domain of the convergence extends beyond that
of the Taylor series. Often, the Padé approximation converges more
rapidly than the Taylor series which explains its popularity in numerical
computations \cite{padeNum}. These appealing features are attributed
to the fact that the form of rational functions enables the approximation
to mimic the poles (and branch cuts \cite{padeBranch}) of the approximated
functions.

We discuss further this topic also in Sections \ref{subsec:Power-series-of}
and \ref{subsec:g(x^n)} where we introduce presumably new ways of
constructing a rational approximation.

\subsubsection*{Powers of sines\label{subsec:Powers-of-sines}}

The derivative-matching approach can be applied also to trigonometric
polynomials \cite{Butzer}. One has
\begin{equation}
\mathcal{A}_{n}^{f,S}(x)=a_{0}+\sum_{n=1}^{\infty}a_{n}\left[\sin\left(\frac{x}{2}\right)\right]^{n}\label{eq:powersOfSine}
\end{equation}
with
\[
a_{0}=c_{0}^{f},\quad a_{n}=\frac{2^{n}}{n!}\sum_{k=1}^{n}c_{k}^{f}\left|t(n,k)\right|
\]
where $t(n,k)$ are the central factorial numbers of the first kind
defined by the equation\footnote{A closed formula is not available \cite{tjant2019725}.}
\[
x\left(x+\frac{n}{2}-1\right)\left(x+\frac{n}{2}-2\right)\ldots\left(x-\frac{n}{2}+1\right)=\sum_{k=0}^{n}t\left(n,k\right)x^{k},\quad n\epsilon\mathbb{N}.
\]
The expansion is certainly interesting, the use of trigonometric power
formulas allows one to transform (\ref{eq:powersOfSine}) into a formal
Fourier series on $\left(-2\pi,2\pi\right)$, yet not constructed
by scalar products but upon ``Taylor's'' principles. The authors
of \cite{Butzer} study in details the convergence properties of this
series and also claim that such an expansion may be useful, e.g. in
the theory of trigonometric asymptotic expansions. The approximation
(\ref{eq:powersOfSine}) is triangular and represents a special case
of a larger group of approximations which we address in Sec. \ref{subsec:Power-series-of}.

\subsection{Matching re-weighted\protect\footnote{The expression ``re-weigh function'' we use is chosen to make a
distinction from ``weight function'', which could one interpret
as having a unit integral $\int_{a}^{b}dx\,v_{n}\left(x\right)=1$
which we do \emph{not} ask for.} integrals}

A large family of approximations is based on integrals where the integrand
is the product of the approximated function and a function which belongs
to some predefined (usually infinite) function set
\begin{equation}
\mathcal{C}_{n}=\int_{a}^{b}dx\,\left[v_{n}\left(x\right)\times\ldots\right]\quad c_{n}^{f}=\int_{a}^{b}v_{n}\left(x\right)f\left(x\right)\,dx,\quad v_{n}\epsilon V\equiv\left\{ v_{i}\right\} _{i=0}^{\infty}.\label{eq:rewIntegralsDef}
\end{equation}
Most of these approximations are usually formally interpreted in terms
of a vector space of functions where the integral defines the scalar
product and $V$ represents a complete orthonormal basis. There are
exceptions, e.g. such an interpretation is not adopted for the moment
problem, yet the (raw) moments are also computed as re-weighted function
integrals.

The vector-space based approximations are numerous, extend to more
dimensions (e.g. the spherical harmonics $Y_{l}^{m}\left(\theta,\varphi\right)$)
with many different basis, thus the following examples should be understood
as basic illustrative examples for this group of approximations. The
orthonormality of the basis corresponds to the delta property of the
approximation.

\subsubsection*{Fourier series and generalized Fourier series}

Let us in parallel present the two most common ``vector-space''
approximations, the Fourier series ($\mathcal{A}^{f,F}$) and the
Legendre-Fourier series ($\mathcal{A}^{f,L}$) defined on their usual
intervals $\left(-\pi,\pi\right)$ and $\left(-1,1\right)$ respectively.
One has
\[
\mathcal{A}^{f,F}\left(x\right)=\sum_{n=0}^{\infty}a_{n}v_{n},\quad v_{n}=\Delta_{n}=\begin{cases}
\frac{1}{\sqrt{2}} & \text{for }n=0\\
\sin\left(\frac{n+1}{2}x\right) & \text{for }n=1,3,5,\ldots\\
\cos\left(\frac{n}{2}x\right) & \text{for }n=2,4,6,\ldots
\end{cases},
\]
\[
\mathcal{A}^{f,L}\left(x\right)=\sum_{n=0}^{\infty}a_{n}v_{n},\quad v_{n}=\Delta_{n}=\sqrt{\frac{2}{2n+1}}P_{n},
\]
where $P_{n}$ are the Legendre polynomials. Both approximations are
delta approximations based on linear functionals and delta functions.
The relation to characteristic numbers stands
\begin{equation}
a_{n}=c_{n}^{f}.\label{eq:momentProbDef}
\end{equation}
This approach generalizes to other expansions, such as Fourier--Bessel
series, series of spherical harmonics (Laplace series), Schlömilch's
series, etc. Since $\left\{ v_{i}\right\} _{i=0}^{\infty}$ represents
a complete basis of a normed vector space, the series convergence
in the norm ($L^{p}$) implies an almost everywhere pointwise convergence.

The Fourier series and their generalizations are very useful in many
different areas of mathematics and physics, among the most important
are the frequency analysis or solution techniques for differential
equations. The approximation by Fourier series is limited to periodic
functions.

\subsubsection*{Moments}

The raw moments are defined by 
\[
v_{n}=x^{n}.
\]
We conjecture that the delta functions cannot be constructed and prove
it for stronger assumptions hereunder. The construction of the approximation
$\mathcal{A}^{f,M}$ from the characteristic numbers (i.e. the raw
moments) is a well-known moment problem \cite{alma991004750549703276}.
The approach we present is however more general (than the classical
moment problem), since, in the latter, one usually assumes $f$ to
be positive\footnote{Often interpreted as a measure, a probability function, or a mass
distribution.}, which we do not do here. Depending on the interval, the moment problem
is labeled as \emph{Hamburger} ($a=-\infty,\:b=\infty$), \emph{Stieltjes}
($a=0,\:b=\infty$) or \emph{Hausdorff} (both $a$ and $b$ finite).
For illustration purposes we choose the latter and (without loss of
generality\footnote{A complete system of orthogonal functions on $\left(-1,1\right)$
can be scaled to arbitrary interval $\left(a,b\right)$ by linearly
scaling the argument of the functions.}) assume $a=-1$, $b=1$ (and a general, possibly negative $f$).
In this context we prove
\begin{prop}
The set $\left\{ \Delta_{n}\right\} _{n=0}^{\infty}$ cannot be constructed
from continuous functions expressible as Fourier-Legendre series.\label{prop:notDeltaForMoments}
\end{prop}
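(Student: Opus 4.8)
\section*{Proof proposal}

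The plan is to exploit the orthogonality of the Legendre polynomials to pin down the (unique) Legendre coefficients that any candidate $\Delta_m$ must possess, and then to show that these coefficients decay far too slowly for the series to represent a square-integrable (\emph{a fortiori} continuous) function. Writing the candidate as $\Delta_m(x)=\sum_{k=0}^{\infty}d_k P_k(x)$ and expanding each Legendre polynomial in monomials, $P_k(x)=\sum_{j=0}^{k}\gamma_{k,j}\,x^{j}$, I would compute $\int_{-1}^{1}P_k(x)\Delta_m(x)\,dx=\sum_{j=0}^{k}\gamma_{k,j}\,\mathcal{C}_j(\Delta_m)=\sum_{j=0}^{k}\gamma_{k,j}\,\delta_{j,m}$, which equals $\gamma_{k,m}$ for $k\ge m$ and vanishes for $k<m$. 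Since $\int_{-1}^{1}P_k^2\,dx=2/(2k+1)$, the delta property \emph{forces} $d_k=0$ for $k<m$ and $d_k=\tfrac{2k+1}{2}\gamma_{k,m}$ for $k\ge m$. No freedom remains, so it suffices to show that this one specific series fails to be continuous.

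First I would record the membership criterion. A continuous function on $[-1,1]$ lies in $L^2(-1,1)$, so its Fourier--Legendre series converges in the norm and Parseval applies, giving $\|\Delta_m\|_2^2=\sum_k \tfrac{2}{2k+1}d_k^2=\sum_{k\ge m}\tfrac{2k+1}{2}\gamma_{k,m}^2$. The whole proof thus reduces to showing $\sum_{k\ge m}(2k+1)\gamma_{k,m}^2=\infty$, i.e. that the coefficient of $x^m$ in $P_k$ does not shrink fast enough. From the explicit expansion of the Legendre polynomials only terms with $k\equiv m\pmod 2$ contribute, and for $k=m+2s$ one obtains $\gamma_{k,m}=\dfrac{(-1)^{s}}{2^{k}}\binom{k}{s}\binom{k+m}{k}$.

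The main obstacle is the asymptotic estimate of $\gamma_{k,m}$. Here I would apply Stirling's formula to the two binomials as $s\to\infty$ with $m$ fixed: the near-central factor obeys $2^{-k}\binom{m+2s}{s}\sim(\pi s)^{-1/2}$, while $\binom{k+m}{k}=\binom{2s+2m}{m}\sim(2s)^m/m!$ grows only polynomially, whence $(2k+1)\gamma_{k,m}^2\sim \dfrac{2^{2m+2}}{\pi(m!)^2}\,s^{2m}$. For $m=0$ this tends to the positive constant $4/\pi$ and for $m\ge 1$ it diverges; in either case the summands do not tend to zero, so $\sum_{k\ge m}(2k+1)\gamma_{k,m}^2=\infty$, contradicting the finiteness of $\|\Delta_m\|_2$. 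Hence no continuous $\Delta_m$ expressible as a Fourier--Legendre series exists. As a conceptual sanity check, the same conclusion follows from the generating identity $\int_{-1}^{1}e^{tx}\Delta_m(x)\,dx=\sum_n \tfrac{t^n}{n!}\delta_{n,m}=t^m/m!$: continued to $t=-i\xi$ this would make the Fourier transform of $\Delta_m$ equal to the polynomial $(-i\xi)^m/m!$, which no $L^2$ function can have by Plancherel's theorem (its transform must lie in $L^2(\mathbb{R})$). Morally, $\Delta_m$ would have to be the distribution $\tfrac{(-1)^m}{m!}\delta^{(m)}$, not a genuine function.
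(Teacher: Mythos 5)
Your proof is correct, and while it starts from the same place as the paper's, its decisive step is genuinely different. Both arguments use orthogonality plus the delta property to pin down the unique Legendre coefficients a candidate $\Delta_m$ would have to carry (your $d_k=\tfrac{2k+1}{2}\gamma_{k,m}$ is the same data as the paper's $\alpha_i^n$), and both then show these coefficients grow too fast. But where you diverge is in \emph{how} the growth is turned into a contradiction: the paper evaluates the Fourier--Legendre series at the single point $x_0=0$ for even $n=2m$, applies Stirling to the resulting terms $q_j^m$, and concludes the series cannot converge there; you instead invoke Bessel/Parseval and show that $\sum_k \tfrac{2}{2k+1}d_k^2=\sum_{k\ge m}\tfrac{2k+1}{2}\gamma_{k,m}^2$ has summands $\sim \tfrac{2^{2m+1}}{\pi (m!)^2}s^{2m}$ that do not tend to zero, so no such $\Delta_m$ can even lie in $L^2(-1,1)$. (Your asymptotics check out: $2^{-k}\binom{m+2s}{s}\sim(\pi s)^{-1/2}$ and $\binom{k+m}{m}\sim(2s)^m/m!$.) Your route buys robustness: it needs only Bessel's inequality rather than pointwise convergence of the series at a chosen point, it treats all $m$ (odd and even) uniformly, and it yields the stronger conclusion that $\Delta_m$ fails to be square-integrable, not merely continuous. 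The paper's route is more elementary and concrete but, as written, only directly handles even indices and tacitly identifies "expressible as a Fourier--Legendre series" with pointwise convergence at $x=0$. Your closing Fourier-transform remark is more than a sanity check: for a bounded $\Delta_m$ supported in $[-1,1]$ the interchange of sum and integral in $\int e^{-i\xi x}\Delta_m(x)\,dx=(-i\xi)^m/m!$ is legitimate (the exponential series converges uniformly on the compact interval), so Plancherel alone already gives a complete, Legendre-free proof; you could promote it from aside to main argument if you wanted the shortest possible write-up.
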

\begin{proof}
We proceed by contradiction and assume the existence of such $\Delta_{n}$
for all $n\epsilon\mathbb{N}_{0}$
\begin{align*}
\Delta_{n}(x) & =\sum_{i=0}^{\infty}\alpha_{i}^{n}L_{i}(x),\quad\alpha_{i}^{n}=\int_{-1}^{1}\Delta_{n}(x)L_{i}(x)dx=2^{i}\sqrt{\frac{2}{2i+1}}\binom{i}{n}\binom{\frac{i+n-1}{2}}{i},
\end{align*}
where $\left\{ L_{i}\right\} $ is an orthonormal basis $L_{i}=\sqrt{\frac{2}{2i+1}}P_{i}$
($P_{i}$ being the Legendre polynomials), the brackets denote the
generalized form of the binomial coefficient and we use (after writing
$L_{i}(x)$ as polynomial) the assumed delta property of $\Delta_{n}$.
Let us investigate the value of $\Delta_{n}$ at $x_{0}=0$ for some
even $n=2m$. One has
\begin{align*}
\Delta_{2m}(0) & =\sum_{j=0}^{\infty}\alpha_{j}^{2m}L_{j}(0)=\sum_{j=0}^{\infty}\left(-1\right)^{j}\frac{2}{4j+1}\binom{2j}{2m}\binom{j+m-\frac{1}{2}}{2j}\binom{2j}{j}\equiv\sum_{j=0}^{\infty}q_{j}^{m}.
\end{align*}
Re-writing an individual sum element in terms of factorials (using
the factorial expressions for $\Gamma\left[\frac{1}{2}\pm n\right]$)
one can study the large-$j$ behavior ($j\geq m$)
\begin{align*}
q_{j}^{m} & =\frac{\left(-1\right)^{m}}{4^{2j}}\:\frac{2}{4j+1}\:\frac{\left(2j\right)!}{\left(2m\right)!\left(j!\right)^{2}}\:\frac{\left(2j+2m\right)!}{(j+m)!(j-m)!}.
\end{align*}
The limit $j\rightarrow\infty$ is easily determined by taking the
logarithm and using the Stirling's approximation of the factorial
\[
\lim_{j\rightarrow\infty}\ln\left[\left(-1\right)^{m}q_{j}^{m}\right]=\infty.
\]
Thus $\Delta_{n}$ is (at zero and at least for some $n$) either
not defined (i.e. continuous) or not expressible as Fourier-Legendre
series. Both cases contradict the assumptions.
\end{proof}
Continuous partial delta functions $\Delta_{m}^{[N],M}$ can be found
by assuming (for example) a polynomial form and by solving equations
(\ref{eq:partialDeltaFns}), or by combining the Legendre polynomials
in an appropriate way so that the partial-delta behavior is obtained
(as demonstrated in the next paragraph). Numerical results suggest
that the expressions for $\Delta_{n}^{\left[N\rightarrow\infty\right]}$
diverge\footnote{If constructed as polynomials of a minimal degree, the partial delta
functions become with increasing $N$ more and more oscillatory with
an importantly rising amplitude.} almost everywhere on $\left(-1,1\right)$.

The moment-matching problem can be solved using a triangular approximation,
one example of such is represented by the Fourier-Legendre series
\cite{ASKEY1982237,Talenti_1987,2017CMMPh..57..175S}. The Legendre
polynomials $P_{m}=\nabla_{m}$ obey the triangular property (\ref{eq:triangularPropertyDef})
\begin{equation}
\int_{-1}^{1}P_{m}x^{n}dx=0\text{ for all }n<m\label{eq:LegPolyOrthog}
\end{equation}
and any partial approximation of $f$ of the order $N$ matches its
first $N+1$ moments. Indeed, if $f$ is expanded on $(-1,1)$ into
the series
\begin{equation}
f\left(x\right)=\sum_{n=0}^{N}\beta_{n}P_{n}\left(x\right),\label{eq:mometTriangularLegendre}
\end{equation}
then $m$th moment of the latter ($0\leq m\leq N$) is given by
\begin{align*}
 & \int_{-1}^{1}dx\:x^{m}\sum_{n=0}^{N}\beta_{n}P_{n}\left(x\right)=\\
 & \quad=\int_{-1}^{1}dx\:x^{m}\sum_{n=0}^{N}\left(\frac{2n+1}{2}\int_{-1}^{1}dy\:f\left(y\right)P_{n}\left(y\right)\right)P_{n}\left(x\right),\\
 & \quad=\int_{-1}^{1}dx\:x^{m}\int_{-1}^{1}dy\:f\left(y\right)\left[\sum_{n=0}^{\infty}\frac{2n+1}{2}P_{n}\left(y\right)P_{n}\left(x\right)-\sum_{n=N+1}^{\infty}\frac{2n+1}{2}P_{n}\left(y\right)P_{n}\left(x\right)\right],\\
 & \quad=c_{m}^{f},
\end{align*}
where we used the completeness property 
\[
\sum_{n=0}^{\infty}\frac{2n+1}{2}P_{n}\left(y\right)P_{n}\left(x\right)=\delta\left(x-y\right)
\]
 for the left term in the square brackets and the property (\ref{eq:LegPolyOrthog})
for the right one (we assume $f$ such that the order of integrals
and summation can be changed and the integration over $dx$ performed
first). In the expansion (\ref{eq:mometTriangularLegendre}) the coefficients
can be directly related to the (known) polynomial coefficients $\gamma_{j}^{n}$
of $P_{n}$ and moments $c_{j}^{f}$ of $f$
\begin{align}
\beta_{n} & =\frac{2n+1}{2}\int_{-1}^{1}f\left(x\right)P_{n}\left(x\right)dx=\frac{2n+1}{2}\int_{-1}^{1}f\left(x\right)\sum_{j=0}^{n}\gamma_{j}^{n}x^{j}dx\label{eq:betaCoefForLegende}\\
 & =\frac{2n+1}{2}\sum_{j=0}^{n}\gamma_{j}^{n}\int_{-1}^{1}f\left(x\right)x^{j}dx=\frac{2n+1}{2}\sum_{j=0}^{n}\gamma_{j}^{n}c_{j}^{f}\nonumber 
\end{align}
with
\begin{equation}
\gamma_{j}^{n}=2^{n}\binom{n}{j}\binom{\frac{n+j-1}{2}}{n}.\label{eq:gammaPolyCoefs}
\end{equation}
Furthermore, by choosing $\left(\overrightarrow{c_{m}^{f}}\right)_{j\leq N}=\delta_{m,j}$,
one can construct partial delta functions 
\begin{equation}
\Delta_{m}^{[N],M}=\sum_{n=0}^{N}\beta_{n}\left(\overrightarrow{c_{m}^{f}}\right)P_{n}\left(x\right).\label{eq:partialMomentConstructionLegendre}
\end{equation}

Numerical computations suggest (Fig. \ref{Fig:legOut}) that the Legendre-Fourier
series of many common functions converge to the function also outside
the $(-1,1)$ interval, which would be a distinctive feature from
the standard Fourier series. This observations needs to be supported
by rigorous arguments. 
\begin{figure}
\begin{centering}
\includegraphics[width=0.45\linewidth]{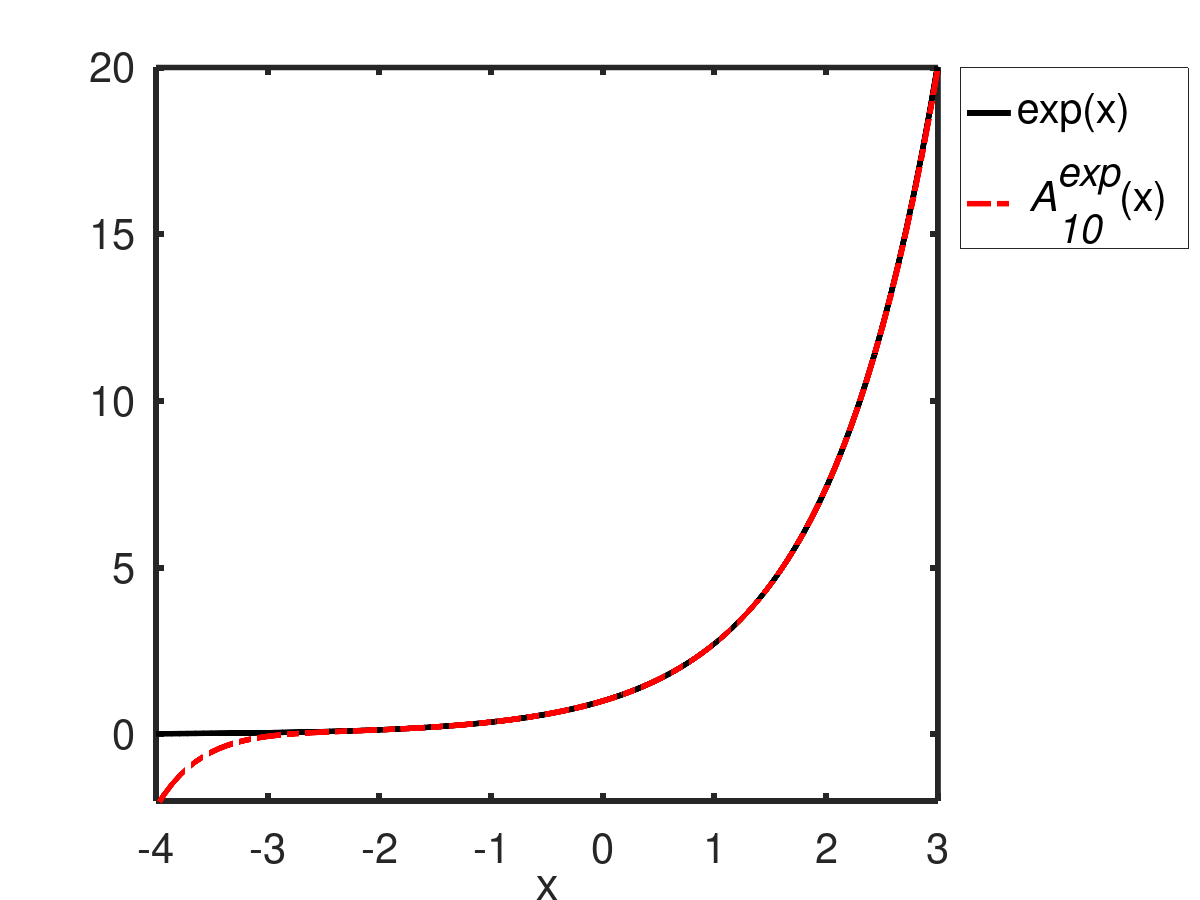} \includegraphics[width=0.45\linewidth]{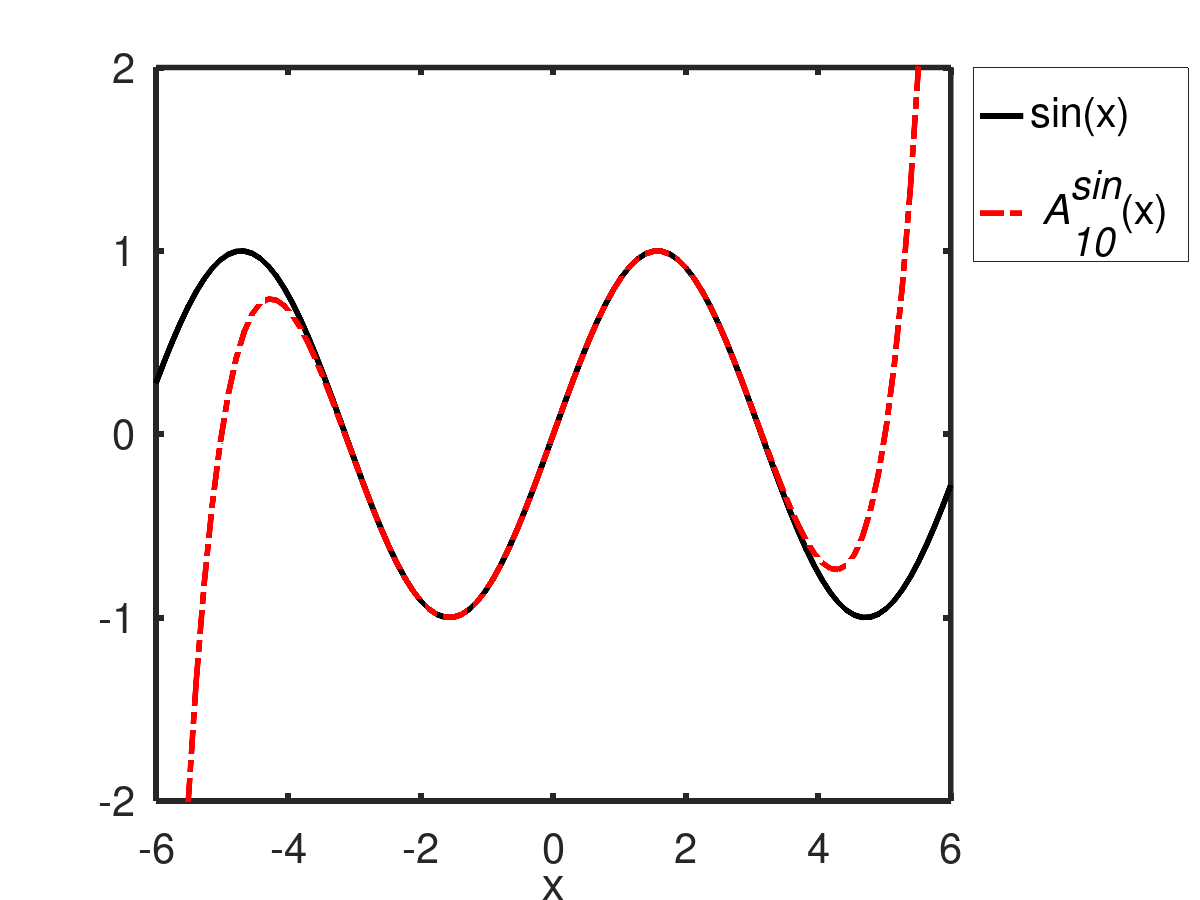}
\par\end{centering}
\caption{Examples of Legendre-Fourier expansions interpreted as triangular
moment-matching approximations shown on an interval larger than $\left(-1,1\right)$.}
\label{Fig:legOut}

\end{figure}

The notion of the moment can be generalized, the generalization is
often constructed as an integral where the power of $x$ is replaced
by an orthogonal polynomial (for an overview see \cite{10.1145/3331167}).
The importance of the moment expansion is derived from the importance
of the moments themselves: they are widely used in probability theory,
statistics, physics and many other fields too.

\subsubsection*{Higher integrals}

With the Taylor series being one of the most popular expansions, one
can ask whether a similar approximation based on higher-order integrals
(and not higher-order derivatives) could be constructed. However,
because of the integration constant freedom, the value of an anti-derivative
is not fixed. One can overcome this by setting its value in some arbitrary
way. For our purposes we define
\begin{align}
\mathcal{C}_{n}\left(f\right)\equiv c_{n}^{f}=f^{(-n)}(1),\quad\frac{d}{dx}f^{(-n)}(x) & =f^{(-n+1)}(x),\quad f^{(-n)}(-1)=0,\label{eq:hoIntegralsDef}
\end{align}
\[
\text{ for all }n>0.
\]
Even though new at the first sight (and maybe appropriate rather for
Sec. \ref{sec:newExamples}), this approximation can be related to
the moment-matching problem by the means of the Cauchy repeated-integral
formula. One has
\[
f^{(-n)}(1)\equiv c_{n}^{f}=\frac{1}{\left(n-1\right)!}\int_{-1}^{1}\left(1-t\right)^{n-1}f\left(t\right)dt.
\]
By subsequent substitutions $z=1-t$, $g\left(z\right)=f\left(1-z\right)$,
$w=z/2$ and $h(w)=g(2w)$ we arrive to
\[
c_{n}^{f}=\frac{1}{\left(n-1\right)!}\int_{0}^{2}z^{n-1}g\left(z\right)dz=\frac{2^{n}}{\left(n-1\right)!}\int_{0}^{1}w^{n-1}h\left(w\right)dw\equiv\frac{2^{n}}{\left(n-1\right)!}m_{n-1}^{h}.
\]
One can define $m_{n}^{h}=n!c_{n+1}^{f}/2^{n+1}$ ($n=0,1,\ldots$)
and interpret the latter as moments computed on the interval $\left(0,1\right)$.
The construction of all objects (approximations, delta functions)
on this interval is fully analogous to $\left(-1,1\right)$, only
the set of orthogonal polynomials is now represented by the shifted
Legendre polynomials $P_{n}^{\left(0,1\right)}$ and expressions (\ref{eq:betaCoefForLegende})
and (\ref{eq:gammaPolyCoefs}) are modified
\[
\widetilde{\beta}_{n}=\left(2n+1\right)\int_{0}^{1}f\left(x\right)P_{n}^{\left(0,1\right)}dx,\quad\widetilde{\gamma}_{j}^{n}=\left(-1\right)^{n+j}\binom{n}{j}\binom{n+j}{j}.
\]
By building a moment-matching approximation $\mathcal{A}^{h,M}\left(x\right)$
of $h$, a higher-integral approximation $\mathcal{A}^{f,I}$ of $f$
can be constructed
\[
\mathcal{A}^{f,I}\left(x\right)=\mathcal{A}^{h,M}\left(\frac{1-x}{2}\right).
\]
One may notice, that, unlike for the Taylor polynomials, the rules
(\ref{eq:hoIntegralsDef}) do not fix the function value at $x=1$
(are not applied at the zeroth order $n=0$). Indeed, the moments
$\left\{ m_{n}^{h}\right\} _{n=0}^{\infty}$ fully determine the function
$h$ (not only its shape but also its normalization) and thus $f$.

\subsection{Matching integrals of higher-order derivatives}

An approximation can be constructed upon functionals
\[
\mathcal{C}_{n}=\int_{a}^{b}dx\,\frac{d^{n}}{dx^{n}},\;c_{n}^{f}=\int_{a}^{b}f^{\left(n\right)}\left(x\right)\,dx\equiv f^{\left(n-1\right)}\left(b\right)-f^{\left(n-1\right)}\left(a\right).
\]

\subsubsection*{Bernoulli polynomial series}

Choosing this time $a=0$ and $b=1$, one can find an approximation
$\mathcal{A}^{f,B}$ constructed as a series with multiplicative coefficients
based on delta functions
\[
\mathcal{A}^{f,B}(x)=\sum_{n=0}^{\infty}a_{n}\Delta_{n}(x),\quad\Delta_{n}(x)=\frac{1}{n!}B_{n}\left(x\right),
\]
where $B_{n}$ are the Bernoulli polynomials. The matching is done
by setting
\[
a_{n}=c_{n}^{f}.
\]
A peculiar situation happens for $n=0$, where one needs to compute
an integral. However the $c_{0}$ coefficient corresponds only to
a global shift (up/down) of $f$, because $B_{0}\left(x\right)$ is
just a constant. Thus the integration can be avoided: one can build
the expansion $\mathcal{A}_{n>0}^{f,B}(x)$ for all $n>0$ and then
adjust the normalization by matching the value at some point $0\leq x_{0}\leq1$
\[
\mathcal{A}^{f,B}(x)=\mathcal{A}_{n>0}^{f,B}(x)+f(x_{0})-\mathcal{A}_{n>0}^{f,B}(x_{0}).
\]
Ignoring the latter complication, the expansion has a lot of beauty
since it is almost as easy to build as the Taylor series, one only
needs to know the derivatives at two points instead of one. Despite
the simplicity, it seems not to be known very well, although appearing
in the literature at several places \cite{krylov1962,jordan1965,MORDELL1966132}.
One can consult the latter reference to address the convergence questions.
Some problems regarding the convergence can be easily seen when realizing
how the delta functions are constructed. One can formally introduce
a function $\Delta_{-1}$ satisfying
\begin{equation}
\mathcal{C}_{n}\left(\Delta_{-1}\right)=0\text{ for all }n\geq0.\label{eq:deltaForBernoulli}
\end{equation}
By integration one defines functions $\Delta_{n\geq0}$, $\Delta'_{n+1}=\Delta{}_{n}$
and by a careful choice of integration constants one can fulfill the
delta property $\mathcal{C}_{n}\left(\Delta_{m}\right)=\delta_{n,m}$.
A natural choice $\Delta_{-1}(x)\equiv0$ leads to the Bernoulli polynomials.
However there are other functions, such as for example $\cos\left(2\pi x-\pi\right)$,
which also fulfill (\ref{eq:deltaForBernoulli}) and which would lead,
performing the integration, to different delta functions. Thus one
cannot expect the Bernoulli-polynomials based expansion to converge
for $\cos\left(2\pi x-\pi\right)$ on $\left(0,1\right)$.

The approximation can be scaled to an arbitrary finite interval $\left(a,b\right)$
by scaling the argument
\[
\mathcal{A}_{\left(a,b\right)}^{f,B}(x)=\sum_{n=0}^{\infty}a_{n}\Delta_{n}^{\left(a,b\right)}(x),\quad\Delta_{n}^{\left(a,b\right)}(x)=\left(b-a\right)^{n-1}\Delta_{n}(\frac{x-a}{b-a}).
\]
A natural question rises about the behavior of $\mathcal{A}_{\left(a,b\right)}^{f,B}$
for $b\rightarrow a$.
\begin{prop}
For an analytic function $f$, the partial approximation $\mathcal{A}_{\left(a,b\right),N}^{f,B}$
becomes in the limit $b\rightarrow a$ its Taylor polynomial.
\end{prop}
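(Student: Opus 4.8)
The plan is to pass to the limit $b\to a$ term by term in the finite sum
\[
\mathcal{A}_{\left(a,b\right),N}^{f,B}\left(x\right)=\sum_{n=0}^{N}c_{n}^{f}\,\Delta_{n}^{\left(a,b\right)}\left(x\right),\qquad\Delta_{n}^{\left(a,b\right)}\left(x\right)=\left(b-a\right)^{n-1}\frac{1}{n!}B_{n}\!\left(\frac{x-a}{b-a}\right),
\]
and to show that the $n$-th summand converges to the $n$-th Taylor monomial $\frac{f^{\left(n\right)}\left(a\right)}{n!}\left(x-a\right)^{n}$ at the point $a$ where the two merging evaluation points coalesce. The immediate difficulty is that the argument $\frac{x-a}{b-a}$ of $B_{n}$ diverges as $b\to a$ for fixed $x\neq a$, so no naive limit is available; the resolution is that $B_{n}$ is a polynomial of degree $n$ with leading coefficient $1$, so this divergence is exactly compensated by the prefactor $\left(b-a\right)^{n-1}$ together with the vanishing of the characteristic number $c_{n}^{f}$.

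Concretely, I would first expand $B_{n}\left(u\right)=\sum_{k=0}^{n}b_{n,k}u^{k}$ with $b_{n,n}=1$, so that
\[
\Delta_{n}^{\left(a,b\right)}\left(x\right)=\frac{1}{n!}\sum_{k=0}^{n}b_{n,k}\left(x-a\right)^{k}\left(b-a\right)^{n-1-k}.
\]
Since $f$ is analytic, I would then Taylor-expand the characteristic number $c_{n}^{f}=f^{\left(n-1\right)}\left(b\right)-f^{\left(n-1\right)}\left(a\right)$ about $a$,
\[
c_{n}^{f}=\sum_{j=1}^{\infty}\frac{f^{\left(n-1+j\right)}\left(a\right)}{j!}\left(b-a\right)^{j},
\]
which converges for $b$ in a neighbourhood of $a$. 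Multiplying the two displays, the generic contribution of the pair $\left(j,k\right)$ carries the factor $\left(b-a\right)^{j+n-1-k}$.

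The core of the argument is then a bookkeeping of these exponents. For $0\le k\le n$ and $j\ge1$ one checks $j+n-1-k\ge0$, so the limit exists (no term blows up), and the exponent vanishes precisely for the single pair $\left(j,k\right)=\left(1,n\right)$; every other term carries a strictly positive power of $\left(b-a\right)$ and hence drops out. The surviving term equals $\frac{f^{\left(n\right)}\left(a\right)}{n!}\left(x-a\right)^{n}$, using $b_{n,n}=1$, which proves $\lim_{b\to a}c_{n}^{f}\Delta_{n}^{\left(a,b\right)}\left(x\right)=\frac{f^{\left(n\right)}\left(a\right)}{n!}\left(x-a\right)^{n}$ for every $n\ge1$.

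It remains to dispose of the index $n=0$ separately, because there $\Delta_{0}^{\left(a,b\right)}\left(x\right)=\left(b-a\right)^{-1}$ and $c_{0}^{f}=\int_{a}^{b}f\left(x\right)dx$, whence $c_{0}^{f}\Delta_{0}^{\left(a,b\right)}\left(x\right)=\left(b-a\right)^{-1}\int_{a}^{b}f\left(x\right)dx\to f\left(a\right)$ by the fundamental theorem of calculus, which matches the zeroth Taylor term. Summing the $N+1$ limits gives $\sum_{n=0}^{N}\frac{f^{\left(n\right)}\left(a\right)}{n!}\left(x-a\right)^{n}$, the Taylor polynomial of $f$ at $a$, as claimed. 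The only genuine obstacle is the $\frac{x-a}{b-a}$ blow-up; once it is established that the exponent $j+n-1-k$ is nonnegative and equals zero for a unique index pair, the rest is routine.
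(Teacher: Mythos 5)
Your proof is correct, and it uses the same essential ingredients as the paper's --- the explicit monic expansion $B_{n}\left(u\right)=\sum_{k=0}^{n}b_{n,k}u^{k}$ with $b_{n,n}=1$ and the first-order expansion $c_{n}^{f}=f^{\left(n\right)}\left(a\right)\left(b-a\right)+\mathcal{O}\left(\left(b-a\right)^{2}\right)$ --- but you organize the double sum differently. The paper substitutes both expansions and then rearranges the finite double sum so as to collect the coefficient of each power $\left(x-a\right)^{k}$ across all $n\geq k$ (its equation (\ref{eq:sumsReArangement})), identifying $n=k$ as the sole surviving contribution; you instead fix $n$ and take the limit of the single summand $c_{n}^{f}\Delta_{n}^{\left(a,b\right)}\left(x\right)$, showing by the exponent count $j+n-1-k\geq0$ with equality only at $\left(j,k\right)=\left(1,n\right)$ that it tends to $\frac{f^{\left(n\right)}\left(a\right)}{n!}\left(x-a\right)^{n}$, and then sum the $N+1$ limits. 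For finite $N$ both bookkeepings are equally rigorous; yours is arguably tidier in that it needs no rearrangement of the double sum, and you also treat the $n=0$ term explicitly via $\left(b-a\right)^{-1}\int_{a}^{b}f\rightarrow f\left(a\right)$, a case the paper's uniform notation $f^{\left(n-1\right)}\left(a+\varepsilon\right)-f^{\left(n-1\right)}\left(a\right)$ only covers implicitly. One small remark: full analyticity is not really used in your argument --- the first-order Taylor expansion with remainder (so $f\in C^{N+1}$) suffices, which is consistent with the paper's own comment that the analyticity hypothesis can presumably be relaxed.
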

\begin{proof}
Using the notation $b=a+\varepsilon$, straightforward computations
yield
\[
\mathcal{A}_{\left(a,a+\varepsilon\right),N}^{f,B}=\sum_{n=0}^{N}\left[f^{\left(n-1\right)}\left(a+\varepsilon\right)-f^{\left(n-1\right)}\left(a\right)\right]\varepsilon^{n-1}\frac{1}{n!}B_{n}\left(\frac{x-a}{\varepsilon}\right).
\]
Writing $f^{\left(n-1\right)}\left(a+\varepsilon\right)=f^{\left(n-1\right)}\left(a\right)+\varepsilon f^{\left(n\right)}\left(a\right)+\mathcal{O}\left(\varepsilon^{2}\right)$
and using the notation $f^{\left(n\right)}\equiv f^{\left(n\right)}\left(a\right)$
one has
\[
\mathcal{A}_{\left(a,a+\varepsilon\right),N}^{f,B}=\sum_{n=0}^{N}\left[\varepsilon f^{\left(n\right)}+\mathcal{O}\left(\varepsilon^{2}\right)\right]\varepsilon^{n-1}\frac{1}{n!}\sum_{k=0}^{n}\frac{n!}{(n-k)!k!}B_{n-k}\varepsilon^{-k}\left(x-a\right)^{k},
\]
where explicit formulas for the Bernoulli polynomials were used with
$B_{n-k}$ denoting the Bernoulli numbers. Re-arranging the sums one
arrives to
\begin{equation}
\mathcal{A}_{\left(a,a+\varepsilon\right),N}^{f,B}=\sum_{k=0}^{N}\frac{1}{k!}\left\{ \sum_{n=k}^{N}\varepsilon^{n-k}\frac{f^{\left(n\right)}+\mathcal{O}\left(\varepsilon\right)}{(n-k)!}B_{n-k}\right\} \left(x-a\right)^{k},\label{eq:sumsReArangement}
\end{equation}
One can study an individual term in front of the powers of $\left(x-a\right)$
in the limit $\varepsilon\rightarrow0$. In this limit the dominant
$\varepsilon$ term is the one where $n=k$. Thus we have
\[
\lim_{\varepsilon\rightarrow0}\sum_{n=k}^{N}\varepsilon^{n-k}\frac{f^{\left(n\right)}+\mathcal{O}\left(\varepsilon\right)}{(n-k)!}B_{n-k}\overset{n=k}{=}f^{\left(k\right)}
\]
leading to
\[
\lim_{\varepsilon\rightarrow0}\mathcal{A}_{\left(a,a+\varepsilon\right),N}^{f,B}=\sum_{k=0}^{N}\frac{1}{k!}f^{\left(k\right)}\left(x-a\right)^{k}
\]
which corresponds to the Taylor polynomial.
\end{proof}
Assuming the re-arrangement of the sums (\ref{eq:sumsReArangement})
is justified for $N=\infty$, the whole proof is valid for a full
approximation and the Taylor series. Also, the analyticity condition
can be presumably relaxed if the error in the small parameter expansion
of $f^{\left(n-1\right)}\left(\varepsilon\right)$ is treated carefully.
Numerical observations indicate a rich set of interesting properties
which await to be addressed rigorously:
\begin{itemize}
\item Functions such as $\cos\left(x\right)$ do not have (as expected)
a convergent approximation $\mathcal{A}_{\left(a,b\right)}^{f,B}$
for $b-a=2\pi$. However, if the length of the interval differs only
slightly from (a multiple of) $2\pi$, the approximation seems to
converge.
\item Numerical computations suggest that for many common functions the
approximation $\mathcal{A}_{\left(a,b\right)}^{f,B}$ fails to converge
once the interval length goes over some limit. For $\exp\left(x\right)$
approximated on a symmetric interval $\left(-a,a\right)$ the convergence
seems to be lost for $a\apprge\pi$.
\item The quality of partial approximations $\mathcal{A}_{N}^{f,B}$may
increase with increasing $N$ up to some $N_{0}$, and decrease (become
divergent) afterwards. Such behavior is observed for $f(x)=\sqrt{4-x^{2}}$
approximated on $\left(-1,1\right)$ where the best-quality partial
approximation $\mathcal{A}_{\left(-1,1\right),N}^{f,B}$ is realized
for $N=4$ and $N=5$.
\end{itemize}

\subsection{Value matching}

Approximations (often partial) meant to match function values 
\[
\mathcal{C}_{n}\left(f\right)=f\left(x_{n}\right),\;c_{n}^{f}=f\left(x_{n}\right),\;\left\{ x_{n}\right\} _{n=0}^{N,\infty},\;x_{n}\epsilon\left(a,b\right),
\]
where $\left\{ x_{n}\right\} $ is a predefined set of numbers, are
usually referred to as interpolations and represent a very large topic
covered by many resources. Hence, we present only the most popular
ones, among them the Lagrange form of the interpolation polynomial.
It is a delta approximation
\[
\mathcal{A}_{N}^{f,V}(x)=\sum_{n=0}^{N}a_{n}\Delta_{n}(x),\quad\Delta_{n}(x)=\prod_{i=0,i\neq n}^{N}\frac{(x-x_{i})}{(x_{n}-x_{i})},\;a_{n}=c_{n}^{f},
\]
where the delta property is reached by the creation of zeros in the
numerator for all $\left\{ x_{i}\right\} $ except $x_{n}$. The denominator
then guarantees a correct normalization $\Delta_{n}(x_{n})=1$. The
Newton form of the interpolation polynomial is a modification of the
previous and provides a triangular approximation
\[
\mathcal{A}_{N}^{f,V'}(x)=\sum_{n=0}^{N}a_{n}\nabla_{n}(x),\quad\nabla_{n}(x)=\prod_{i=0}^{n-1}(x-x_{i}),\;a_{n}=a_{n}\left(c_{0}^{f},\ldots,c_{n}^{f}\right)
\]
for which the individual terms of the sum are defined also in the
limit $N\rightarrow\infty$, the coefficients $a_{n}$ are divided
differences. The idea of creating zeros can be generalized and an
approximation built using an arbitrary function $\rho$ such that
$\rho\left(0\right)=0$
\[
\mathcal{A}_{N}^{f,V''}\left(x\right)=\sum_{n=0}^{N}c_{n}^{f}\prod_{k=0,k\neq n}^{N}\frac{\rho\left(x-x_{k}\right)}{\rho\left(x_{n}-x_{k}\right)}.
\]
For $\rho\left(x\right)=\sin\left(x\right)$ this leads to a trigonometric
interpolation (of a non-minimal degree) and Fourier series.

Several other approximations can be found on the market, usually less
popular (e.g. the Whittaker--Shannon interpolation formula, which
we address in Sec. \ref{subsec:Variations-on-Whittaker=002013Shannon}
or \cite{Filomat}). Nevertheless a number of theoretical results
have been derived in this domain, most of them related to the existence
of an entire function (in the complex plane) with the interpolation
property. One can mention e.g. the Nevanlinna--Pick problem \cite{zbMATH02616893}
or the Weierstrass factorization theorem (which turns $f$ into an
infinite product).

This kind of approximations can be useful for functions which are
easy to evaluate on some countable set of their arguments, thus providing
a way to approximate them elsewhere. The structure of this set is
usually specified by the method itself (e.g. equidistant points for
the Whittaker--Shannon formula) and cannot be later changed to suit
the function.

\section{New examples\label{sec:newExamples}}

\subsection{Matching derivatives}

In this sub-section we present new approximations based on the derivative
matching
\[
c_{n}^{f}=\frac{d^{n}f\left(x\right)}{dx^{n}}|_{x=x_{0}=0}.
\]
We propose three expansions exploiting a similar idea which is to
build a series with multiplicative coefficients where, by construction,
the coefficients enter into the game progressively when higher order
derivatives are performed (i.e. the approximations are triangular).
The fourth example is somewhat different, it deals with a partial
delta approximation.

\subsubsection{Expansion into $g\left(x\right)\sum a_{n}x^{n},\;g\left(0\right)\protect\neq0$}

An approximation of the form
\begin{equation}
\mathcal{A}^{f}\left(x\right)=g\left(x\right)\sum_{n=0}^{\infty}a_{n}x^{n},\;g\left(0\right)\neq0\label{eq:gFaktorVpredu}
\end{equation}
is triangular because, following the product and chain differentiation
rules, the term $g\left(x\right)x^{n}$ becomes non-zero only after
$n$ differentiations. To make a connection between power-expansion
coefficients of $f$, $f_{n}=c_{n}^{f}/n!$ and those of $g$, $g_{n}=g^{\left(n\right)}\left(0\right)/n!$
let us assume that both functions are analytic. Then, the Cauchy product
of $\sum a_{k}x^{k}$ and $g\left(x\right)=\sum g_{k}x^{k}$ on the
right-hand side of (\ref{eq:gFaktorVpredu}) yields
\[
\sum_{k=0}^{\infty}f_{k}x^{k}\overset{!}{=}\sum_{k=0}^{\infty}\left[\sum_{n=0}^{k}a_{n}g_{k-n}\right]x^{k},
\]
where the expression in the square brackets corresponds to a lower
triangular Toeplitz matrix
\[
\left(\begin{array}{c}
f_{0}\\
f_{1}\\
f_{2}\\
\vdots
\end{array}\right)=\left(\begin{array}{cccc}
g_{0} & 0 & 0 & \cdots\\
g_{1} & g_{0} & 0 & \cdots\\
g_{2} & g_{1} & g_{0} & \cdots\\
\vdots & \vdots & \vdots & \ddots
\end{array}\right)\left(\begin{array}{c}
a_{0}\\
a_{1}\\
a_{2}\\
\vdots
\end{array}\right).
\]
To invert the matrix a recursive formula based on generalized Fibonacci
polynomials is available \cite{SahinAdem}. The $a_{n}$ coefficients
in (\ref{eq:gFaktorVpredu}) are expressed in terms of $g_{n}$ and
$f_{n}$ as follows 
\[
a_{n}=\frac{1}{g_{0}}\sum_{i=0}^{n}\Lambda_{n-i}f_{i},\text{ where }\varLambda_{k}=\begin{cases}
1 & \text{ if }k=0\\
-\left(\sum_{j=1}^{k}g_{j}\varLambda_{k-j}\right)/g_{0} & \text{else}
\end{cases}.
\]
The expansion can be shifted to an arbitrary point $x_{0}\epsilon\mathbb{R}$
by shifting its argument. One also notes that the expansion is equivalent
to building the Taylor series for $h(x)\equiv f(x)/g\left(x\right)$,
its novelty lies in finding the relation between the coefficients
$\left\{ a_{i}\right\} $ and the derivatives of $f$ (an not those
of $h$). In order to provide an example without recurrent relations
we chose $g\left(x\right)=\exp\left(wx^{q}\right)$ and prove
\begin{prop}
A smooth function $f$can be approximated at $x_{0}=0$ by
\begin{equation}
\mathcal{A}^{f,D_{1}}\left(x\right)=\exp\left(wx^{q}\right)\sum_{n=0}^{\infty}a_{n}x^{n},\quad q\epsilon\mathbb{N^{+}},w\epsilon\mathbb{R},\label{eq:newExpansion_D1}
\end{equation}
with
\begin{equation}
a_{n}=\sum_{i=0}^{n}m_{n,i}c_{i}^{f},\quad m_{n,i}=\delta_{v_{n-i},0}\frac{\left(-1\right)^{u_{n-i}}w^{\left\llbracket u_{n-i}\right\rrbracket }}{\left(v_{n}+qu_{i}\right)!u_{n-i}!},\label{eq:newExp_1_coef}
\end{equation}
\[
u_{n}=\left\lfloor \frac{n}{q}\right\rfloor ,\quad v_{n}=n\%q\text{ and }x^{\left\llbracket y\right\rrbracket }=\begin{cases}
1 & \text{\text{for }}x=y=0\\
x^{y} & \text{else (when defined)}
\end{cases},
\]
where $\left\lfloor \:\right\rfloor $ is the floor function, $\%$
represents the modulo operation and the generalized exponentiation
allows (otherwise undefined) expansion for cases $w=0$.
\end{prop}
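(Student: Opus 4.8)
The plan is to lean on the fact, already recorded in the general discussion preceding the statement, that the coefficients $a_n$ in (\ref{eq:gFaktorVpredu}) are precisely the Taylor coefficients of $h(x)\equiv f(x)/g(x)$, combined with the special feature of the chosen $g$: since $g(x)=\exp(wx^q)$, its reciprocal is again an exponential, $1/g(x)=\exp(-wx^q)$, whose power series is completely explicit and requires no recursion. This is exactly the reason this particular $g$ was selected, so the whole point is that the general recursive $\Lambda_k$ formula collapses to a closed expression here.

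First I would write down the power series of the reciprocal. Expanding the exponential gives $\exp(-wx^q)=\sum_{k=0}^{\infty}\frac{(-w)^k}{k!}x^{qk}$, so its coefficient of $x^m$ is supported only on multiples of $q$ and equals $\delta_{v_m,0}\,\frac{(-1)^{u_m}w^{u_m}}{u_m!}$ in the notation $u_m=\lfloor m/q\rfloor$, $v_m=m\%q$. Next I would form the Cauchy product of $f(x)=\sum_i\frac{c_i^f}{i!}x^i$ with this series; reading off the coefficient of $x^n$ yields $a_n=\sum_{i=0}^{n}\frac{c_i^f}{i!}\,\delta_{v_{n-i},0}\,\frac{(-1)^{u_{n-i}}w^{u_{n-i}}}{u_{n-i}!}$, which already has the required shape $a_n=\sum_i m_{n,i}c_i^f$ with $m_{n,i}=\frac{1}{i!}\,\delta_{v_{n-i},0}\,\frac{(-1)^{u_{n-i}}w^{u_{n-i}}}{u_{n-i}!}$.

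The only step that demands care is reconciling the factor $1/i!$ that arises naturally from the Cauchy product with the factor $1/(v_n+qu_i)!$ written in (\ref{eq:newExp_1_coef}). I would argue as follows: the Kronecker delta $\delta_{v_{n-i},0}$ annihilates every term unless $q\mid(n-i)$; on that support $n\equiv i\pmod q$, whence $v_n=v_i$, and therefore $v_n+qu_i=v_i+qu_i=i$, so that $(v_n+qu_i)!=i!$ exactly where the term is nonzero. The two expressions for $m_{n,i}$ thus agree everywhere. This index bookkeeping is the main (albeit light) obstacle of the proof; once it is settled, everything else is a direct reading-off of Cauchy-product coefficients, and the triangularity of the form is already guaranteed by the general remark that $g(x)x^n$ first contributes after $n$ differentiations.

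Finally, I would note that replacing $w^{u_{n-i}}$ by the generalized power $w^{\llbracket u_{n-i}\rrbracket}$ affects only the degenerate case $w=0$, $u_{n-i}=0$, where the stated convention forces the value $1$; this correctly recovers $h=f$ (hence $a_n=c_n^f/n!$ and the plain Taylor series) in the trivial limit $g\equiv1$, so the formula remains valid across the full range $w\in\mathbb{R}$, $q\in\mathbb{N}^{+}$ claimed in the statement.
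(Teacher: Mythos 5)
Your proof is correct, but it takes a genuinely different route from the paper's. The paper works ``forward'': it applies the general Leibniz rule to compute $\frac{d^{m}}{dx^{m}}\mathcal{A}^{f,D_{1}}|_{x=0}$, packages the result as a lower-triangular matrix $D$ acting on the coefficients (equation (\ref{eq:maticaD})), then \emph{states} the claimed inverse $D^{-1}$ and verifies $DD^{-1}=I$ by decomposing both matrices into $q\times q$ diagonal blocks and reducing the off-diagonal entries to the alternating binomial identity $\sum_{s=0}^{r}(-1)^{s}\binom{r}{s}=0$. You instead \emph{derive} the inverse constructively: since $1/g(x)=\exp(-wx^{q})$ is again an explicit exponential series supported on multiples of $q$, the Toeplitz system $f_{k}=\sum_{n}a_{n}g_{k-n}$ from the general discussion around (\ref{eq:gFaktorVpredu}) is solved in closed form by one Cauchy product, with no need to guess-and-check a matrix inverse; your observation that the Kronecker delta forces $n\equiv i\pmod q$, hence $v_{n}+qu_{i}=v_{i}+qu_{i}=i$ and $(v_{n}+qu_{i})!=i!$, correctly reconciles your expression with the form of $m_{n,i}$ in (\ref{eq:newExp_1_coef}), and your treatment of the $w=0$ convention matches the paper's. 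Your route is shorter and explains \emph{why} the formula holds (the reciprocal of this particular $g$ needs no recursion), while the paper's verification is more self-contained and does not presuppose the reciprocal-series viewpoint. One cosmetic caveat: for merely smooth (non-analytic) $f$ the ``Cauchy product'' should be read as the Leibniz rule for derivatives of the product $f(x)\exp(-wx^{q})$ at $x=0$ (equivalently, a product of formal power series), since the Taylor series of $f$ need not converge; the computation is identical term by term, and this is exactly the level of rigor the paper itself adopts, so it is not a gap.
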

\begin{proof}
Using the general Leibniz rule we differentiate (\ref{eq:newExpansion_D1})
$m$ times
\[
\frac{d^{m}}{dx^{m}}\left[\exp\left(wx^{q}\right)\sum_{n=0}^{\infty}a_{n}x^{n}\right]=\sum_{k=0}^{m}\binom{m}{k}\left[\sum_{n=0}^{\infty}a_{n}x^{n}\right]^{(m-k)}\left[\exp\left(wx^{q}\right)\right]^{(k)}.
\]
The first factor becomes 
\[
\left[\sum_{n=0}^{\infty}a_{n}x^{n}\right]_{x=0}^{(m-k)}=\left[\sum_{n=0}^{\infty}\frac{(n+m-k)!}{n!}a_{n+m-k}x^{n}\right]_{x=0}=(m-k)!a_{m-k},
\]
where we are interested in the absolute term which determines the
derivative at zero. The second factor can be deduced from the repeated
differentiation of the corresponding power expansion. One has
\[
\left[\frac{d^{k}}{dx^{k}}\exp\left(wx^{q}\right)\right]_{x=0}=\left[\frac{d^{k}}{dx^{k}}\sum_{n=0}^{\infty}\frac{1}{n!}\left(wx^{q}\right)^{n}\right]_{x=0}=\delta_{v_{k},0}w^{u_{k}}\frac{(u_{k}q)!}{u_{k}!}.
\]
Combining the two results one arrives at
\begin{align}
\frac{d^{m}}{dx^{m}}\left[\exp\left(wx^{q}\right)\sum_{n=0}^{\infty}a_{n}x^{n}\right]_{x=0} & =\sum_{j=0}^{m}\delta_{v_{m-j},0}\frac{m!(u_{m-j}q)!}{(m-j)!u_{m-j}!}w^{u_{m-j}}a_{j},\label{eq:maticaD}\\
 & \equiv\sum_{j=0}^{m}D_{m,j}a_{j},\nonumber 
\end{align}
which gives derivatives as a function of the coefficients $a_{j}$.
Using the matrix language (and defining $D_{m,j}=0$ for $j>m$),
the dependence of coefficients on derivatives is provided by $D^{-1}$.
One has
\begin{equation}
\left(D^{-1}\right)_{n,i\leq n}=\delta_{v_{n-i},0}\frac{\left(-1\right)^{u_{n-i}}w^{\left\llbracket u_{n-i}\right\rrbracket }}{\left(v_{n}+qu_{i}\right)!u_{n-i}!}\label{maticaInvD}
\end{equation}
with $\left(D^{-1}\right)_{n,i>n}=0$. The (infinite) matrices $D$
and $D^{-1}$ are lower triangular, thus $\left(DD^{-1}\right)_{n,i>n}=0$.
For $n=i$ the computation is straightforward ($u_{0}=0$, $v_{0}=0$)
\begin{align*}
\left(DD^{-1}\right)_{i,i} & =\sum_{j=0}^{\infty}D_{i,j}\left(D^{-1}\right)_{j,i}=D_{i,i}\left(D^{-1}\right)_{i,i}=i!\times\frac{1}{\left(v_{i}+qu_{i}\right)!}=1.
\end{align*}
A non-trivial computation arises for
\begin{align}
\left(DD^{-1}\right)_{m,i<m} & =\sum_{j=0}^{\infty}D_{m,j}\left(D^{-1}\right)_{j,i}=\sum_{j=i}^{m}D_{m,j}\left(D^{-1}\right)_{j,i}\label{eq:checkInversity}
\end{align}
The delta functions in (\ref{eq:maticaD}) (\ref{maticaInvD}) imply
$D$ and $D^{-1}$are block matrices with $q\times q$ sub-matrices
which are diagonal. The $\left\{ c_{k}^{f}\right\} $ and $\left\{ a_{k}\right\} $
sets so split into $q$ subsets which transform independently and
are characterized by the value of $w=0,\ldots,q-1$. One can write
\[
\left(D_{w}^{\left(-1\right)}\right)_{p,k}\equiv\left(D^{\left(-1\right)}\right)_{w+pq,w+kq}
\]
and analyze for $k<p$
\begin{align*}
\left(D_{w}D_{w}^{-1}\right)_{p,k} & =\sum_{t=k}^{p}\left(D_{w}\right)_{p,t}\left(D_{w}^{-1}\right)_{t,k}\\
 & =\sum_{t=k}^{p}\frac{\left(w+pq\right)!(u_{w+pq-w-tq}q)!}{(w+pq-w-tq)!u_{w+pq-w-tq}!}w^{u_{w+pq-w-tq}}\\
 & \qquad\qquad\times\frac{\left(-1\right)^{u_{w+tq-w-kq}}w^{\left\llbracket u_{w+tq-w-kq}\right\rrbracket }}{\left(v_{w+tq}+qu_{w+kq}\right)!u_{w+tq-w-kq}!}
\end{align*}
One has
\begin{align*}
u_{pq-tq} & =p-t,\quad u_{tq-kq}=t-k,\quad v_{w+tq}=v_{w},\quad u_{w+kq}=u_{w}+k
\end{align*}
which leads to
\begin{align*}
\left(D_{w}D_{w}^{-1}\right)_{p,k} & =\sum_{t=k}^{p}\frac{\left(w+pq\right)!}{\left(p-t\right)!}w^{p-t}\times\frac{\left(-1\right)^{t-k}w^{\left\llbracket t-k\right\rrbracket }}{\left(w+qk\right)!\left(t-k\right)!}\\
 & =w^{p-k}\frac{\left(w+pq\right)!}{\left(w+qk\right)!}\sum_{t=k}^{p}\frac{\left(-1\right)^{t-k}}{\left(p-t\right)!\left(t-k\right)!}.
\end{align*}
Using a substitution $s=t-k$ with the subsequent definition $r=p-k$
the latter sum becomes
\[
\sum_{t=k}^{p}\frac{\left(-1\right)^{t-k}}{\left(p-t\right)!\left(t-k\right)!}=\sum_{s=0}^{r}\frac{\left(-1\right)^{s}}{\left(r-s\right)!s!}=\frac{1}{r!}\sum_{s=0}^{r}\left(-1\right)^{s}\binom{r}{s}=0.
\]
The proof uses the standard exponentiation, the validity of the generalized
exponentiation can be for $w=0$ easily checked.
\end{proof}
The special case $w=0$ corresponds to the Taylor series. The convergence
properties can be derived from the criteria used for the Taylor series
applied to the $\left\{ a_{i}\right\} $ coefficients. Indeed, the
exponential is an entire function in the whole complex plane and thus
its power series converges absolutely at each complex point. Therefore,
for a given $x$, the expression (\ref{eq:newExpansion_D1}) can be
interpreted as a multiplication of two sequences, one of which is
absolutely convergent. Then, by the Mertens' convergence theorem,
the convergence of $\sum_{n=0}^{\infty}a_{n}x^{n}$ implies the convergence
of the whole expression which converges to the product of the two
series.

The proposed expansion may be interesting for several reasons. If
one chooses, for example, an even and negative $w$, then each partial
approximation approaches zero as $\left|x\right|\rightarrow\infty$
and has a finite integral over $\mathbb{R}$ (unlike e.g. the Taylor
polynomials). Also, the structure as in (\ref{eq:newExpansion_D1})
is very often to be seen in physics and mathematics, e.g. the radial
parts of wave functions for commonly studied spherical potentials
take in many cases this form. Also, if an extension to complex $q$
is possible (not studied here), the form of the approximation fits
the definition of spherical harmonics. Example approximations using
(\ref{eq:newExpansion_D1}) are shown in Fig \ref{fig:expPoly}.
\begin{figure}
\begin{centering}
\includegraphics[width=0.45\textwidth]{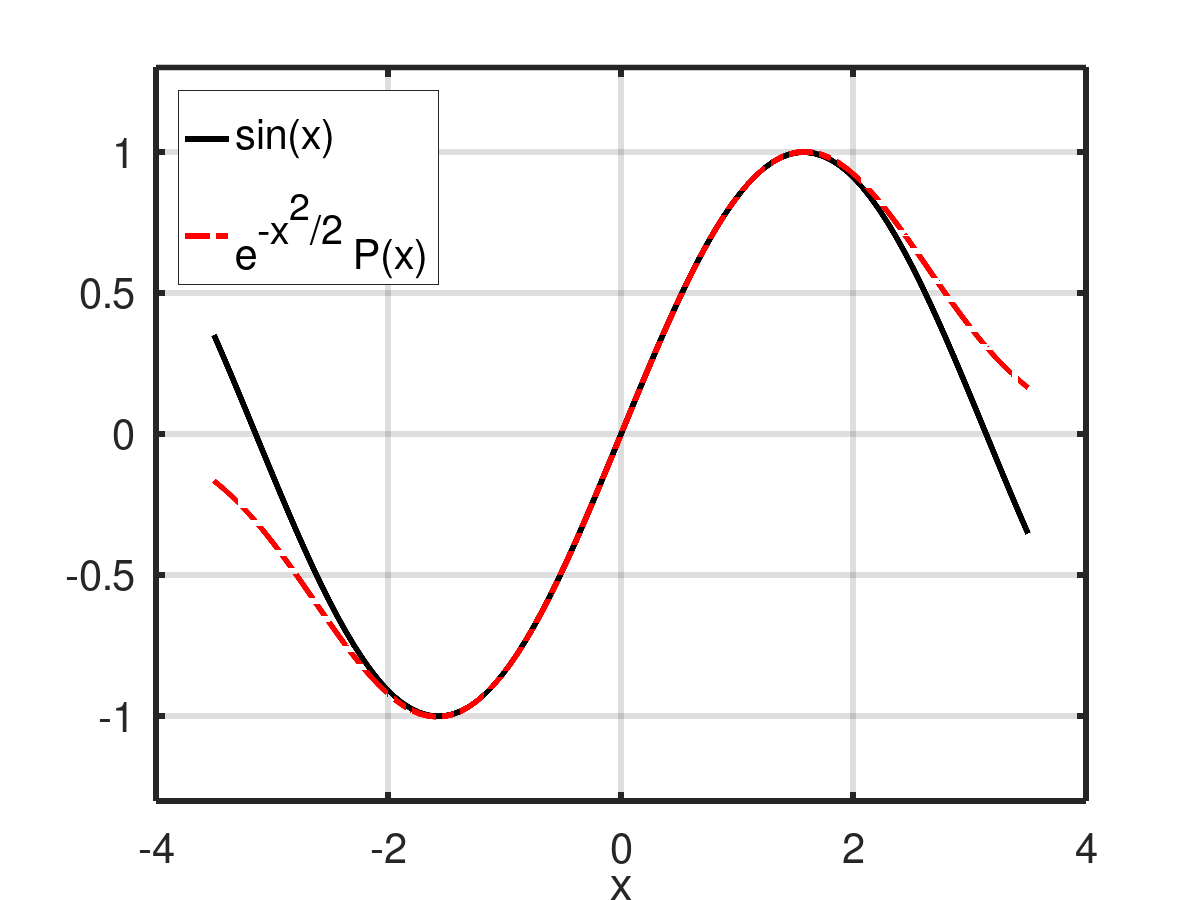} \includegraphics[width=0.45\textwidth]{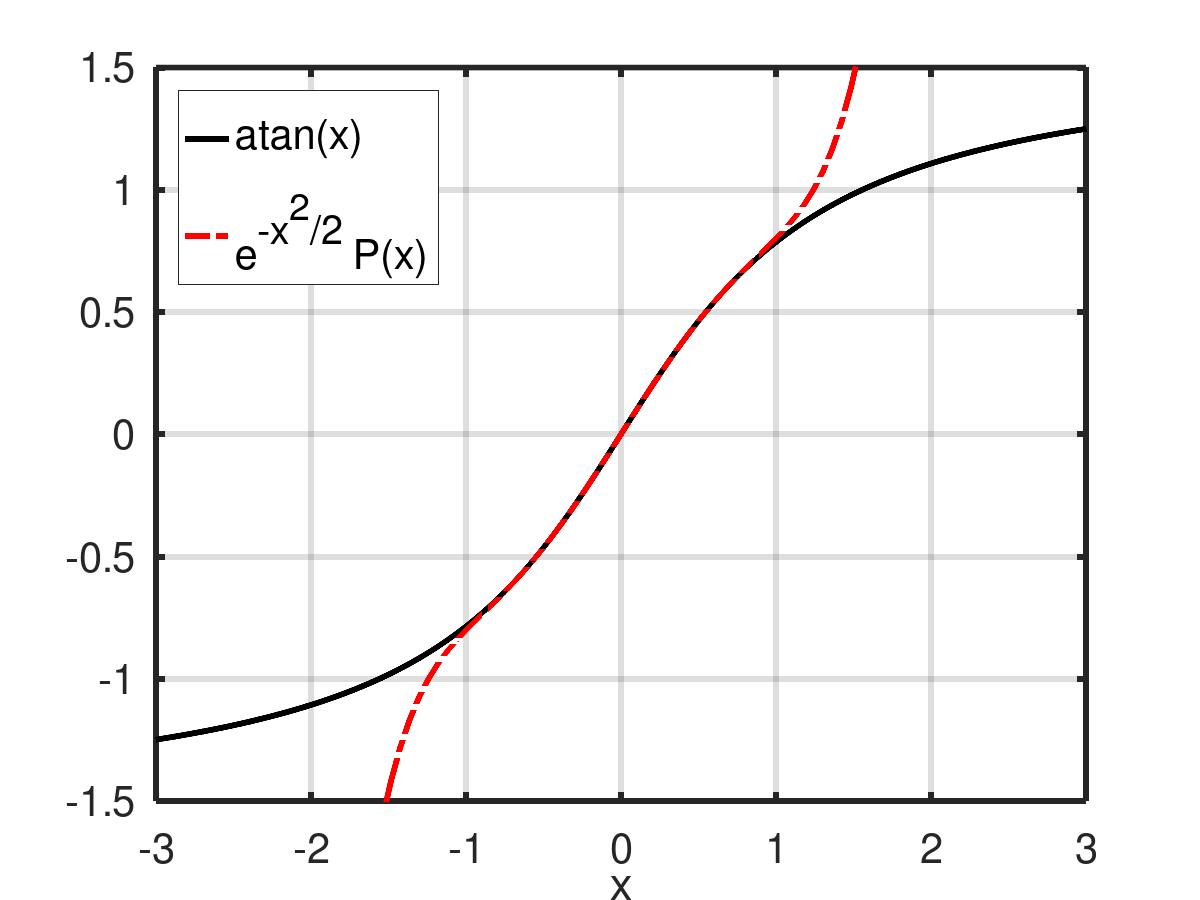}
\par\end{centering}
\caption{Approximating $\sin\left(x\right)$ and $\arctan\left(x\right)$ by
expression (\ref{eq:newExpansion_D1}) with 11 terms, $q=2$ and $w=-1/2$.\label{fig:expPoly}}

\end{figure}

\subsubsection{Expansion into $\sum a_{n}\left[g\left(x\right)\right]^{n},\;g\left(0\right)=0,\;g'\left(0\right)\protect\neq0$\label{subsec:Power-series-of}}

The next derivative-matching expansion we propose
\begin{equation}
\mathcal{A}^{f,D_{2}}\left(x\right)=\sum_{n=0}^{\infty}a_{n}\left[g\left(x\right)\right]^{n}\text{ with }\;g\left(0\right)=0\text{ and }g'\left(0\right)\neq0\label{eq:xLikeExpansion}
\end{equation}
uses a function $g$ which is for small $x\ll1$ similar to $\alpha x$,
$\alpha\neq0$ (and thus invertible on some neighborhood of zero).
It is a triangular approximation because, following the product and
chain differentiation rules, the term $\left[g\left(x\right)\right]^{n}$
becomes non-zero only after $n$ differentiations. We encountered
an example of this type in Sec. \ref{subsec:Powers-of-sines} which
represents a special case with $g\left(x\right)=2\sin\left(x/2\right)$.
Assuming the analyticity of $\mathcal{A}^{f,D_{2}}$ and its convergence
to $f$, $\mathcal{A}^{f,D_{2}}=f$, one can introduce the substitution\footnote{Idea of Lukáš Holka.}
\[
x=g^{-1}\left(y\right)\text{ leading to }f\left(g^{-1}\left(y\right)\right)=\sum_{n=0}^{\infty}a_{n}y^{n}.
\]
Thus $\left\{ a_{n}\right\} $ can be obtained as the Taylor series
coefficients of $f\left(g^{-1}\left(x\right)\right)$
\begin{equation}
a_{n}=\frac{1}{n!}\frac{d^{n}}{dx^{n}}f\left(g^{-1}\left(x\right)\right)|_{x=0}.\label{eq:LukasovVzorec}
\end{equation}
The complicated structure of higher-order derivatives of (\ref{eq:LukasovVzorec})
makes an approach based on a general function $g$ impractical and
one rather searches for feasible special cases.
\begin{prop}
One presumably new special case is represented by
\begin{equation}
\mathcal{A}^{f,D_{3}}\left(x\right)=\sum_{n=0}^{\infty}a_{n}\left[\ln\left(x+1\right)\right]^{n}\label{eq:powOfLogs}
\end{equation}
with 
\begin{equation}
a_{n}=\frac{1}{n!}\sum_{k=0}^{n}c_{k}^{f}b_{n,k},\quad b_{n,k}=\frac{1}{k!}\sum_{i=0}^{k}\left(-1\right)^{i}\binom{k}{i}(k-i)^{\left\llbracket n\right\rrbracket },\label{eq:logPowCoefs}
\end{equation}
where $b_{n,k}$ are the Stirling numbers of the second kind and $a^{\left\llbracket b\right\rrbracket }$
denotes the generalized exponentiation as introduced earlier.
\end{prop}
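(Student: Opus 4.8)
The plan is to specialize the general coefficient formula (\ref{eq:LukasovVzorec}) to the choice $g(x) = \ln(x+1)$, whose inverse is $g^{-1}(y) = e^{y}-1$, and then to recognize the resulting composite derivatives as Stirling numbers of the second kind. Since $g^{-1}(0)=0$, every derivative of $f$ that appears is evaluated at the origin, so it is precisely a characteristic number $c_k^f = f^{(k)}(0)$; this is what makes the final expression depend only on the $c_k^f$.

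First I would expand the outer function $f$ about $0$ and substitute the power series of the inner function, writing
\[
f\!\left(g^{-1}(x)\right) = f(e^{x}-1) = \sum_{k=0}^{\infty}\frac{c_k^f}{k!}\,(e^{x}-1)^{k}.
\]
The key ingredient is the classical exponential generating function of the Stirling numbers of the second kind $S(n,k)$, namely $(e^{x}-1)^{k}/k! = \sum_{n\ge k} S(n,k)\,x^{n}/n!$. Substituting this and interchanging the two summations, the coefficient of $x^{n}$ can be read off directly, giving
\[
a_n = \frac{1}{n!}\frac{d^{n}}{dx^{n}}f(e^{x}-1)\Big|_{x=0} = \frac{1}{n!}\sum_{k=0}^{n} S(n,k)\,c_k^f,
\]
which is exactly the asserted formula (\ref{eq:logPowCoefs}) with $b_{n,k}=S(n,k)$.

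To finish I would verify that the stated closed form for $b_{n,k}$ coincides with the standard explicit expression $S(n,k) = \tfrac{1}{k!}\sum_{i=0}^{k}(-1)^{i}\binom{k}{i}(k-i)^{n}$. The only point requiring attention is the boundary case $n=0$: here the generalized exponentiation $\left\llbracket \cdot \right\rrbracket$ enforces $0^{\left\llbracket 0 \right\rrbracket}=1$, which yields $b_{0,0}=1$ and $b_{0,k}=0$ for $k>0$ (since $\sum_{i=0}^{k}(-1)^{i}\binom{k}{i}=0$), in agreement with $S(0,0)=1$ and $S(0,k)=0$.

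The only genuine subtlety is the interchange of the two infinite sums, which is licensed here by the paper's standing assumption that the appearing objects have the properties needed for such rearrangements (equivalently, that $f\circ g^{-1}$ is analytic at $0$, so its Taylor expansion may be reorganized term by term, consistent with the derivation of (\ref{eq:LukasovVzorec})). An alternative and entirely equivalent route would differentiate $f(e^{x}-1)$ directly via the Fa\`a di Bruno formula and identify the Bell-polynomial coefficients with $S(n,k)$, but the generating-function computation above is shorter and makes the emergence of the Stirling numbers transparent.
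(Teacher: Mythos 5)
Your proof is correct, but it takes a different route from the paper's. The paper applies the Fa\`a di Bruno formula directly, writing the Taylor coefficients of $f\circ g^{-1}$ in terms of the Bell polynomials $B_{n,k}\left(g_{1}^{-1},\ldots,g_{n-k+1}^{-1}\right)$, observing that for $g^{-1}(x)=e^{x}-1$ all derivatives $g_{i>0}^{-1}$ equal $1$, and then invoking the identity $B_{n,k}(1,1,\ldots,1)=S(n,k)$ to land on the Stirling numbers; the generalized exponentiation is brought in only at the end to absorb the $k=0$ boundary term. You instead expand $f$ about the origin, substitute the exponential generating function $\left(e^{x}-1\right)^{k}/k!=\sum_{n\ge k}S(n,k)\,x^{n}/n!$, and interchange the two sums to read off $a_{n}$ --- precisely the alternative you flag in your closing remark, in reverse. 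The two arguments rest on the same combinatorial fact (the EGF of $S(n,k)$ is equivalent to $B_{n,k}(1,\ldots,1)=S(n,k)$), so neither is more general, but your generating-function route makes the appearance of the Stirling numbers more transparent and requires no prior knowledge of Bell-polynomial special values, whereas the paper's route is the one that generalizes naturally to the other choices of $g$ discussed immediately after the proposition (where the Bell-polynomial arguments are no longer all equal to $1$). Your handling of the interchange of summations and of the $n=0$ boundary case via the generalized exponentiation is consistent with the paper's standing assumptions.
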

\begin{proof}
The Faà di Bruno's formula can be written in terms of the power series
of the composed function
\begin{align*}
f\left(g^{-1}\left(x\right)\right) & =c_{0}^{f}+\sum_{n=1}^{\infty}\left[\frac{1}{n!}\sum_{k=1}^{n}c_{k}^{f}B_{n,k}\left(g_{1}^{-1},\ldots,g_{n-k+1}^{-1}\right)\right]x^{n},\\
g_{i}^{-1} & \equiv\frac{d^{i}}{dx^{i}}g^{-1}\left(x\right)|_{x=0},
\end{align*}
where $B_{n,k}$ are the Bell polynomials. We have $g^{-1}\left(x\right)=\exp\left(x\right)-1$
thus 
\[
g_{i>0}^{-1}=1
\]
which leads to (\ref{eq:LukasovVzorec})
\[
a_{n}=\frac{1}{n!}\frac{d^{n}}{dx^{n}}f\left(g^{-1}\left(x\right)\right)|_{x=0}=\frac{1}{n!}\sum_{k=1}^{n}c_{k}^{f}B_{n,k}(1,1,\ldots,1)=\frac{1}{n!}\sum_{k=1}^{n}c_{k}^{f}b_{n,k}.
\]
The generalized exponentiation is used to define the $k=0$ case $b_{n,0}=\delta_{n,0}$
thus obtaining for all $n\geq0$
\[
a_{n}=\frac{1}{n!}\sum_{k=0}^{n}c_{k}^{f}b_{n,k}.
\]
\end{proof}
The existence of various formulas involving the Bell polynomials,
e.g.\footnote{Here $\left[\begin{smallmatrix}n\\
k
\end{smallmatrix}\right]$ denotes the unsigned Stirling numbers of the first kind.} 
\begin{align}
B_{n,k}(0!,1!,\ldots,\left(n-k\right)!) & =\left[\begin{array}{c}
n\\
k
\end{array}\right],\nonumber \\
B_{n,k}(1!,2!,\ldots,\left(n-k+1\right)!) & =\binom{n-1}{k-1}\frac{n!}{k!},\label{eq:threeFormulas}\\
B_{n,k}(1,2,\ldots,n-k+1) & =\binom{n}{k}k^{n-k},\nonumber 
\end{align}
implies the feasibility of the expansion (\ref{eq:xLikeExpansion})
for those function $g^{-1}$ whose higher derivatives appear as arguments
in these special cases. Choosing for example $g\left(x\right)=1-\exp\left(-x\right)$
one has $g^{-1}\left(x\right)=-\ln\left(1-x\right)$ of which the
derivatives (at zero) appear in the first of the three formulas. In
such a scenario $b_{n,k}=\left[\begin{smallmatrix}n\\
k
\end{smallmatrix}\right]$. In the second of the three formulas the factorials are just shifted,
thus leading to $g\left(x\right)=x/\left(x+1\right)$, $g^{-1}\left(x\right)=x/\left(1-x\right)$
and $b_{n,k}=\binom{n-1}{k-1}\frac{n!}{k!}$. Such an expansion
\begin{equation}
\mathcal{A}_{N}^{f,D_{4}}\left(x\right)=a_{0}+\sum_{n=1}^{N}a_{n}\left(\frac{x}{x+1}\right)^{n},\quad a_{n}=\frac{1}{n!}\sum_{k=1}^{n}\binom{n-1}{k-1}\frac{n!}{k!}c_{k}^{f},\label{eq:newPade}
\end{equation}
is actually an interesting one, because it is a rational approximation
of the function $f$. It does not fit the definition of the Padé approximant,
since it is not of a minimal degree, yet it shares many common features
with the latter and in its terminology it would be labeled as diagonal
(numerator has the same degree as the denominator). It has obvious
advantages: the derivative matching is easy, done order by order with
the persistence of coefficients and, in addition, the limit $N\rightarrow\infty$
in formula (\ref{eq:newPade}) represents a natural way of constructing
a full approximation. It has also one unpleasant feature: it has a
single singularity situated on the real axis always at $x=-1$. One
cannot produce more singularities, yet one can arbitrarily shift the
singularity by scaling the argument: For the singularity to be situated
at $x=\alpha$ one builds an approximation $\mathcal{\widetilde{A}}_{N}^{D_{4}}\left(x\right)$
with characteristic numbers $\widetilde{c}_{n}=\left(-\alpha\right)^{n}c_{n}^{f}$
which has singularity at $-1$. Then an approximation of $f$ having
the singularity at $\alpha\neq0$ can be written as $\mathcal{A}_{N}^{f,D_{4},\alpha}\left(x\right)=\mathcal{\widetilde{A}}_{N}^{D_{4}}\left(-x/\alpha\right)$.

The arguments of the Bell polynomial in the last line of (\ref{eq:threeFormulas})
correspond to the derivatives of $g^{-1}\left(x\right)=xe^{x}$ which
is invertible on $\left[-1,+\infty\right)$. The corresponding expansion
is thus build as a power series of the principal branch of the Lambert
$W$ function and is defined on $\left[-1/e,+\infty\right)$.

Several other special cases of Belle polynomial arguments are known
in the literature for which a closed formula exist \cite{QI2020124382,articleBelle2017,WANG2009104,howard1980special}.
These may be used to construct more approximations of the type (\ref{eq:xLikeExpansion}).

The convergence properties of (\ref{eq:powOfLogs}) and (\ref{eq:newPade})
remain an open question. On a computer, expressions of type (\ref{eq:xLikeExpansion})
can be evaluated using a Horner's scheme like approach to increase
efficiency. Examples of the two approximations are shown in Figs.
\ref{fig:logPowers} and \ref{fig:newPade} respectively.
\begin{figure}[t]
\begin{centering}
\includegraphics[width=0.47\linewidth]{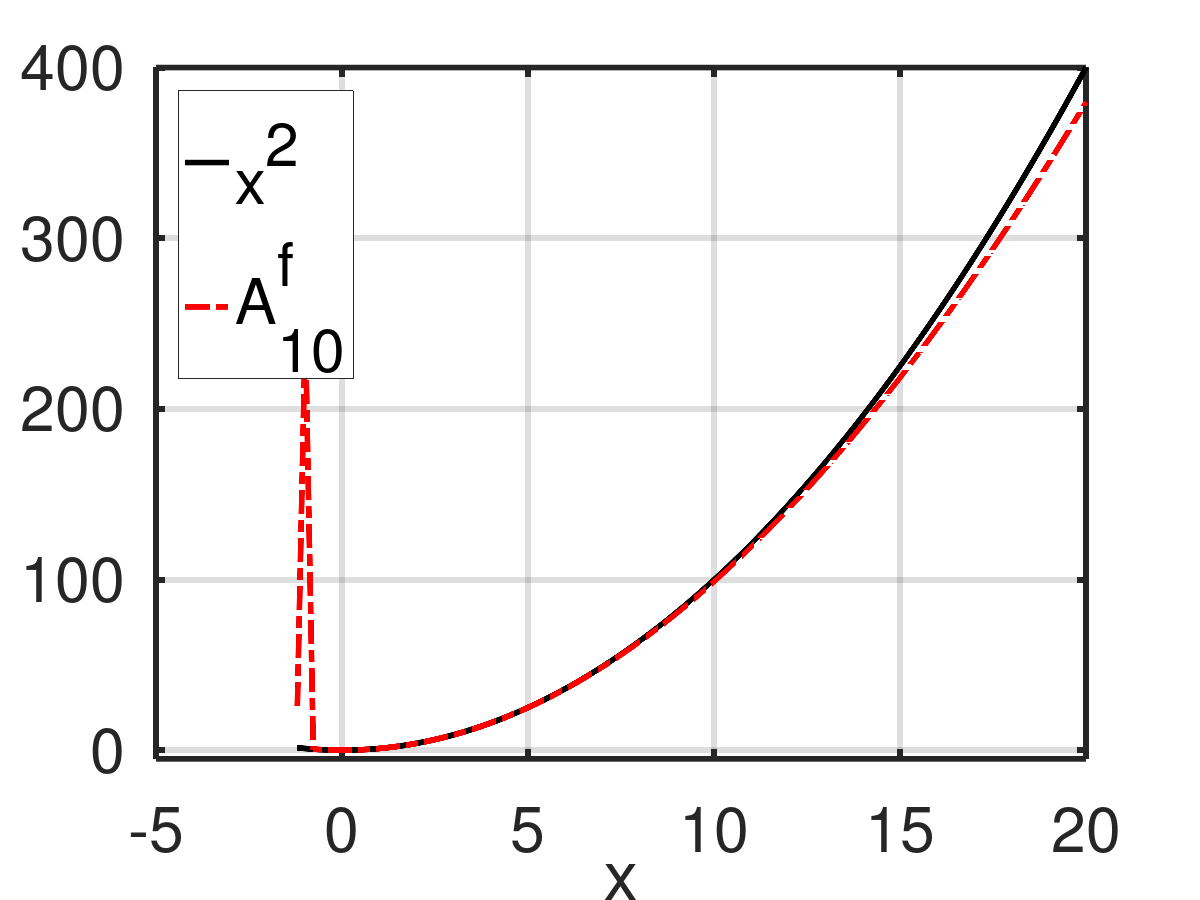} \includegraphics[width=0.47\linewidth]{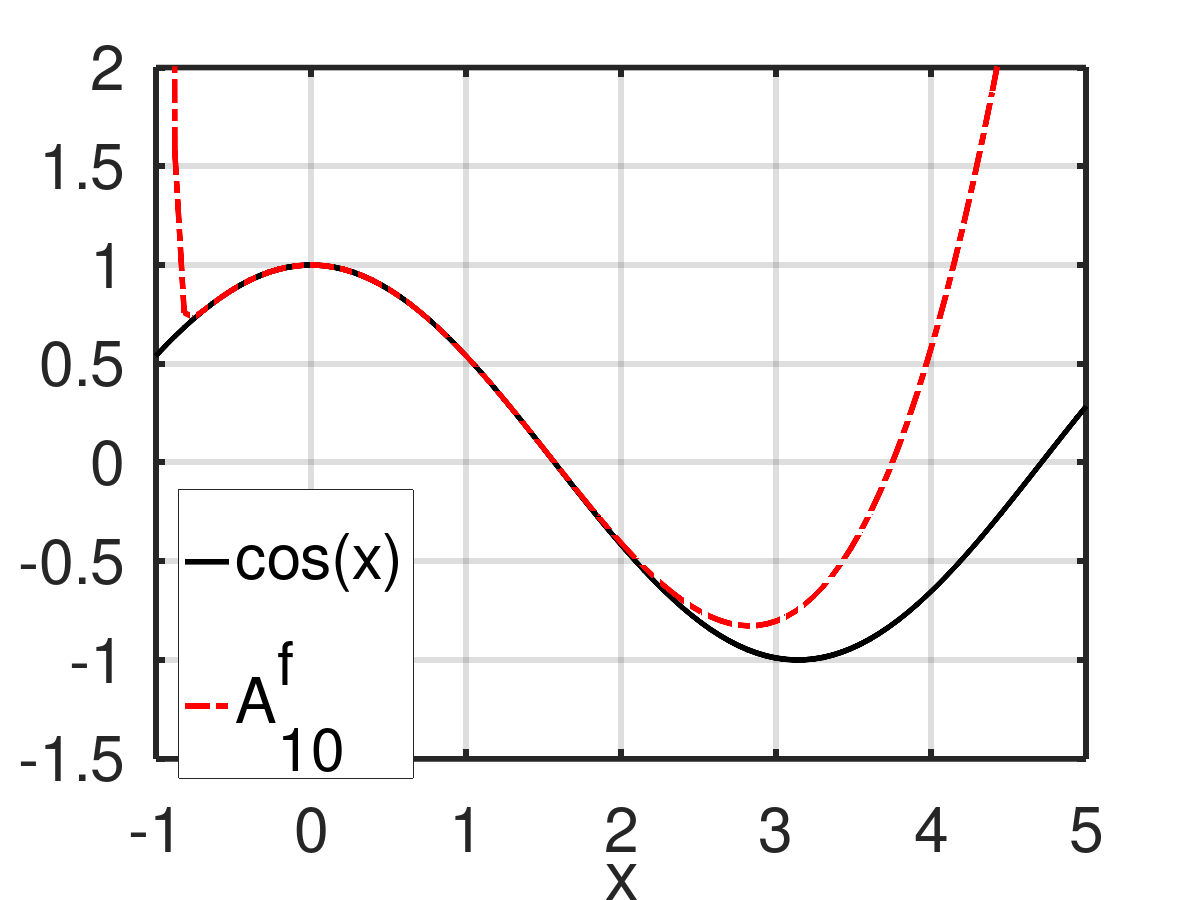}
\par\end{centering}
\begin{centering}
\includegraphics[width=0.47\linewidth]{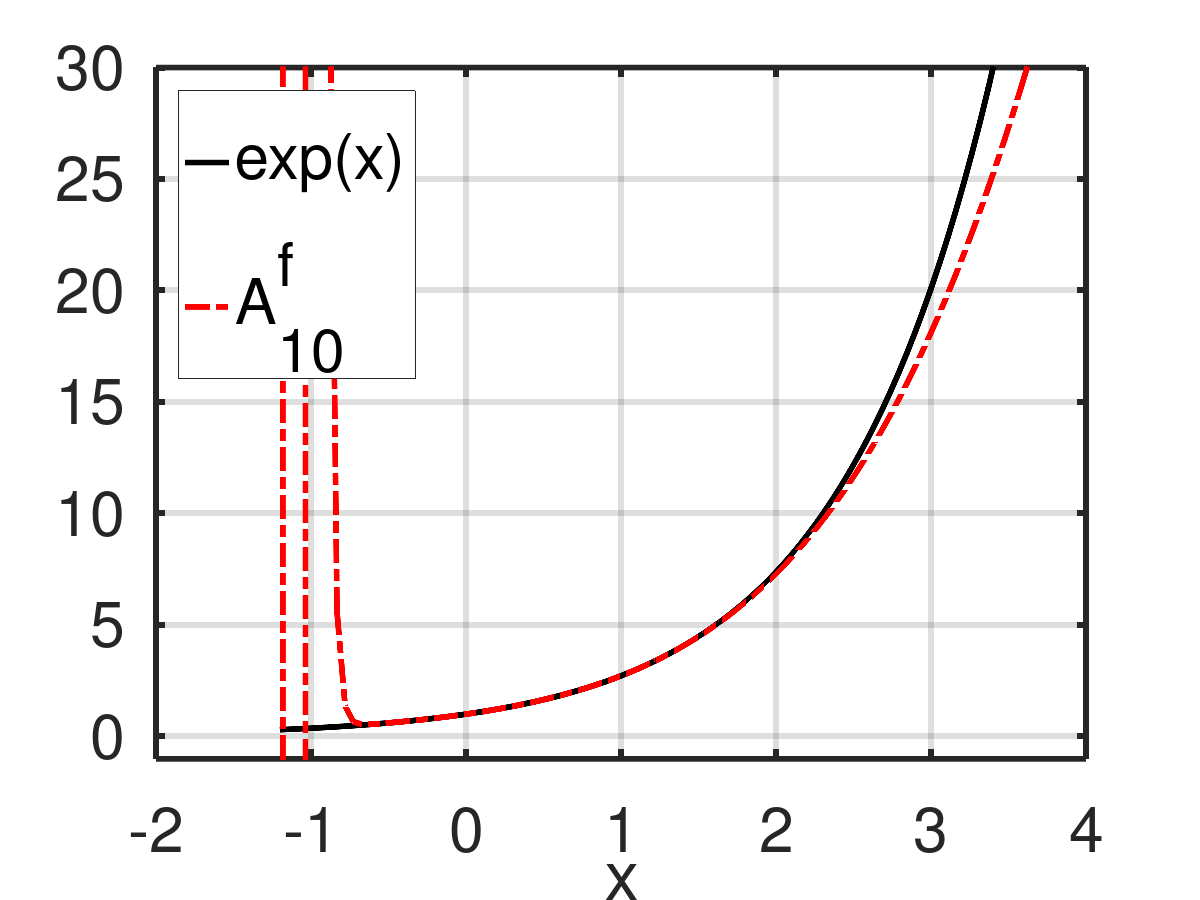} \includegraphics[width=0.47\linewidth]{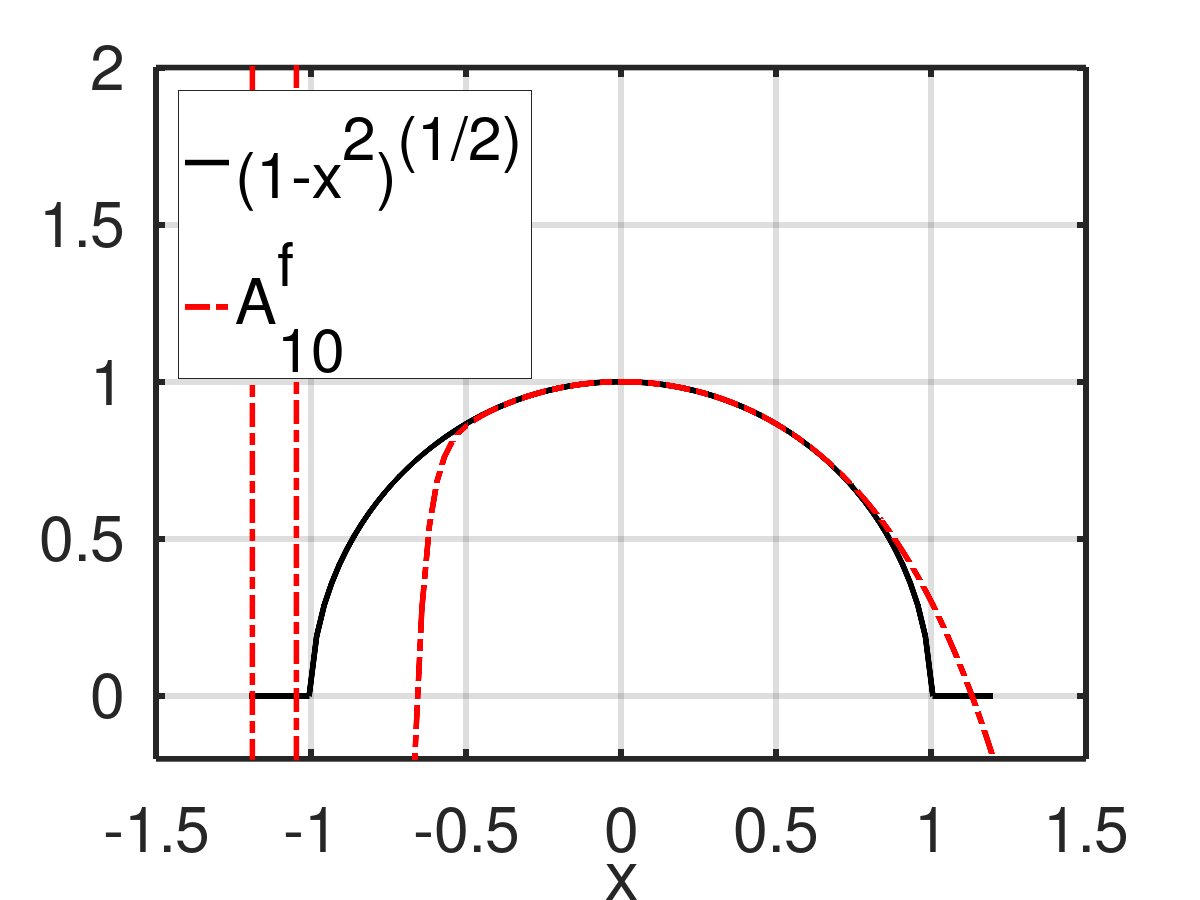}
\par\end{centering}
\caption{Illustrative examples of approximations based on powers of $g\left(x\right)=\ln\left(x+1\right)$
with 11 terms.\label{fig:logPowers}}
\end{figure}
\begin{figure}[t]
\begin{centering}
\includegraphics[width=0.47\linewidth]{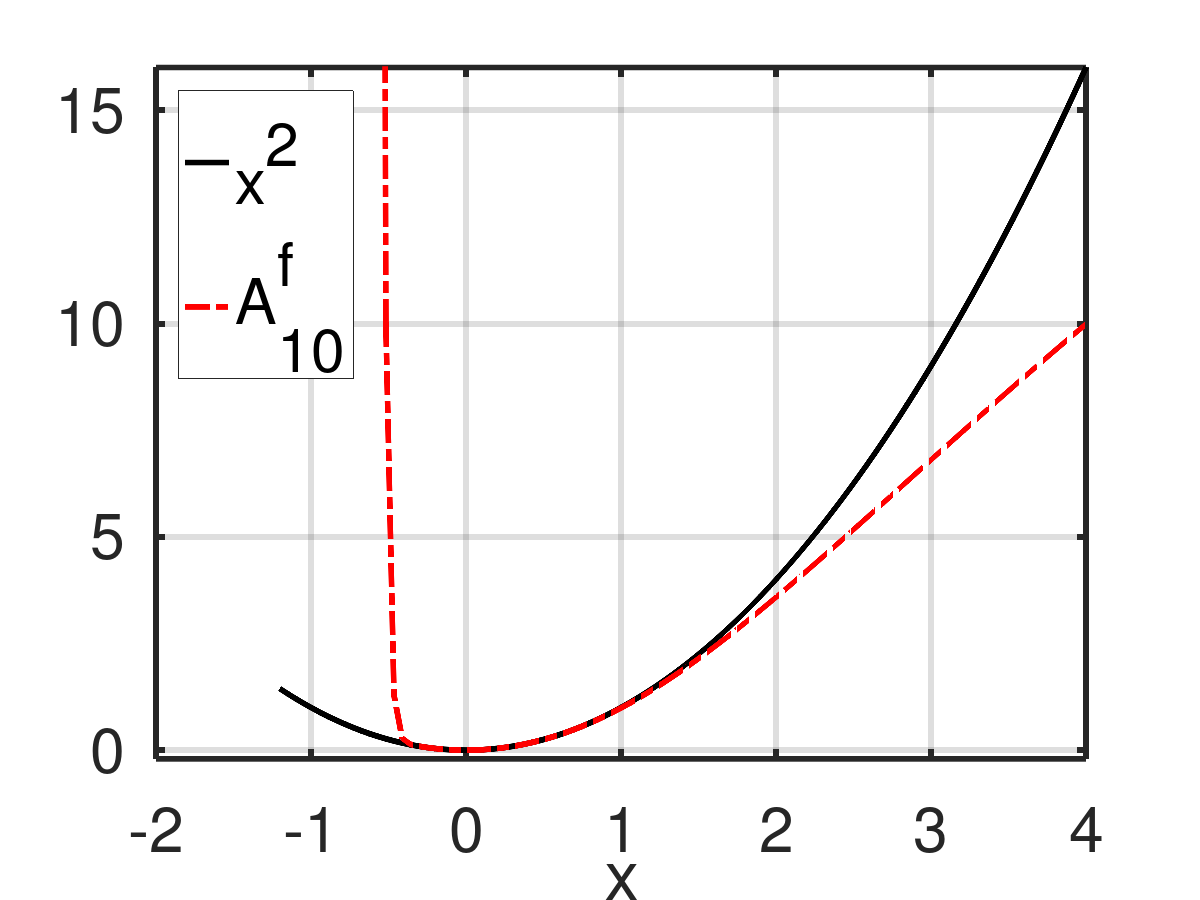} \includegraphics[width=0.47\linewidth]{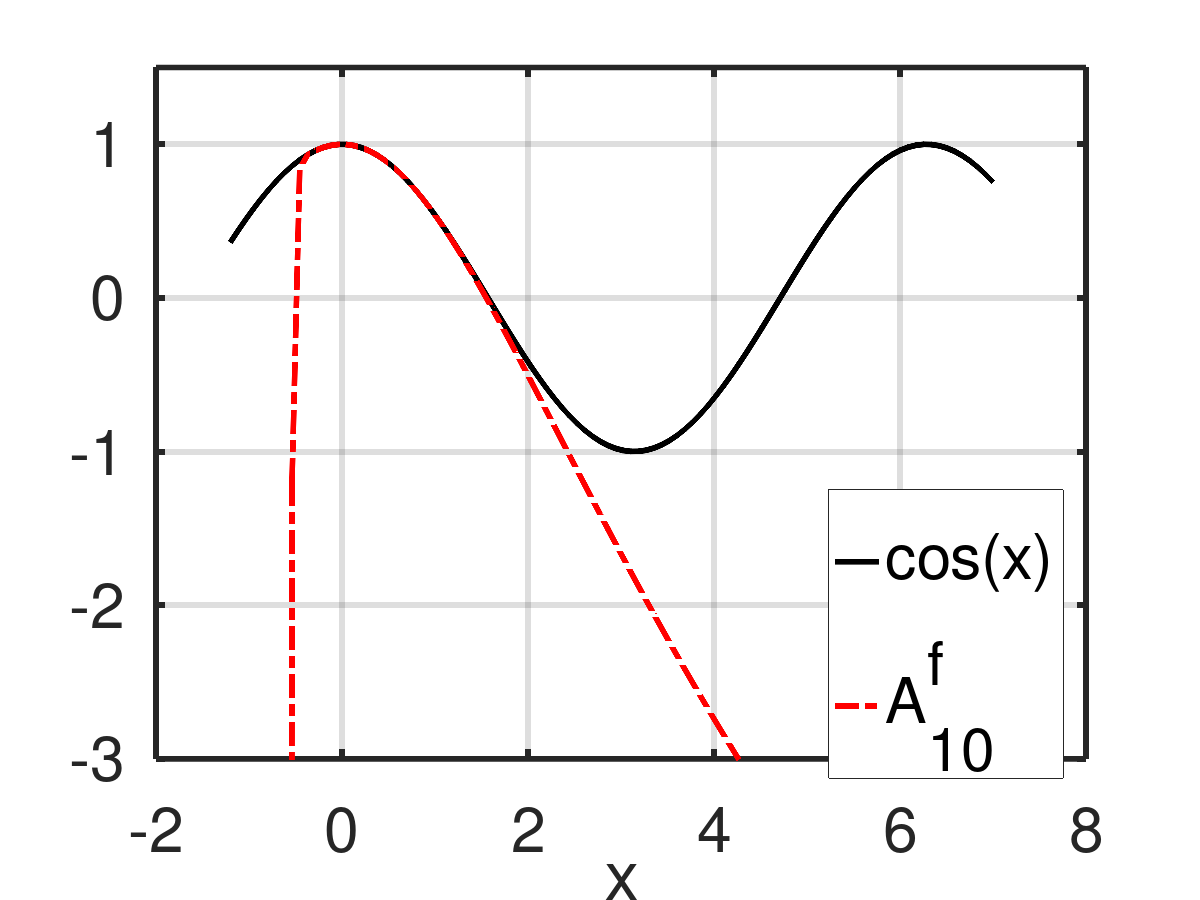}
\par\end{centering}
\begin{centering}
\includegraphics[width=0.47\linewidth]{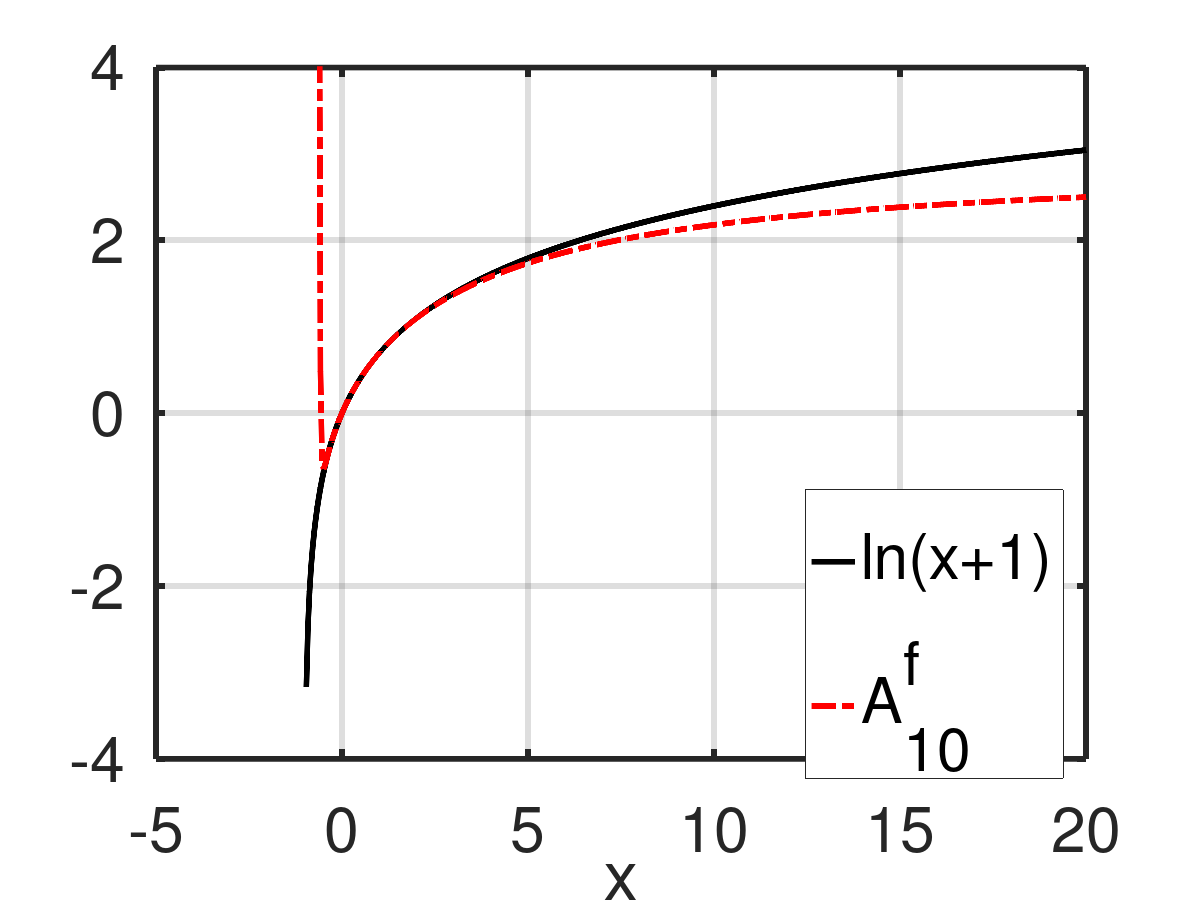} \includegraphics[width=0.47\linewidth]{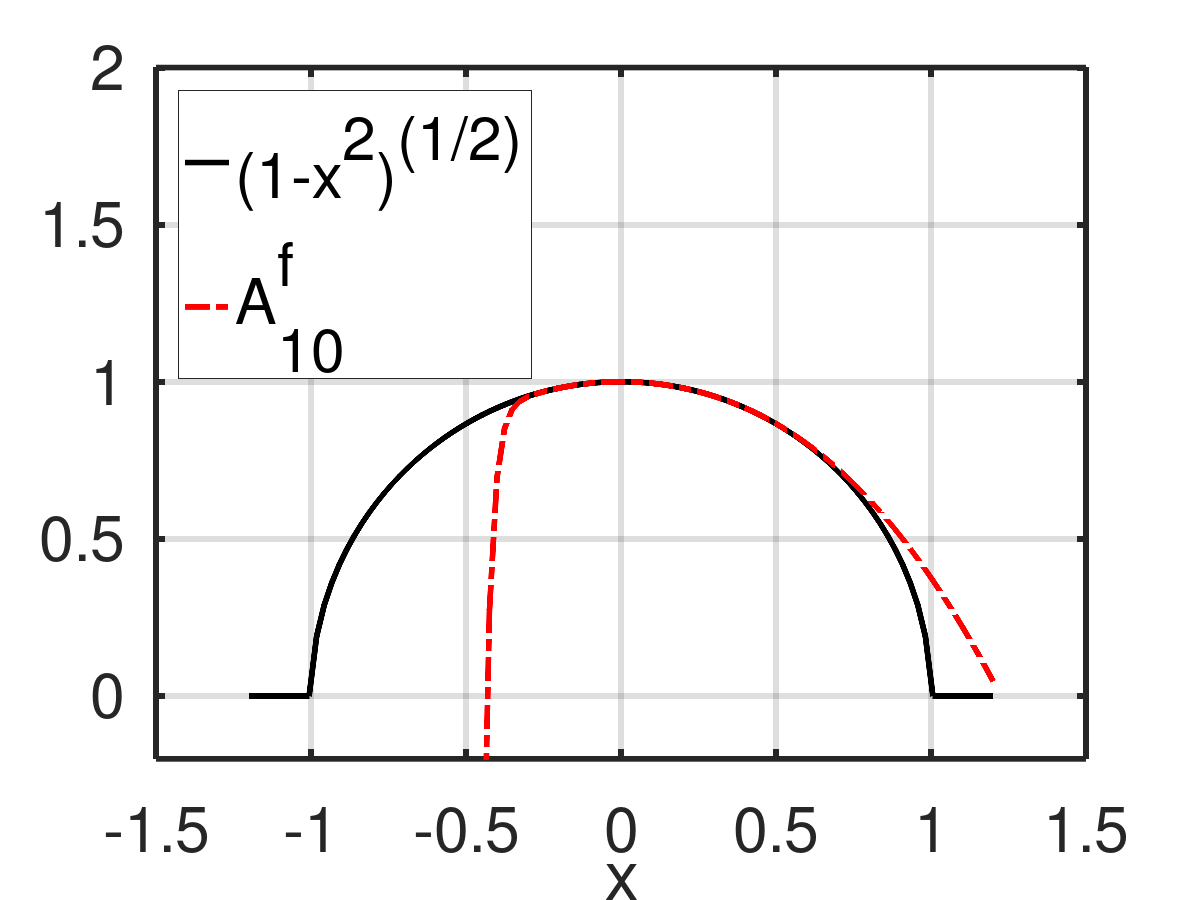}
\par\end{centering}
\caption{Illustrative examples of approximations based on powers of $g\left(x\right)=x/(x+1)$
with 11 terms.\label{fig:newPade}}
\end{figure}

\subsubsection{Expansion into $\sum a_{n}g\left(x^{n}\right),\;g'\left(0\right)\protect\neq0$\label{subsec:g(x^n)}}

An expansion of the form 
\[
\mathcal{A}^{f}\left(x\right)=\sum_{n=0}^{\infty}a_{n}g\left(x^{n}\right)\;\text{with}\;g'\left(0\right)\neq0
\]
is a triangular approximation, which follows from the chain and product
differentiation rules. The exception to the ``triangular'' behavior
is the function value for $g\left(0\right)\neq0$ cases, which requires
a dedicated treatment. Writing
\[
\mathcal{A}^{f}\left(x\right)=b_{0}+\sum_{n=1}^{\infty}a_{n}g\left(x^{n}\right)\equiv b_{0}+\mathcal{A}_{n>0}^{f}\left(x\right),
\]
one can set $b_{0}$ by hand 
\begin{equation}
b_{0}=f(0)-\mathcal{A}_{n>0}^{f}\left(0\right),\label{eq:b0_Def}
\end{equation}
where the derivative (but not value) matching approximation $\mathcal{A}_{n>0}^{f}$
can be easily constructed (as explained hereunder), since for derivatives
($n>0$) the triangular property holds. Having dealt with the $g\left(0\right)\neq0$
issue, we assume from now on a more elegant version of the expansion
with $g\left(0\right)=0$. Then $b_{0}=f\left(0\right)$ and one has
\begin{equation}
\mathcal{A}^{f}\left(x\right)=f\left(0\right)+\sum_{n=1}^{\infty}a_{n}g\left(x^{n}\right)=f\left(0\right)+\mathcal{A}_{n>0}^{f}\left(x\right).\label{eq:mobiusExpansion}
\end{equation}
Independently on the $g\left(0\right)$ value, we are now interested
in the derivative matching procedure for $\mathcal{A}_{n>0}^{f}\left(x\right)$.
Assuming the analyticity of $g$ in the neighborhood of zero 
\[
g\left(x\right)=\sum_{n=0}^{\infty}g_{n}x^{n}
\]
 we proceed with explicit calculations by regrouping the $\mathcal{A}_{n>0}^{f}$
terms by powers of $x$, as expressed in the following table
\begin{center}
\begin{tabular}{c|ccccccc}
\multicolumn{1}{c}{} &  &  &  &  &  &  & \tabularnewline
\hline 
\hline 
\multicolumn{1}{c}{} & $x$ & $x^{2}$ & $x^{3}$ & $x^{4}$ & $x^{5}$ & $x^{6}$ & \tabularnewline
\hline 
$a_{1}g\left(x\right)$ & $a_{1}g_{1}$ & $a_{1}g_{2}$ & $a_{1}g_{3}$ & $a_{1}g_{4}$ & $a_{1}g_{5}$ & $a_{1}g_{6}$ & $\cdots$\tabularnewline
$a_{2}g\left(x^{2}\right)$ & $0$ & $a_{2}g_{1}$ & $0$ & $a_{2}g_{2}$ & $0$ & $a_{2}g_{3}$ & $\cdots$\tabularnewline
$a_{3}g\left(x^{3}\right)$ & $0$ & $0$ & $a_{3}g_{1}$ & $0$ & $0$ & $a_{3}g_{2}$ & $\cdots$\tabularnewline
$a_{4}g\left(x^{4}\right)$ & $0$ & $0$ & $0$ & $a_{4}g_{1}$ & $0$ & $0$ & $\cdots$\tabularnewline
$\vdots$ & $\vdots$ & $\vdots$ & $\vdots$ & $\vdots$ & $\vdots$ & $\vdots$ & $\ddots$\tabularnewline
\hline 
\hline 
\multicolumn{1}{c}{} &  &  &  &  &  &  & \tabularnewline
\end{tabular}.
\par\end{center}

\noindent \begin{flushleft}
The factors of $x^{n}$ terms are arranged in columns, rows contain
contributions from individual summands of $\mathcal{A}_{n>0}^{f}\left(x\right)$.
By plugging different powers of $x$ (shown in different rows) into
the argument of $g$ one observes different spacing between non-zero
terms. There are no zeros in the first row, in the second row the
terms are separated by one zero, in the next row the spacing is two,
and so on. It is a version of the sieve of Eratosthenes where the
$x^{n}$ monomial is multiplied by all such $a_{i}g_{j}$ where $i$
or $j$ divides $n$ with $i\times j=n$. This is written
\[
\mathcal{A}_{n>0}^{f}\left(x\right)=\sum_{n=1}^{\infty}(\sum_{k=1}^{n}\delta_{n\%k,0}a_{k}g_{\frac{n}{k}})x^{n}\equiv\sum_{n=1}^{\infty}(\sum_{k|n}^{n}a_{k}g_{\frac{n}{k}})x^{n}.
\]
Assuming also the analyticity of $f$
\[
f\left(x\right)=\sum_{n=0}^{\infty}f_{n}x^{n}
\]
 and the convergence of the approximation $\mathcal{A}^{f}=f$ one
can compare the coefficients and write
\[
f_{n}=\sum_{k|n}^{n}a_{k}g_{\frac{n}{k}}.
\]
Considering $f_{n}$ and $g_{n}$ as fixed, we are interested in finding
the inverse relation which allows to expresses $a_{n}$ as their function.
The solution is known to be the inverse with respect to the Dirichlet
convolution, the latter being defined on two arithmetic function (or
sequences) as
\[
\left(u*v\right)_{n}=\sum_{k|n}^{n}u_{k}v_{\frac{n}{k}}.
\]
The definition of the inverse (denoted by $u^{-1}$) stands
\[
\left(u*u^{-1}\right)_{n>0}=\delta_{1,n},
\]
where, the existence of $u^{-1}$ requires the assumption $u_{1}\neq0\neq u_{1}^{-1}$.
With $u_{n}\equiv g_{n}$ it implies $g'\left(0\right)\neq0$. Thus
we know how to construct the expansion coefficients in (\ref{eq:mobiusExpansion})
\[
a_{n}=\sum_{k|n}\left(g^{-1}\right)_{k}f_{\frac{n}{k}},
\]
i.e. the expansion coefficients of $\mathcal{A}^{f}$ involve the
Dirichlet inverse of the power-expansion coefficients of $g$ (and
vice versa). Each pair of such sequences $\left\{ u_{n},\:\left(u^{-1}\right)_{n}\right\} $
can be used to construct two different approximation forms. One of
them uses $u_{n}$ to define $g$ (i.e. $g_{n}\equiv u_{n}$) and
$\left(u^{-1}\right)_{n}$ to define $a_{n}$, the other does the
opposite. Looking in the literature for known examples of Dirichlet
inverses we propose two of them.
\par\end{flushleft}
\begin{enumerate}
\item Sequences $u_{n}=1$ and $\left(u^{-1}\right)_{n}=\mu_{n}$ where
$\mu_{n}$ denotes the Möbius function. One then gets the two following
expansions
\begin{equation}
\mathcal{A}^{f}\left(x\right)=f\left(0\right)+\sum_{n=1}^{\infty}a_{n}G\left(x^{n}\right),\:a_{n}=\sum_{k|n}f_{\frac{n}{k}},\label{eq:powInArg_G}
\end{equation}
where $G$ is the ordinary generating function of the Möbius function\footnote{As seen form the upper bound on (\ref{eq:defOrdGenFnMob}) provided
by the geometric series $\sum_{n=1}^{\infty}\left|\mu\left(n\right)x^{n}\right|\leq\sum_{n=1}^{\infty}\left|x^{n}\right|$,
the definition of $G$ is absolutely convergent on $\left(-1,1\right)$.} 
\begin{equation}
G\left(x\right)\equiv\sum_{n=1}^{\infty}\mu_{n}x^{n}\label{eq:defOrdGenFnMob}
\end{equation}
and
\begin{equation}
\mathcal{A}^{f}\left(x\right)=b_{0}+\sum_{n=1}^{\infty}a_{n}\frac{1}{1-x^{n}},\:a_{n}=\sum_{k|n}\mu_{k}f_{\frac{n}{k}},\label{eq:powInArg_Rat1}
\end{equation}
with $b_{0}$ given by (\ref{eq:b0_Def}).
\item Sequences 
\[
u_{n}=\sin\left(n\pi/2\right)
\]
and
\[
\left(u^{-1}\right)_{n}\equiv\nu_{n}=\begin{cases}
\left(-1\right)^{\sum_{p|n}\frac{p+1}{2}} & \text{ for square-free odd }n\\
0 & \text{ else}
\end{cases},
\]
where $p$ denotes prime numbers. The proof \cite{nemes_2021} uses
the fact that $u$ represents the Dirichlet-series coefficients of
the Dirichlet beta function, which can be expressed as an Euler product
and thus easily inverted (with respect to the multiplication). The
$u^{-1}$ sequence is then given by the Dirichlet series coefficients
of the latter. We chose to present only one of the two expansions
\begin{equation}
\mathcal{A}^{f}\left(x\right)=f\left(0\right)+\sum_{n=1}^{\infty}a_{n}\frac{x^{n}}{x^{2n}+1},\:a_{n}=\sum_{k|n}\nu_{k}f_{\frac{n}{k}},\label{eq:powInArg_Rat2}
\end{equation}
the remaining variant seems not to be very interesting.
\end{enumerate}
In all cases the obvious relation to the characteristic numbers is
$f_{m}=c_{m}^{f}/m!$. The shape of the function $G\left(x\right)$
and approximations of the exponential and sine functions with formulas
(\ref{eq:powInArg_G}), (\ref{eq:powInArg_Rat1}) and (\ref{eq:powInArg_Rat2})
are shown in Fig. \ref{Fig:inArgPow}.
\begin{figure}[t]
\begin{centering}
\includegraphics[width=0.45\textwidth]{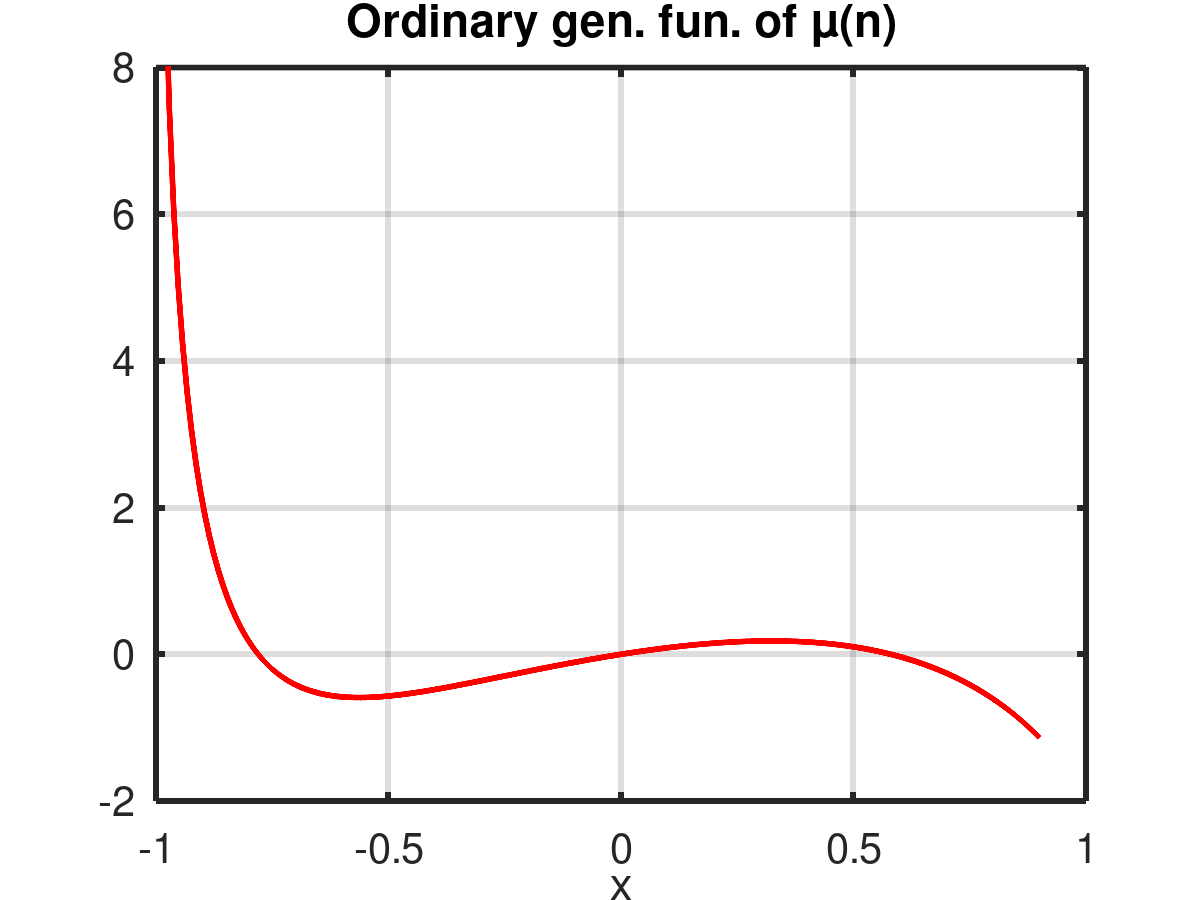}\hspace{0.5cm}\includegraphics[width=0.45\textwidth]{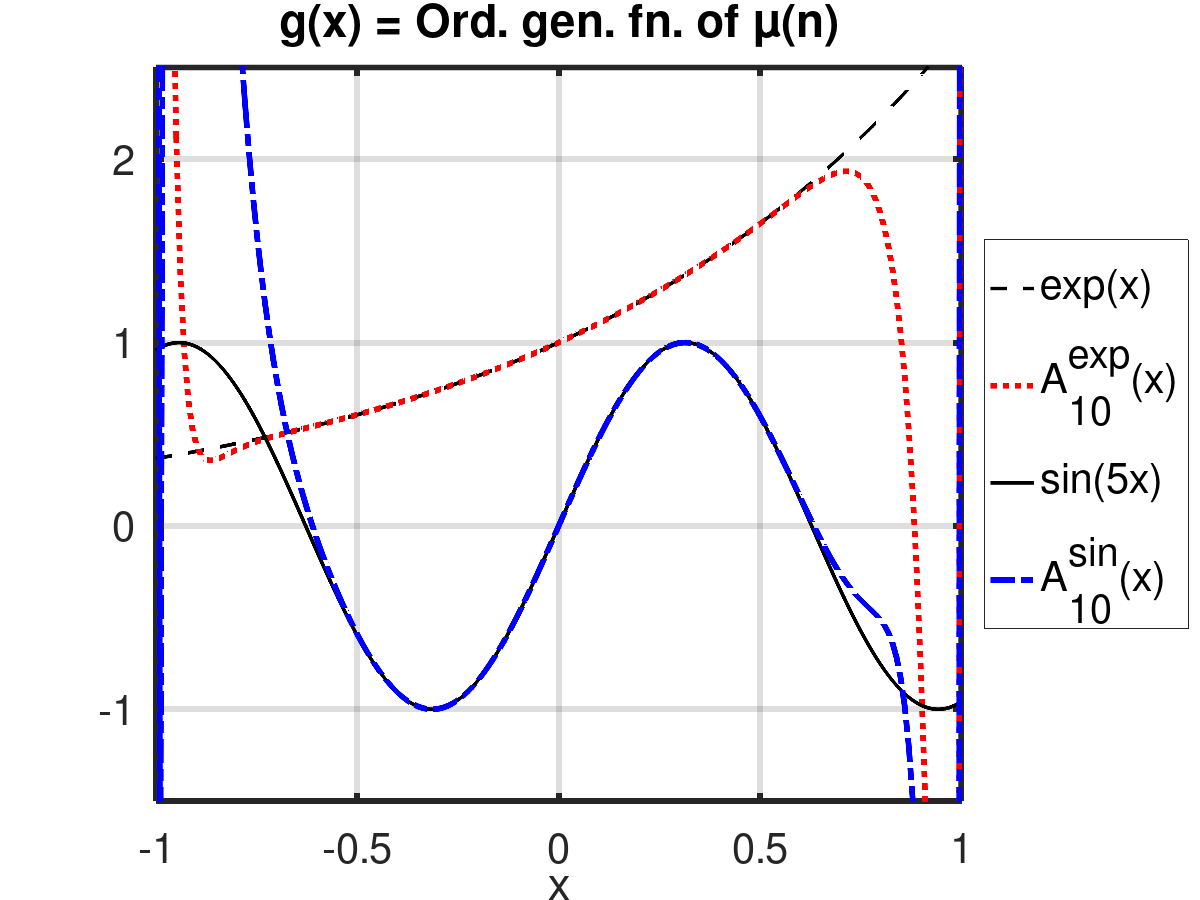}
\par\end{centering}
\begin{centering}
(a)\hspace{5cm}(b)
\par\end{centering}
\vspace{0.5cm}

\begin{centering}
\includegraphics[width=0.45\textwidth]{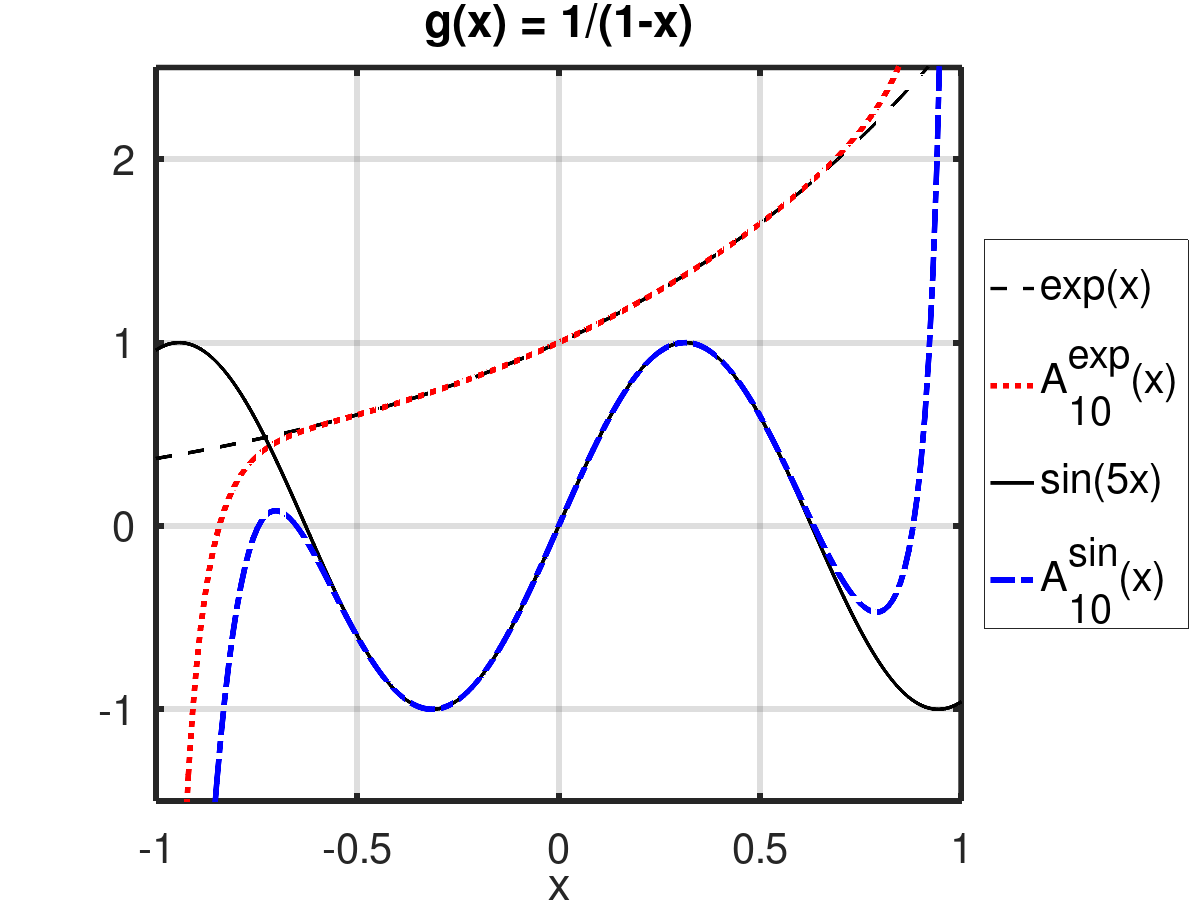}\hspace{0.5cm}\includegraphics[width=0.45\textwidth]{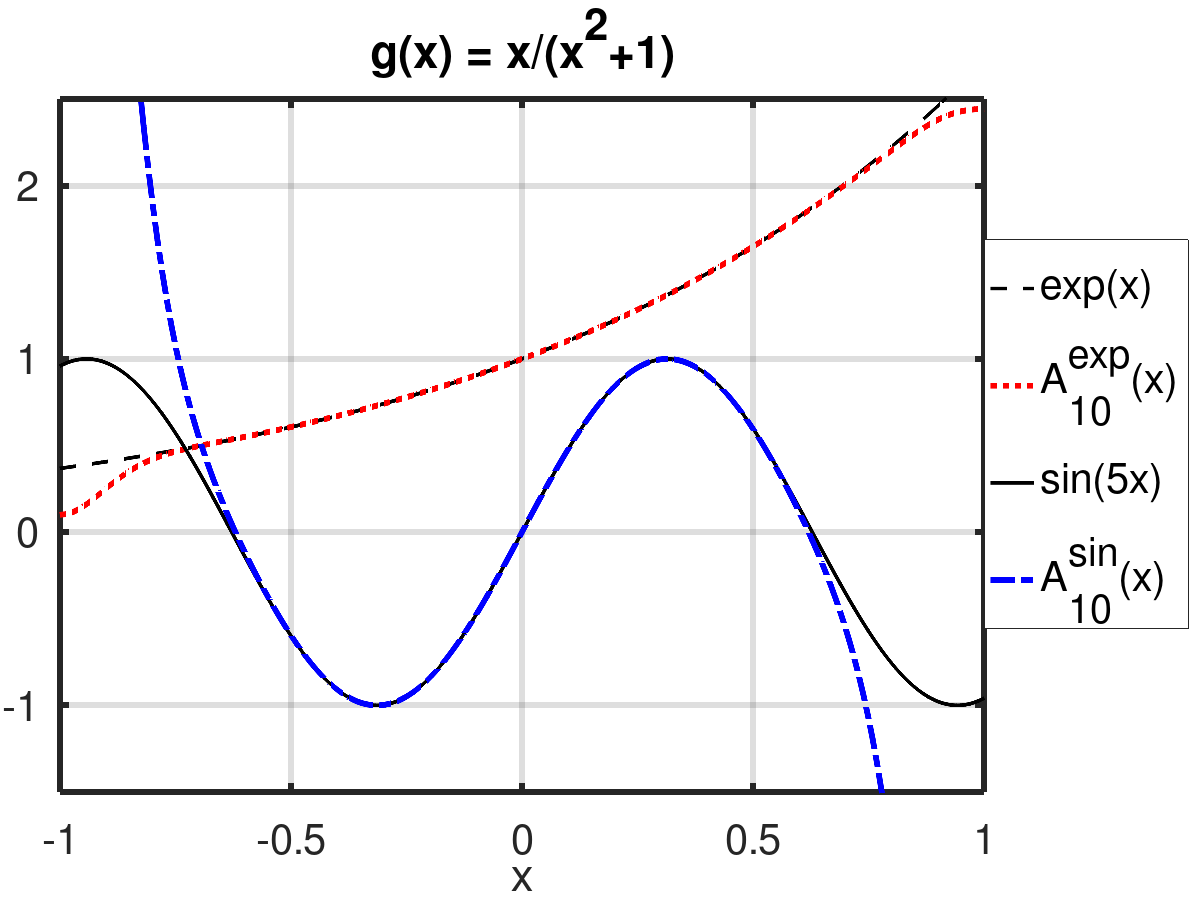}
\par\end{centering}
\begin{centering}
(c)\hspace{5cm}(d)\caption{Ordinary generating function of the Möbius function $G\left(x\right)$
on sub-figure (a) and approximations of $\exp\left(x\right)$ and
$\sin\left(5x\right)$ by $\sum_{n}^{10}a_{n}g\left(x^{n}\right)$
with $g(x)=G\left(x\right)$, $g(x)=1/\left(1-x\right)$ and $g(x)=x/\left(x^{2}+1\right)$
on sub-figures b), c) and d) respectively.}
\label{Fig:inArgPow}
\par\end{centering}
\end{figure}

The expressions (\ref{eq:powInArg_Rat1}) and (\ref{eq:powInArg_Rat2})
represent presumably new rational expansions which have singularities
situated on the unit circle in the complex plane. The positions of
the singularities of (\ref{eq:powInArg_Rat1}) correspond to all roots
of unity and thus the expression in general diverges\footnote{Unless all $a_{n}$ with even $n$ are zero, in which case it may
converge for $x=-1$.} for $x=\pm1$. The singularities of (\ref{eq:powInArg_Rat2}) are
given by even roots of $-1$, implying the individual summands being
well defined for $x=\pm1$. One can modify the radius of the circle
on which the singularities are situated by substitution, the procedure
is identical to the one described in Sec. \ref{subsec:Power-series-of}.
The domain of convergence $\mathcal{A}^{f}\rightarrow f$ remains
an open question, for various elementary functions numerical computations
suggest the interval $\left(-1,1\right)$.

\subsubsection{Decomposed exponential}

Another derivative-matching approximation can be constructed by considering
functions which form a closed ring when differentiated
\[
\left\{ h_{n}\right\} _{n=0}^{N-1}:\quad h_{n}^{'}=h_{n+1},\;h_{N-1}^{'}=h_{0},
\]
where $N$ is fixed. If $h_{i}$ should play the role of approximation
building blocks, their mutual linear independence is suitable. This
is however not automatically satisfied (consider the function $\sin(x)$
and the corresponding derivative ring), yet, from the theory of linear
differential equations we know the equation 
\[
h^{\left(N\right)}\left(x\right)=h\left(x\right)
\]
 allows for $N$ independent solutions. In search of them one can
use the fact that the derivative ring as whole (i.e. summed) is derivative-invariant
and thus has to be proportional to the exponential function
\[
\sum_{n=0}^{N-1}h_{n}\left(x\right)=\alpha\exp\left(x\right),
\]
from where the idea to decompose and re-arrange the power expansion
of the latter
\[
\text{dex}_{\left[N,n\right]}\left(x\right)=\sum_{k=0}^{\infty}\frac{1}{\left(n+kN\right)!}x^{n+kN},\quad0\leq n<N.
\]
Convergence properties of this definition are easy to asses: on the
positive real axis all $\text{dex}$ function expansions take the
form of a sum of positive numbers majorated by the exponential and
thus necessarily convergent. Since the convergence domain is a disk
around the expansion point, all functions converge in the whole complex
plane.

One recovers some known functions
\[
\text{dex}_{\left[1,0\right]}\left(x\right)=\exp\left(x\right),\quad\text{dex}_{\left[2,0\right]}\left(x\right)=\cosh\left(x\right),\quad\text{dex}_{\left[2,1\right]}\left(x\right)=\sinh\left(x\right).
\]
The $\text{dex}$ functions have an important property: only the first
in the series $\text{dex}_{\left[N,0\right]}$ is non-zero at zero,
all others take by definition a zero value. Because of the ring property
(each member becomes its neighbor when differentiated), the differentiation
makes the only non-zero member shift along the ring. Thus, considering
the value and first $N-1$ derivatives $\left\{ c_{n}^{f}\right\} _{n=0}^{N-1}$
of a function, an approximation can be easily constructed as
\begin{equation}
\mathcal{A}_{N}^{f}\left(x\right)=\sum_{n=0}^{N-1}c_{n}^{f}\text{dex}_{\left[N,n\right]}\left(x\right).\label{eq:derRingApprox}
\end{equation}
The formula can be of course shifted to an arbitrary point by shifting
the argument. The expression (\ref{eq:derRingApprox}) is to be used
as a partial approximation, it suits situations where the number of
derivatives to match is fixed and given in advance because going to
a higher approximation order means using a different set of the $\text{dex}$
functions. The derivatives of (\ref{eq:derRingApprox}) are by construction
cyclic with period $N$. The convergence to the expanded function
can be studied in the limit $N\rightarrow\infty$ where, as a special
case, one obtains the Taylor series whose convergence properties are
well understood.

As a curiosity, one can define a function (see Fig. \ref{PprimePic})
\begin{figure}
\centering{}\includegraphics[width=0.6\textwidth]{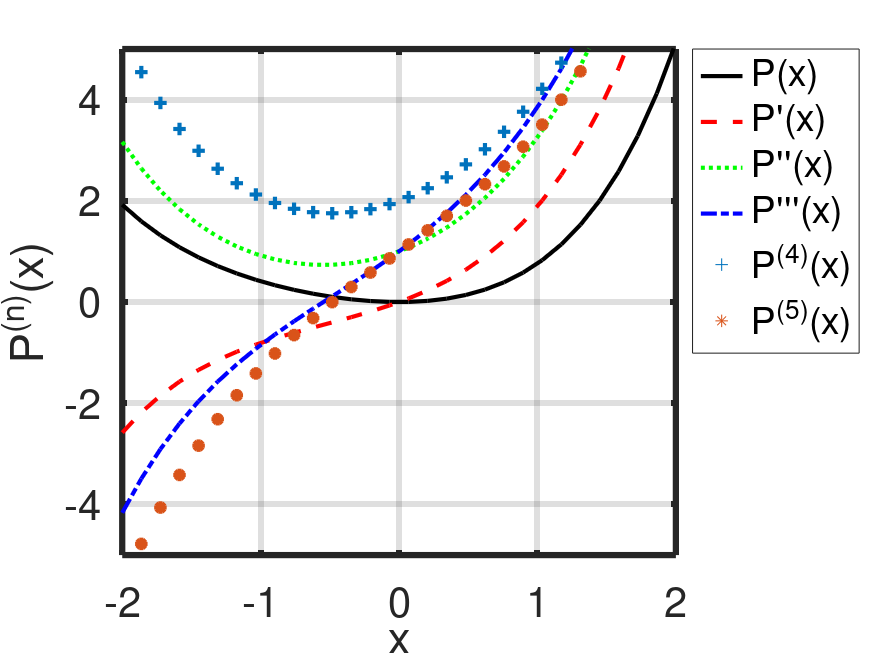}\caption{$P\left(x\right)$ function with its derivatives (see the text).}
\label{PprimePic}
\end{figure}
\[
P\left(x\right)=\sum_{i=2}^{\infty}\left(\text{dex}_{\left[i,0\right]}-1\right),
\]
where the summation is performed in the first index. It is a realization
of the sieve of Eratosthenes, one has
\[
p\text{ is prime}\Leftrightarrow\frac{d^{p}}{dx^{p}}P\left(x\right)|_{x=0}=1.
\]

\subsection{Non-linear approximation example}

\begin{figure}[t]
\begin{centering}
\includegraphics[width=0.45\linewidth]{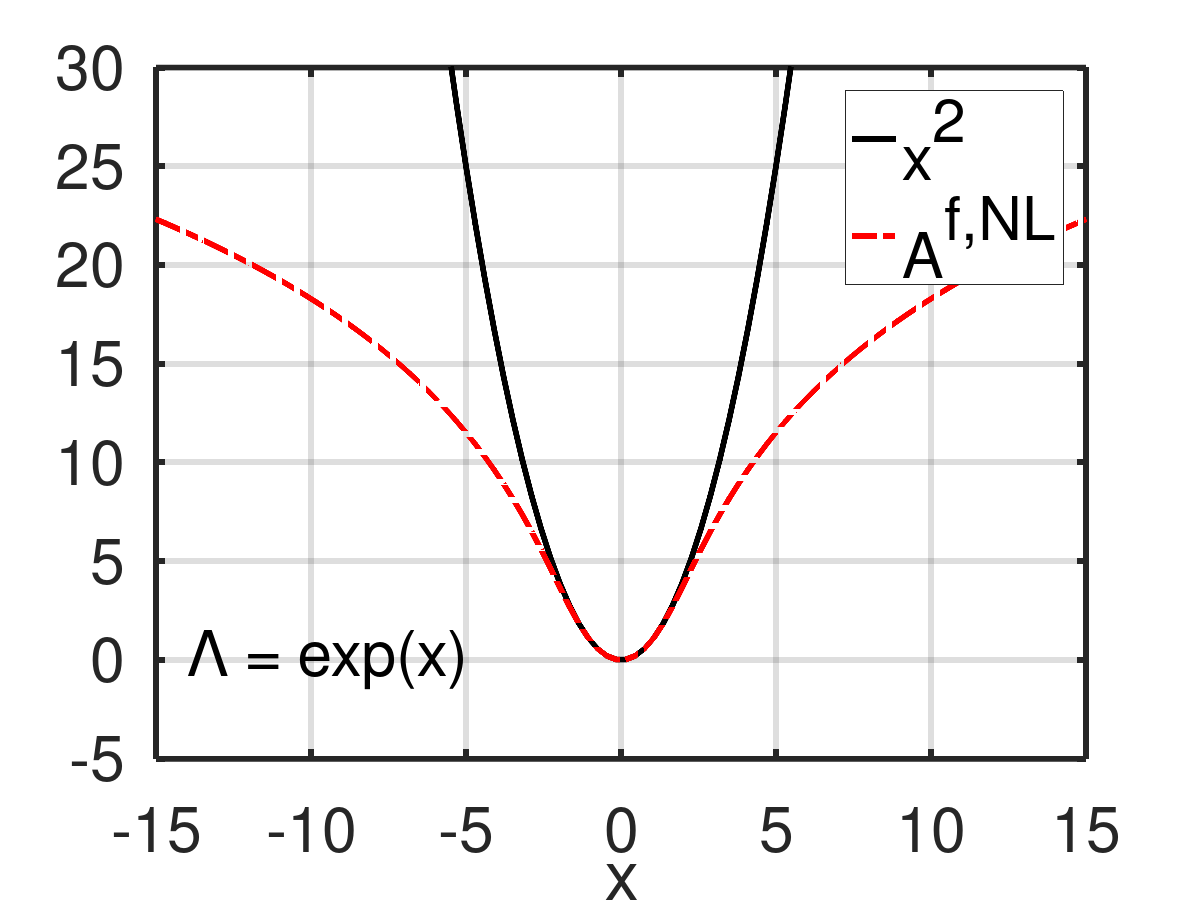}\enskip{}\includegraphics[width=0.45\linewidth]{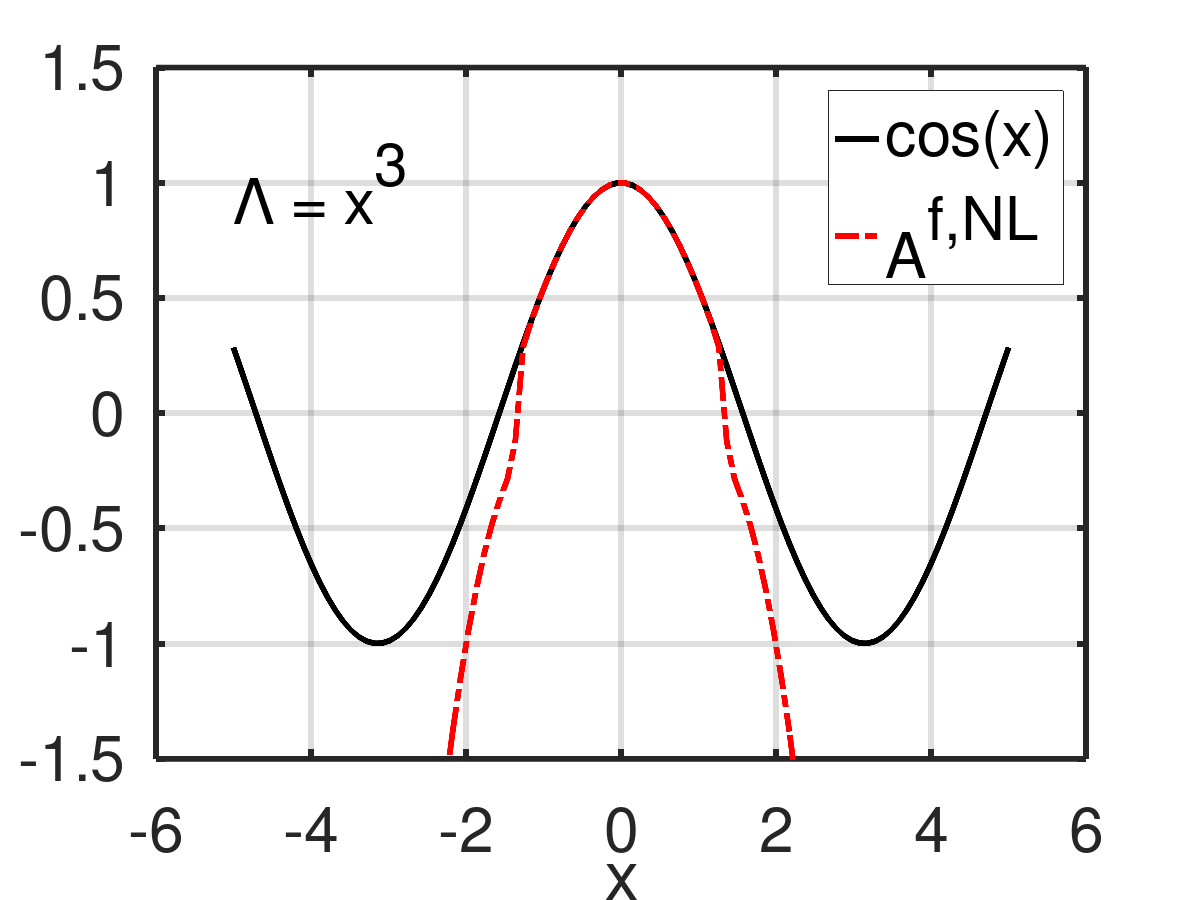}\\
\includegraphics[width=0.45\linewidth]{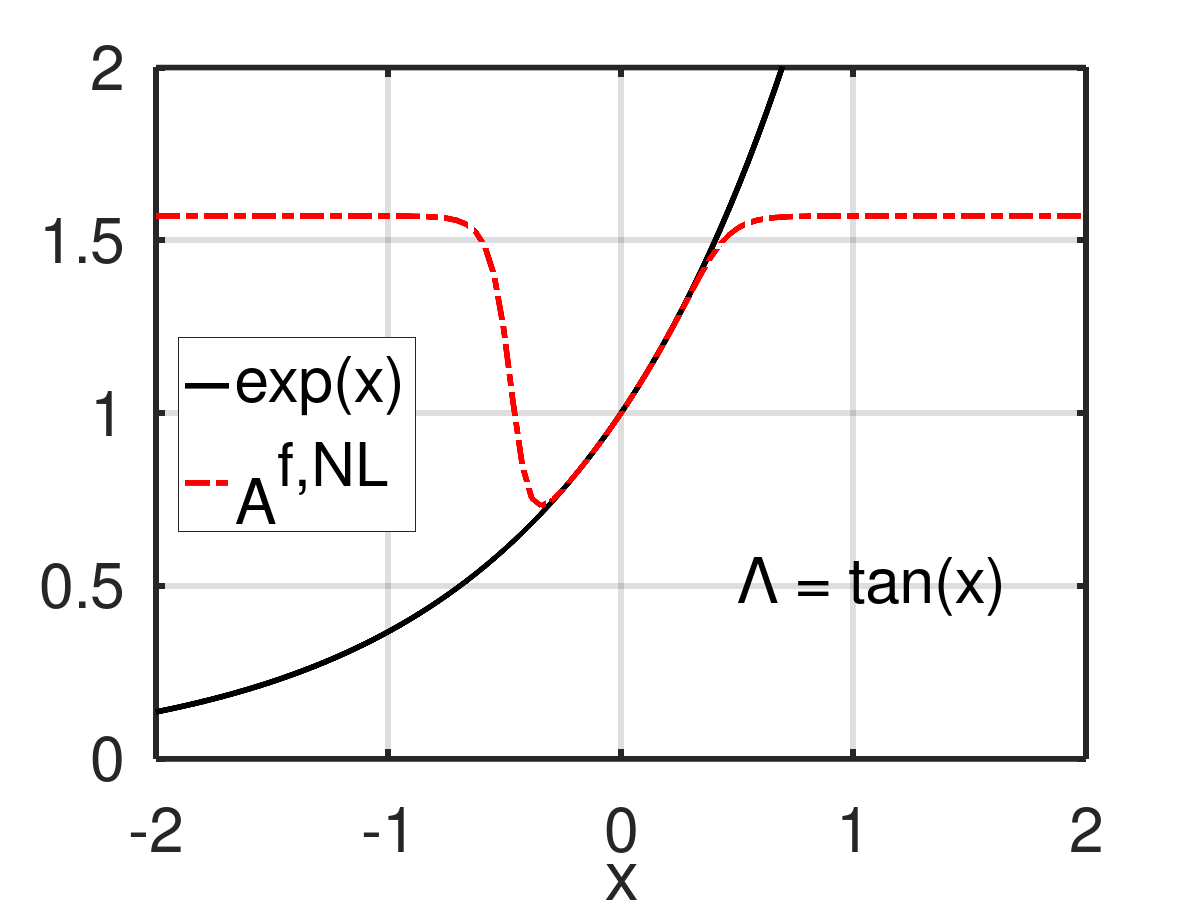}\enskip{}\includegraphics[width=0.45\linewidth]{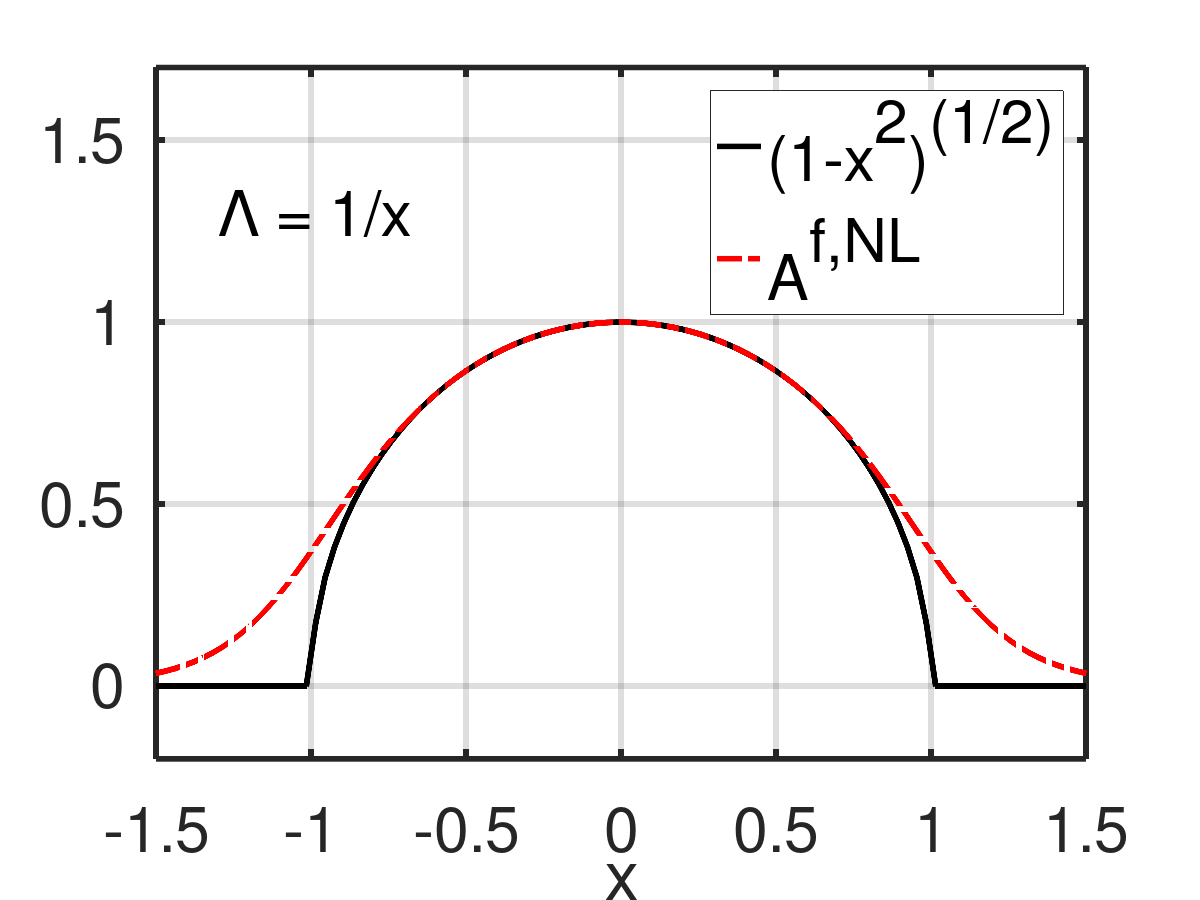}
\par\end{centering}
\caption{Illustrative examples of the nonlinear approximation (\ref{eq:nonLinApp})
with 11 terms.}
\label{pic:nonLin}
\end{figure}
A simple example of a nonlinear delta approximation can be constructed
by matching derivatives of a function $f$ transformed by a different
function $\Lambda$
\[
\mathcal{C}_{n}=\frac{d^{n}}{dx^{n}}\Lambda\left[\ldots\right]|_{x=0},\quad c_{n}^{f}=\frac{d^{n}\Lambda\left[f\left(x\right)\right]}{dx^{n}}|_{x=0}.
\]
We choose by purpose $\Lambda$ to be non-linear so as to imply the
non-linearity of the whole functional. The function $f$ is assumed
to be strictly monotonic on $\left(a,b\right)$ and $\Lambda$ invertible
$\Omega=\Lambda^{-1}$ on its codomain. The approximation is then
constructed as
\begin{equation}
\mathcal{A}^{f,NL}\left(x\right)=\Omega\left(\sum_{n=0}^{\infty}a_{n}x^{n}\right)\text{ with }a_{n}=c_{n}^{f}.\label{eq:nonLinApp}
\end{equation}
The construction is indeed almost trivial, yet it provides us with
a valid example of a non-linear functional. Clearly, the convergence
properties depend on the convergence behavior of the power expansion
which can be analyzed using standard tools known from the Taylor series.
This expansion might be of some interest: Setting by hand $a_{n}=\delta_{n,1}$
one has $\mathcal{A}^{\bcancel{f},NL}\left(x\right)=\Omega\left(x\right)$,
and so, progressively changing (in some way) the coefficients leads
to a progressive change in the function form from $\Omega\left(x\right)$
to $f\left(x\right)$ (assuming the convergence of $\mathcal{A}^{f,NL}$
to $f$)
\[
\left\{ a_{n}=\delta_{n,1}\right\} \rightarrow\left\{ a_{n}=c_{n}^{f}\right\} \Rightarrow\Omega\left(x\right)\rightarrow f\left(x\right).
\]
This can be used in a model comparison: if, in some area of science,
an established model predicting the behavior $y=\Omega\left(x\right)$
is to be improved by a more precise model, it might be natural to
express the prediction of the new model in the form $\Omega\left(\sum_{n=0}^{\infty}a_{n}x^{n}\right)$,
so that the deviations of coefficients from zero (from one for $x_{0}$)
encode the deviation of the new model from the old one.

Some illustrative examples of the approximation based on formula (\ref{eq:nonLinApp})
are shown in Fig. \ref{pic:nonLin}.

\subsection{Variations on Whittaker--Shannon formula\label{subsec:Variations-on-Whittaker=002013Shannon}}

The Whittaker--Shannon (WS) formula
\[
f\left(x\right)=\lim_{t\rightarrow x}\sum_{n=-\infty}^{+\infty}f\left(\frac{n\pi}{T}\right)\frac{\sin\left(Tt-n\pi\right)}{Tt-n\pi},\quad\widetilde{f}\left(\omega\right)=0\text{ for }\left|\omega\right|>T,
\]
where $\widetilde{f}\left(\omega\right)$ denotes the Fourier transform
assumed continuous on $\left[-T,T\right]$, can be generalized to
yield various interpolations and integral-matching approximations
(these can be interpreted as interpolations of the primitive function).
Such approximations share the argument-scaling feature, i.e. the function
is evaluated at different points (or the integral is computed in different
limits). In this section we generalize the WS formula to allow for
various point distributions, including, for example, distributions
on finite intervals concentrated around limit points. We believe this
is interesting because the generalization we propose significantly
increases the number of use cases (i.e. point configurations). We
do not study the effect of the generalization on convergence properties,
with this respect we present only numerical observations.

In full generality one can consider a ratio 
\begin{equation}
\mathcal{A}^{f,WS}\left(x\right)=\lim_{t\rightarrow x}\sum_{n}a_{n}\frac{\lambda_{n}\left(t\right)}{\omega_{n}\left(t\right)}\label{eq:WhittSh_ApxGen}
\end{equation}
where for an infinite set of distinct real numbers $\left\{ x_{n}\right\} _{n=-\infty}^{\infty}$
the following is true
\[
\lambda_{m}\left(x_{n}\right)=0,\quad\omega_{m}\left(x_{n\neq m}\right)\neq0,\quad\lim_{t\rightarrow x_{n}}\frac{\lambda_{n}\left(t\right)}{\omega_{n}\left(t\right)}=1.
\]
The lower index in $\lambda_{n}$ appears for generality reasons but
does not represent a key idea of the construction. A modified version
of the above formula 
\[
\lambda\left(t\right)=\lambda_{0}\left(t\right)=\lambda_{1}\left(t\right)=\lambda_{2}\left(t\right)=\ldots
\]
may be more appealing for its simplicity (denoted later on without
any index). It is straightforward to see that the approximation property
holds
\[
a_{n}=c_{n}^{f}\equiv f\left(x_{n}\right),
\]
because the individual terms in (\ref{eq:WhittSh_ApxGen}) represent
delta functions
\[
\lim_{t\rightarrow x_{m}}\frac{\lambda\left(t\right)}{\omega_{n}\left(t\right)}=\delta_{n,m},
\]
from which a delta approximation is built. There are no many candidates
for $\lambda$ when searching among elementary or commonly used functions:
one naturally considers the trigonometric functions, eventually also
the Bessel functions, and their modifications.

We propose the function form
\begin{equation}
\mathcal{A}_{N}^{f,WS}\left(x\right)\doteq\sum_{n=-N}^{-N}a_{n}\frac{\mathcal{N}\left(x\right)\sin\left[\pi s\left(x\right)\right]}{s_{n}\left(x-x_{n}\right)},\quad s_{n}\equiv\frac{d}{dx}\left\{ \mathcal{N}\left(x\right)\sin\left[\pi s\left(x\right)\right]\right\} |_{x=x_{n}},\label{eq:genWS}
\end{equation}
where we use $\doteq$ to denote the limit when $x\rightarrow x_{n}\equiv s^{-1}\left(n\right)$,
$s\left(x\right)$ is an argument-scaling function invertible on some
non-zero interval $I$, $\left\{ x_{n}\right\} \subset I$ and $\mathcal{N}\left(x\right)$
is a normalization factor. The latter is important mainly in the neighborhood
of limit points, e.g. the function $\sin\left(\pi/x\right)$ behaves
chaotically in the proximity of zero while $x\sin\left(\pi/x\right)$
behaves more nicely and this has an impact also on the corresponding
sums. As illustrations we provide the following examples
\begin{figure}
\begin{centering}
\includegraphics[width=0.48\textwidth]{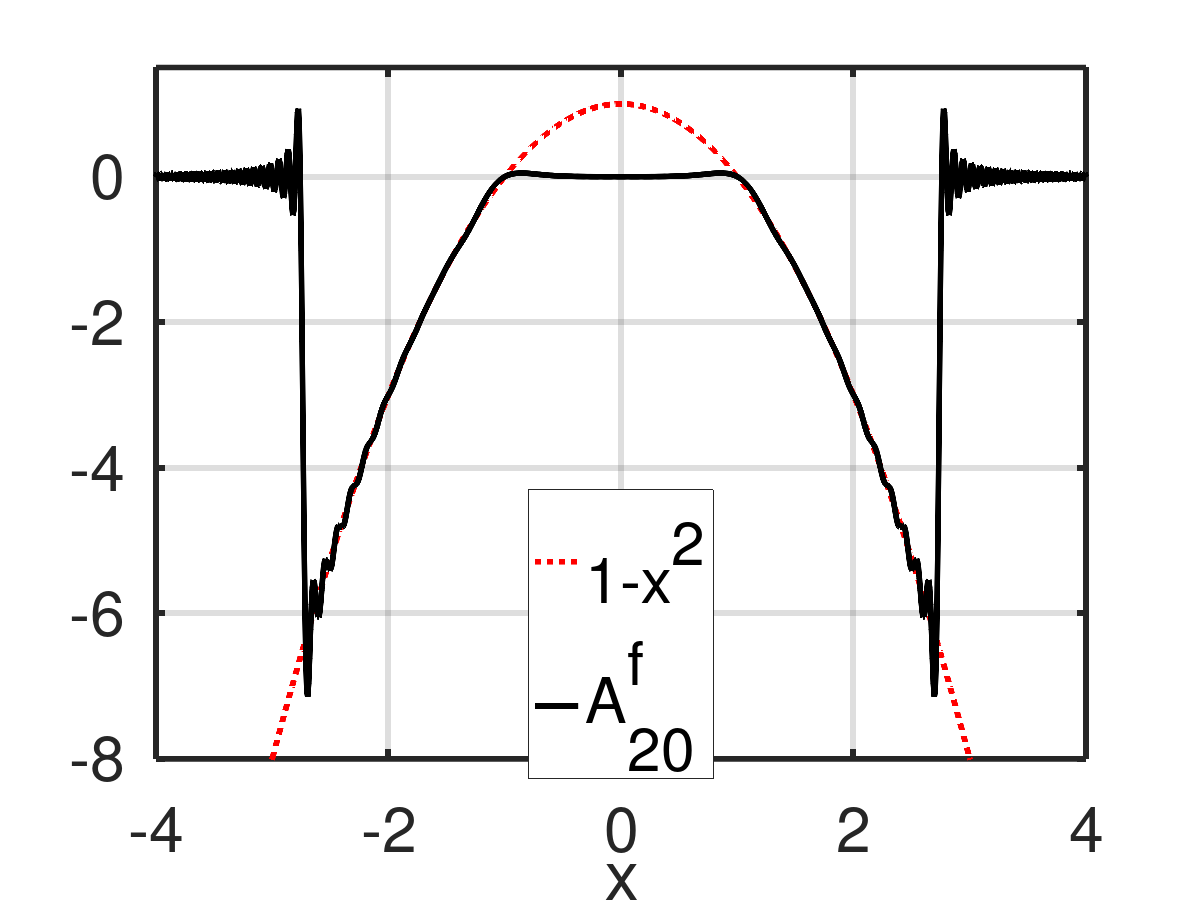}\includegraphics[width=0.48\textwidth]{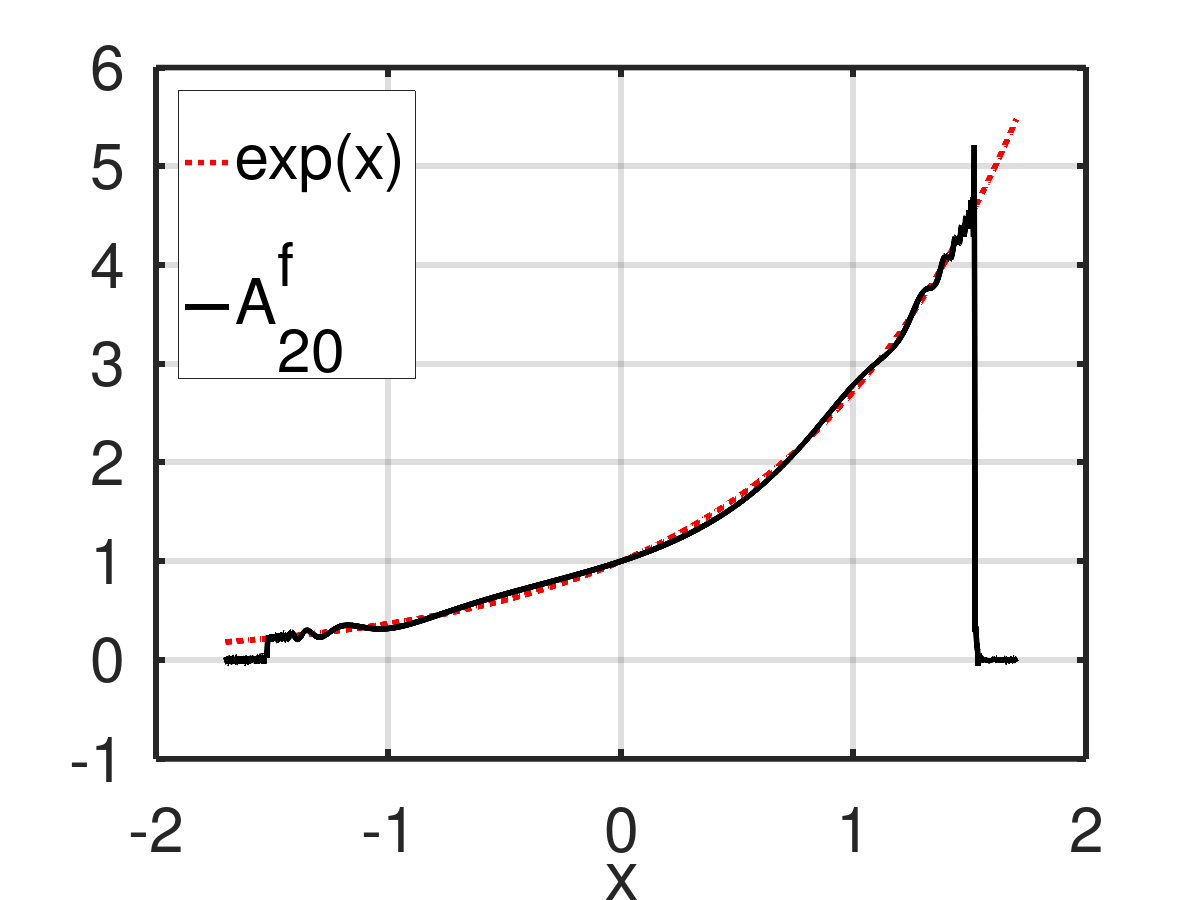}\\
(a)\hspace{0.45\textwidth}(b)\\
\bigskip{}
\par\end{centering}
\begin{centering}
\includegraphics[width=0.48\textwidth]{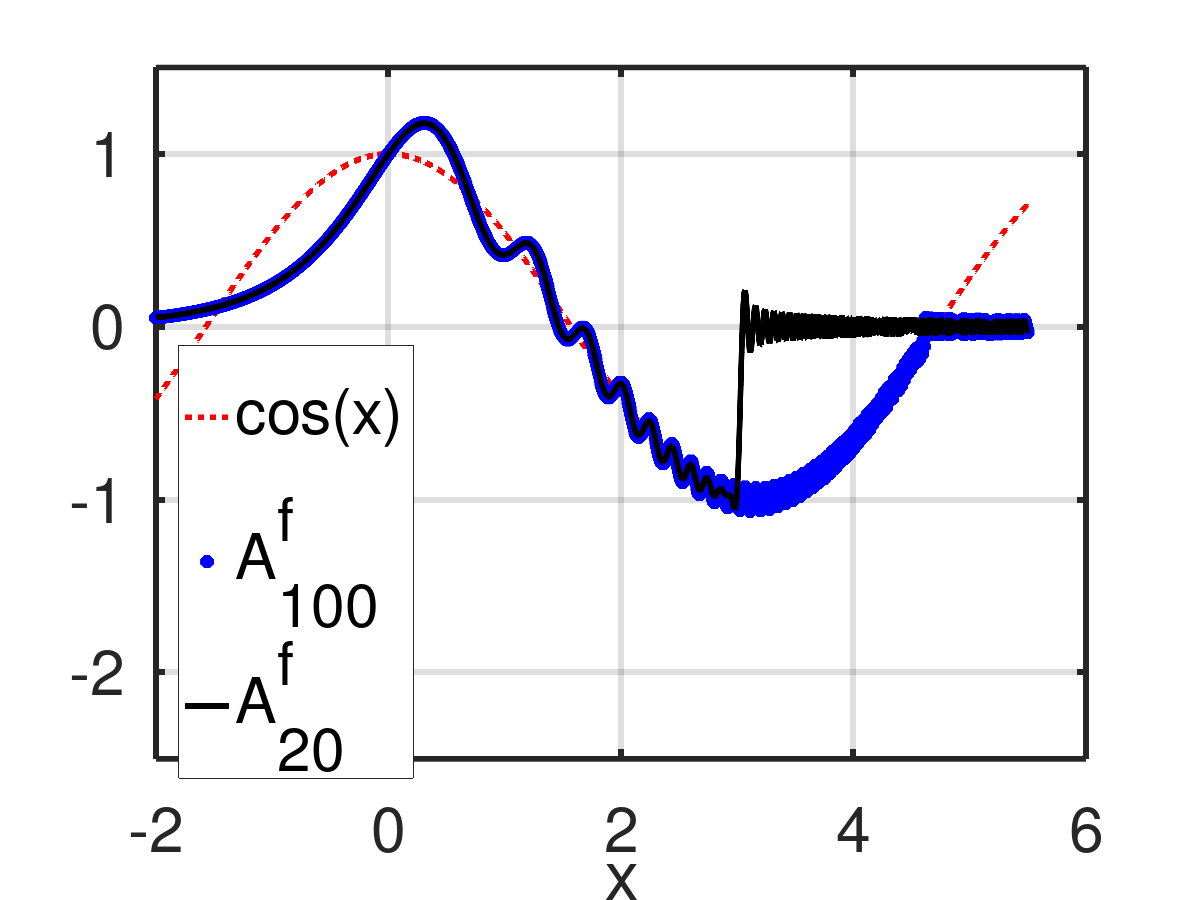}\includegraphics[width=0.48\textwidth]{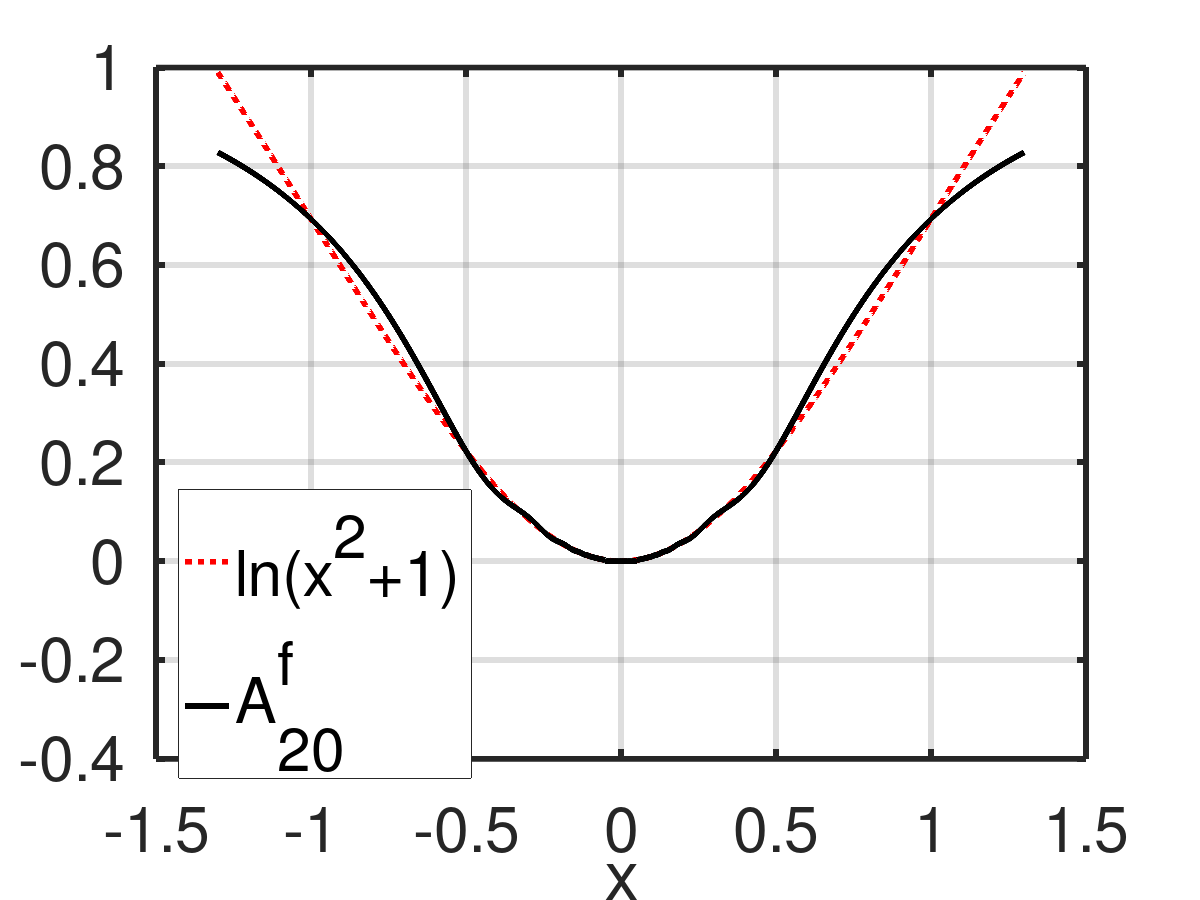}\\
(c)\hspace{0.45\textwidth}(d)\\
\bigskip{}
\par\end{centering}
\begin{centering}
\includegraphics[width=0.48\textwidth]{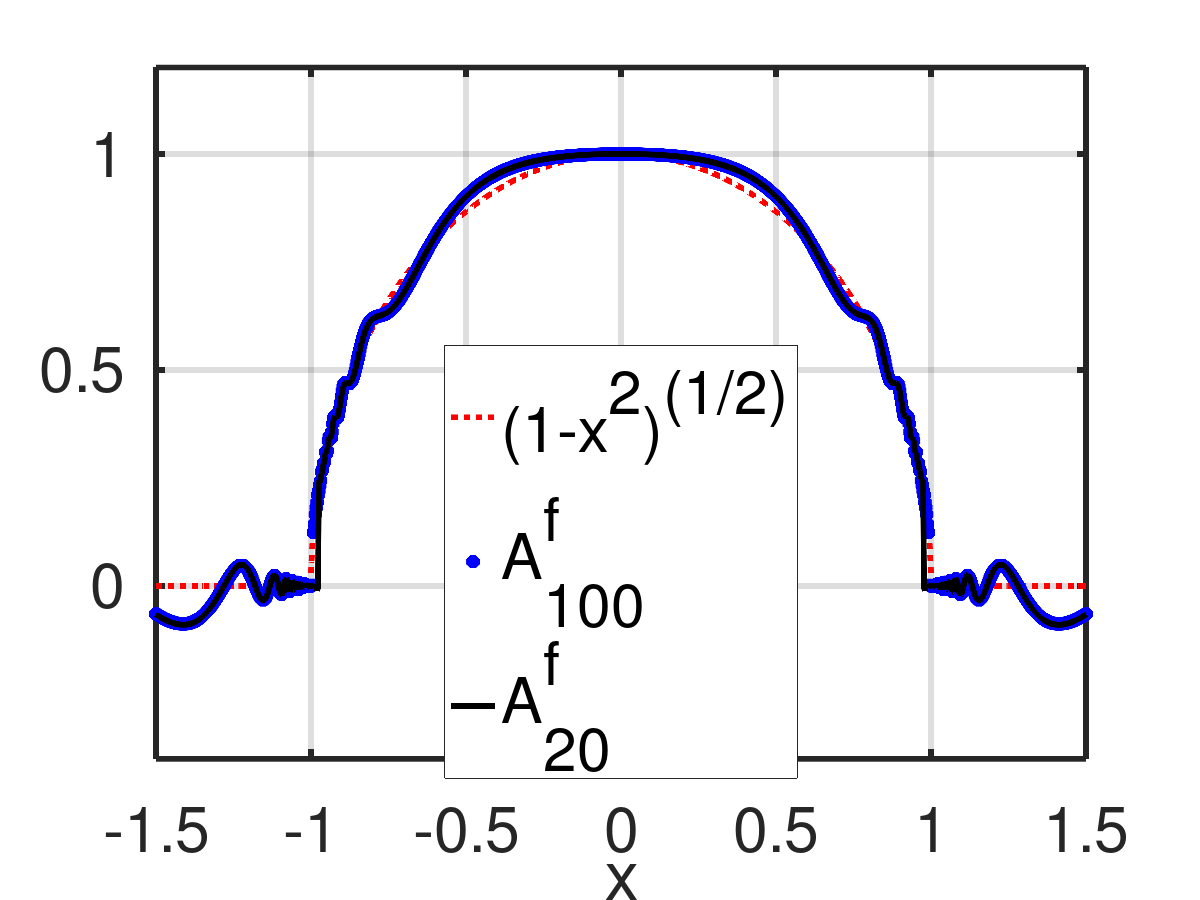}\includegraphics[width=0.48\textwidth]{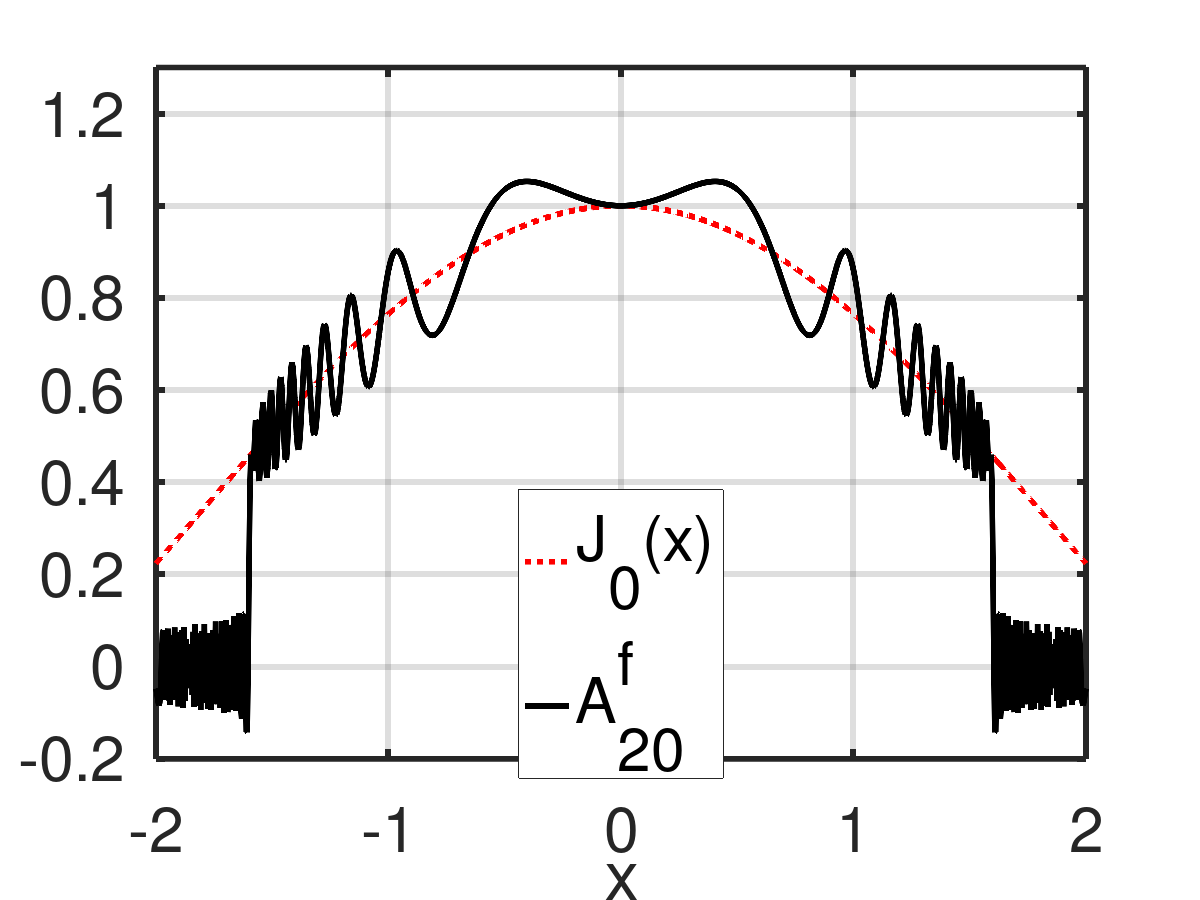}\\
(e)\hspace{0.45\textwidth}(f)
\par\end{centering}
\caption{Approximations based on a modified WS formula (for details see the
text).}
\label{Fig:wsPics}

\end{figure}

\begin{lyxlist}{00.00.0000}
\item [{(a)}] We approximate $f\left(x\right)=1-x^{2}$ using $s(x)=x^{3}$,
$x_{n}=\sqrt[3]{n}$, $s_{n}=3\pi n^{2/3}\left(-1\right)^{n}$, $\mathcal{N}\left(x\right)=1$,
$N=20$. Because the derivative of $s$ at zero is zero, the term
$n=0$ is excluded from the sum\footnote{The denominator in (\ref{eq:genWS}) cannot compensate for the numerator
and provide a finite limit at $x_{0}=0$ since it is only linear.}. The approximation is show in Fig. \ref{Fig:wsPics}(a).
\item [{(b)}] We approximate $f\left(x\right)=e^{x}$ using $s(x)=\tan\left(x\right)$,
$x_{n}=\arctan\left(n\right)$, $s_{n}=\pi\left(n^{2}+1\right)\left(-1\right)^{n}$,
$\mathcal{N}\left(x\right)=1$, $N=20$. The approximation is show
in Fig. \ref{Fig:wsPics}(b).
\item [{(c)}] We approximate $f\left(x\right)=\cos\left(x\right)$ using
$s(x)=e^{x}$, $x_{n}=\ln\left(n\right)$, $s_{n}=\pi n\left(-1\right)^{n}$,
$\mathcal{N}\left(x\right)=1$, $N=20\text{ and }100$. For $x_{n}$
to be well-defined real numbers, only $n>0$ terms are considered
in the sum (\ref{eq:genWS}). The approximation is show in Fig. \ref{Fig:wsPics}(c).
Increasing the number of terms does not seem to help dumping oscillations
and improve the convergence.
\item [{(d)}] We approximate $f\left(x\right)=\ln\left(x^{2}+1\right)$
using $s(x)=1/x$, $x_{n}=1/n$, $s_{n}=\pi\left(-1\right)^{n+1}$,
$\mathcal{N}\left(x\right)=x^{2}$, $N=20$. For $x_{n}$ to be well-defined,
the term $n=0$ is excluded from the sum. The approximation is show
in Fig. \ref{Fig:wsPics}(d).
\item [{(e)}] We approximate $f\left(x\right)=\sqrt{1-x^{2}}$ using $s(x)=x/\left(1-x^{2}\right)$,
$x_{n}=2n/(\sqrt{4n^{2}+1}+1)$, $s_{n}=\pi\left(-1\right)^{n}\sqrt{4n^{2}+1}$,
$\mathcal{N}\left(x\right)=1-x^{2}$, $N=20\text{ and }100$. The
approximation is show in Fig. \ref{Fig:wsPics}(e).
\item [{(f)}] We approximate $f\left(x\right)=J_{0}\left(x\right)$ using
$s(x)=xe^{x^{2}}$, $x_{n}=\text{sgn}\left(n\right)\sqrt{W\left(2n^{2}\right)/2}$,
$\mathcal{N}\left(x\right)=1$, $N=20$, where $J_{0}$ is the Bessel
function, $\text{sgn}$ is the sign function, $W$ is the principal
branch of the Lambert W function and the expression for $s_{n}$ is
not shown (because of its complexity). The approximation is depicted
in Fig. \ref{Fig:wsPics}(f).
\end{lyxlist}
The general observations for the studied cases are
\begin{itemize}
\item Functions $\mathcal{A}_{N}^{f,WS}$ seem to converge (not necessarily
to $f$) and thus approximate $f$ at least to some extent. In the
case (a), the missing interpolation point $x_{0}=0$ situated in between
other points causes the approximation to deviate importantly in its
proximity.
\item It is difficult to assert about the convergence $\mathcal{A}_{N}^{f,WS}\rightarrow f$.
The cases (c) and (e) suggest that (for some functions) a higher approximation
order does not improve the convergence. Increasing $N$ however extends
the range $(x_{n}^{\text{min}},x_{n}^{\text{max}})$ and thus enlarges
the interval on which $\mathcal{A}_{N}^{f,WS}$ provides some (at
least rough) approximation of $f$.
\item In situations where $x_{n}$ points concentrate on the edges of the
$(x_{n}^{\text{min}},x_{n}^{\text{max}})$ interval, a Gibbs-like
phenomenon is observed there.
\end{itemize}
For integral matching the set of characteristic numbers is given by
\[
c_{n}^{f}=\int_{a}^{x_{n}}f\left(x\right)dx.
\]
One first needs to choose appropriate building blocs\footnote{For example $x\sin\left(\pi x\right)/\left[\left(-1\right)^{n}n\pi\left(x-n\right)\right]$
for $a=0$ and $x_{n}=n$.} for the interpolation function $\mathcal{A}^{f,\text{int.}}$ so
as to satisfy 
\[
A^{f,\text{int.}}\left(a\right)=0
\]
and thus ensure
\[
\left[\mathcal{A}^{f,\text{int.}}\left(x\right)\right]_{a}^{x_{n}}\equiv\mathcal{A}^{f,\text{int.}}\left(x_{n}\right)-\mathcal{A}^{f,\text{int.}}\left(a\right)=\mathcal{A}^{f,\text{int.}}\left(x_{n}\right)
\]
Interpreting $A^{f,\text{int.}}$ as the primitive function
\[
\mathcal{A}^{f,\text{int.}}\left(x_{n}\right)=F\left(x_{n}\right)=c_{n}^{f},\quad F'=f,
\]
the integral matching approximation $A^{f,\int}$ of $f$ is written
as 
\[
\mathcal{A}^{f,\int}\left(x\right)=\left[\mathcal{A}^{f,\text{int.}}\left(x\right)\right]'.
\]
The fact that the differentiation often generates oscillation \cite{berry2005universal}
seems to be a drawback of this method. The latter suggests that maybe
an opposite procedure should be performed: differentiate $f$ several
times, interpolate the resulting higher order derivative and then
apply (to the approximation) a repeated integration, which is known
to have (in general) a smoothing effect.

\section{Summary, conclusion, outlook}

The text interprets function approximation in a very general framework
of matching the characteristic numbers given by a (possibly non-linear)
functional action on the approximated function. To our knowledge all
existing approximations fit into this construction, we reviewed some
of them. Further, we proposed several new expansions mostly exploiting
the Taylor-like derivative-matching approach, but we also numerically
investigated some extensions of the Whittaker--Shannon interpolation
formula.

Maybe the most interesting results are the three presumably new rational
expansions (\ref{eq:newPade}), (\ref{eq:powInArg_Rat1}) and (\ref{eq:powInArg_Rat2}),
which posses interesting properties, such as an efficient evaluation
on a computer, an integrability within elementary functions or coefficients
which can be easily calculated order-by-order (unlike for the Padé
approximant). In addition, with the differentiation being linear,
one can consider an approximation by linearly combining them. Unfortunately,
one cannot fully control the positions of singularities since these
are given by construction.

The text also opens the possibility to further investigate new approximations
by focusing on other special cases of the Bell polynomial arguments
(Sec. \ref{subsec:Power-series-of}) or other pairs of mutually Dirichlet-inverse
arithmetic functions (Sec. \ref{subsec:g(x^n)}).

We did not provide detailed motivations to all new expansions. Yet,
several of them are constructed using a general function $g$ and
thus our results represent a large family of possible approximations.
A number of them can be well suited for some specific purposes the
author may not be aware of, yet we have many examples of mathematical
methods whose usefulness was seen only after their development.

In the present article we focused almost entirely on the approximation
property as we understand it, and did not make conclusions concerning
the convergence, unless such conclusions could be simply related to
known cases (the Taylor series). The convergence issues being usually
technically difficult, the text can be understood as a starting point
for studying them (for the various new expansions we proposed) in
the future.

\section*{Acknowledgments}

The work was supported by VEGA grant No. 2/0105/21.

\bibliographystyle{unsrt}
\bibliography{genExpansions}

\begin{thebibliography}{10}

\bibitem{Widder1928}
D.~V. Widder.
\newblock A generalization of taylor's series.
\newblock {\em Trans. Am. Math. Soc}, 30(1):126--154, 1928.

\bibitem{davis1975interpolation}
P.J. Davis.
\newblock {\em Interpolation and Approximation}.
\newblock Dover Books on Mathematics. Dover Publications, 1975.

\bibitem{Iran1_2008JCoAM.216..307M}
Mohammad {Masjed-Jamei}.
\newblock {On constructing new interpolation formulas using linear operators
  and an operator type of quadrature rules}.
\newblock {\em Journal of Computational and Applied Mathematics},
  216(2):307--318, July 2008.

\bibitem{Iran2_article}
Mohammad Masjed-Jamei.
\newblock On constructing new expansions of functions using linear operators.
\newblock {\em Journal of Computational and Applied Mathematics}, 234:365--374,
  05 2010.

\bibitem{Iran3_article}
Mohammad Masjed-Jamei, Zahra Moalemi, Ivan Area, and Juan Nieto.
\newblock A new type of taylor series expansion.
\newblock {\em Journal of Inequalities and Applications}, 2018, 05 2018.

\bibitem{NeumannBook}
C.~G. Neumann.
\newblock {\em Die theorie der Besselschen funktionen}.
\newblock B.G. Teubner Verlag, Leipzig, 1867.

\bibitem{WatsonBook}
G.~Watson.
\newblock {\em Treatise on the Theory of Bessel Functions}.
\newblock Cambridge University Press, 1922.

\bibitem{usedBessel1}
Vladislav~V. Kravchenko, Sergii~M. Torba, and Ra\'{u}l Castillo-P\'{e}rez.
\newblock A neumann series of bessel functions representation for solutions of
  perturbed bessel equations.
\newblock {\em Applicable Analysis}, 97(5):677--704, 2018.

\bibitem{usedBessel2}
Vladislav~V. Kravchenko, Luis~J. Navarro, and Sergii~M. Torba.
\newblock Representation of solutions to the one-dimensional schr\"{o}dinger
  equation in terms of neumann series of bessel functions.
\newblock {\em Applied Mathematics and Computation}, 314:173--192, 2017.

\bibitem{baker1996pade}
George~A Baker, George~A Baker~Jr, George Baker, Peter Graves-Morris, and
  Susan~S Baker.
\newblock {\em Pade Approximants: Encyclopedia of Mathematics and It's
  Applications, Vol. 59 George A. Baker, Jr., Peter Graves-Morris}, volume~59.
\newblock Cambridge University Press, 1996.

\bibitem{padeNum}
Hiroaki~S. Yamada and Kensuke~S. Ikeda.
\newblock A numerical test of pade approximation for some functions with
  singularity.
\newblock https://arxiv.org/abs/1308.4453, 2013.

\bibitem{padeBranch}
Alexander~I. Aptekarev and Maxim~L. Yattselev.
\newblock Pad\'{e} approximants for functions with branch points - strong
  asymptotics of nuttall-stahl polynomials.
\newblock {\em Acta Mathematica}, 215(2):217 -- 280, 2015.

\bibitem{Butzer}
P.~L. Butzer, K.~Schmidt, E.L. Stark, and L.~Vogt.
\newblock Central factorial numbers; their main properties and some
  applications.
\newblock {\em Numerical Functional Analysis and Optimization},
  10(5-6):419--488, 1989.

\bibitem{tjant2019725}
Feng Qi, Guo-Sheng Wu, and Bai-Ni Guo.
\newblock An alternative proof of a closed formula for central factorial
  numbers of the second kind.
\newblock {\em Turkish Journal of Analysis and Number Theory}, 7(2):56--58,
  2019.

\bibitem{alma991004750549703276}
Konrad Schm\"{u}dgen.
\newblock {\em The Moment problem}.
\newblock Graduate texts in mathematics, 277. Springer International, Cham,
  Switzerland, 2017 - 2017.

\bibitem{ASKEY1982237}
R~Askey, I.J Schoenberg, and A~Sharma.
\newblock Hausdorff's moment problem and expansions in legendre polynomials.
\newblock {\em Journal of Mathematical Analysis and Applications},
  86(1):237--245, 1982.

\bibitem{Talenti_1987}
G~Talenti.
\newblock Recovering a function from a finite number of moments.
\newblock {\em Inverse Problems}, 3(3):501--517, aug 1987.

\bibitem{2017CMMPh..57..175S}
A.~O. {Savchenko}.
\newblock {Matrix of moments of the Legendre polynomials and its application to
  problems of electrostatics}.
\newblock {\em Computational Mathematics and Mathematical Physics},
  57(1):175--187, January 2017.

\bibitem{10.1145/3331167}
Parminder Kaur, Husanbir~Singh Pannu, and Avleen~Kaur Malhi.
\newblock Comprehensive study of continuous orthogonal moments - a systematic
  review.
\newblock {\em ACM Comput. Surv.}, 52(4), August 2019.

\bibitem{krylov1962}
V.~I. Krylov.
\newblock {\em Approximate calculation of integrals}.
\newblock Macmillan, New York, 1962.

\bibitem{jordan1965}
C.~Jordan.
\newblock {\em Calculus of finite differences}.
\newblock Chelsea, New York, 1965.

\bibitem{MORDELL1966132}
L.J Mordell.
\newblock Expansion of a function in a series of bernoulli polynomials, and
  some other polynomials.
\newblock {\em Journal of Mathematical Analysis and Applications},
  15(1):132--140, 1966.

\bibitem{Filomat}
Mohammad Masjed-Jamei, Gradimir~V. Milovanovi\'{c}, and Z.~Moalemi.
\newblock A generalization of divided differences and applications.
\newblock {\em Filomat}, 33:193--210, 2019.

\bibitem{zbMATH02616893}
G.~{Pick}.
\newblock {\"Uber die Beschr\"ankungen analytischer Funktionen, welche durch
  vorgegebene Funktionswerte bewirkt werden}.
\newblock {\em {Math. Ann.}}, 77:7--23, 1915.

\bibitem{SahinAdem}
Adem Sahin.
\newblock Inverse and factorization of triangular toeplitz matrices.
\newblock {\em Miskolc Mathematical Notes}, 19:527, 01 2018.

\bibitem{QI2020124382}
Feng Qi, Da-Wei Niu, Dongkyu Lim, and Yong-Hong Yao.
\newblock Special values of the bell polynomials of the second kind for some
  sequences and functions.
\newblock {\em Journal of Mathematical Analysis and Applications},
  491(2):124382, 2020.

\bibitem{articleBelle2017}
Feng Qi and Bai-Ni Guo.
\newblock Explicit formulas for special values of the bell polynomials of the
  second kind and for the euler numbers and polynomials.
\newblock {\em Mediterranean Journal of Mathematics}, 14:14 pages, 05 2017.

\bibitem{WANG2009104}
Weiping Wang and Tianming Wang.
\newblock General identities on bell polynomials.
\newblock {\em Computers \& Mathematics with Applications}, 58(1):104--118,
  2009.

\bibitem{howard1980special}
F.~T. Howard.
\newblock A special class of bell polynomials.
\newblock {\em Mathematics of Computation}, 35:977--989, 1980.

\bibitem{nemes_2021}
Gerg\"{o} Nemes.
\newblock Dirichlet inverse for
  $\left\{1,0,-1,0,1,0,-1,0,1,0,-1,0,\ldots\right\}$.
\newblock
  https://math.stackexchange.com/questions/4306673/dirichlet-inverse-for-left-1-0-1-0-1-0-1-0-1-0-1-0-ldots-right,
  Nov 2021.
\newblock Posted as user "Gary".

\bibitem{berry2005universal}
MV~Berry.
\newblock Universal oscillations of high derivatives.
\newblock {\em Proceedings of the Royal Society A: Mathematical, Physical and
  Engineering Sciences}, 461(2058):1735--1751, 2005.

\end{thebibliography}

\end{document}